\documentclass{amsart}

\topmargin=0in
\oddsidemargin=0.25in
\evensidemargin=0.25in
\textwidth=6in
\textheight=8.5in

\usepackage{amsmath,amssymb,mathrsfs,amscd,amsthm}
\usepackage{braket}
\usepackage[dvips]{graphicx} 
\usepackage{psfrag} 
\usepackage{mathtools} 
\usepackage{esint} 
\usepackage{nccbbb} 
\usepackage[utf8]{inputenc} 
\usepackage{hyperref}
\usepackage{tikz}

\newtheorem{theorem}{Theorem}[section]
\newtheorem{lemma}[theorem]{Lemma}
\newtheorem{proposition}[theorem]{Proposition}
\newtheorem{corollary}[theorem]{Corollary}
\newtheorem{definition}[theorem]{Definition}
\theoremstyle{definition}
\newtheorem{remark}[theorem]{Remark}

\numberwithin{equation}{section}
\allowdisplaybreaks


\newcommand{\BK}[1]{ {\left( #1 \right)} }
\newcommand{\sqBK}[1]{ {\left[ #1 \right]} }
\newcommand{\curBK}[1]{ {\left\{ #1 \right\}} }
\newcommand{\absBK}[1]{ {\left| #1 \right|} }
\newcommand{\VertBK}[1]{ {\Vert #1 \Vert} }
\newcommand{\angleBK}[1]{ {\left < #1 \right >} }
\newcommand{\ceilBK}[1]{ {\lceil #1 \rceil} }
\newcommand{\floorBK}[1]{ {\lfloor #1 \rfloor} }
\newcommand{\dotBK}{ {(\,\cdot\,)} }


\usepackage{acronym}

\acrodef{KPZ}[\textsc{kpz}]{Kardar--Parisi--Zhang}
\acrodef{SHE}[\textsc{she}]{stochastic heat equation}
\acrodef{dSHE}[\textsc{dshe}]{discrete stochastic heat equation}


\newcommand{\Omegali}{ \Omega^\text{lin} }
\newcommand{\Omegaqd}{ \Omega^\text{qd} }


\newcommand{\E}{\varepsilon}

\newcommand{\lambdaE}{{\lambda\E^{1/2}}}
\newcommand{\nuE}{\nu_\varepsilon}
\newcommand{\gammaE}{\gamma^\varepsilon}
\newcommand{\rhoE}{\rho^\varepsilon}
\newcommand{\sigmaE}{\sigma^\varepsilon}
\newcommand{\phiE}{\phi^\varepsilon}
\newcommand{\psiE}{\psi^\varepsilon}

\newcommand{\cE}{c^\varepsilon}
\newcommand{\fE}{f^\varepsilon}
\newcommand{\gE}{g^\varepsilon}
\newcommand{\qE}{q^\varepsilon}
\newcommand{\alphaE}{\alpha^\varepsilon}

\newcommand{\DE}{D^\varepsilon}
\newcommand{\FE}{F^\varepsilon}
\newcommand{\HE}{H^\varepsilon}
\newcommand{\KE}{K^\varepsilon}
\newcommand{\LE}{L^\varepsilon}
\newcommand{\ME}{M^\varepsilon}
\newcommand{\NE}{N^\varepsilon}
\newcommand{\RE}{R^\varepsilon}
\newcommand{\UE}{U^\varepsilon}
\newcommand{\UEd}{U^{\varepsilon,\delta}}
\newcommand{\ZE}{Z^\varepsilon}
\newcommand{\LambdaE}{\Lambda^\varepsilon}
\newcommand{\OmegaE}{\Omega^\varepsilon}

\newcommand{\calZE}{\mathcal{Z}^\varepsilon}
\newcommand{\scrZE}{\mathscr{Z}^\varepsilon}

\newcommand{\tilr}{\widetilde{r}}
\newcommand{\tilf}{\widetilde{f}}
\newcommand{\tilg}{\widetilde{g}}
\newcommand{\tilcalC}{\widetilde{\mathcal{C}}}

\newcommand{\tilD}{\widetilde{D}}
\newcommand{\tilS}{\widetilde{S}}
\newcommand{\tilT}{\widetilde{T}}

\newcommand{\tilDelta}{\widetilde{\Delta}}
\newcommand{\tilPsi}{\widetilde{\Psi}}

\newcommand{\tilrE}{{\widetilde{r}^\varepsilon}}

\newcommand{\tilNE}{\widetilde{N}^\varepsilon}
\newcommand{\tilLambdaE}{\widetilde{\Lambda}^\varepsilon}
\newcommand{\tilUEd}{\widetilde{U}^{\varepsilon,\delta}}

\newcommand{\fNTE}{f^{N_\varepsilon}_{T,\varepsilon}}
\newcommand{\barflTE}{ \bar{f}^{l}_{T,\varepsilon}}

\newcommand{\fntE}{f^{n}_{t,\varepsilon}}
\newcommand{\fnmtE}{f^{n+m}_{t,\varepsilon}}

\newcommand{\deltaE}{{\delta\varepsilon^{-\frac12}}}
\newcommand{\sE}{{s,\varepsilon}}
\newcommand{\tE}{{t,\varepsilon}}
\newcommand{\TE}{{T,\varepsilon}}

\newcommand{\barf}{\bar f}

\newcommand{\bari}{{\bar i}}
\newcommand{\barj}{{\bar j}}
\newcommand{\bark}{{\bar k}}


\newcommand{\bbC}{ \mathbb{C} }
\newcommand{\bbE}{ \mathbb{E} }
\newcommand{\bbP}{ \mathbb{P} }
\newcommand{\bbN}{ \mathbb{N} }
\newcommand{\bbR}{ \mathbb{R} }
\newcommand{\bbZ}{ \mathbb{Z} }

\newcommand{\calA}{ \mathcal{A} }
\newcommand{\calB}{ \mathcal{B} }
\newcommand{\calC}{ \mathcal{C} }
\newcommand{\calD}{ \mathcal{D} }
\newcommand{\calE}{ \mathcal{E} }

\newcommand{\calG}{ \mathcal{G} }
\newcommand{\calI}{ \mathcal{I} }
\newcommand{\calL}{ \mathcal{L} }
\newcommand{\calQ}{ \mathcal{Q} }
\newcommand{\calT}{ \mathcal{T} }
\newcommand{\calZ}{ \mathcal{Z} }

\newcommand{\scrG}{ \mathscr{G} }
\newcommand{\scrF}{ \mathscr{F} }

\newcommand{\bfL}{ \mathbf{L} }
\newcommand{\bfp}{ \mathbf{p}^\varepsilon }

\newcommand{\fkT}{ \mathfrak{T} }
\newcommand{\fkf}{ \mathfrak{f} }


\DeclareMathOperator{\sign}{sign}
\DeclareMathOperator*{\avsum}{\mathrlap{\hspace{.2ex}\rotatebox[origin=c]{20}{\bf---}}{\sum}}
\DeclareMathOperator*{\avvsum}{\mathrlap{\hspace{.2ex}\rotatebox[origin=c]{20}{\bf--}}{\sum}}


\newcommand{\halfZ}{\mathbb{L}}
\newcommand{\Lap}{\bar\Delta}
\newcommand{\usZ}{\widehat{\calZ}}

\newcommand{\Dirilinf}{D^{\infty}_{l}}

\begin{document}

\title[Weakly Asymmetric Non-simple Exclusion Process]
{Weakly Asymmetric Non-simple Exclusion Process and the Kardar--Parisi--Zhang equation}

\author[A.\ Dembo]{$^*$Amir Dembo}
\address{$^{*\dagger}$Department of Mathematics, Stanford University, California 94305}
\author[L.-C.\ Tsai]{$^\dagger$Li-Cheng Tsai}

\date{\today}

\subjclass[2010]{Primary 60K35; Secondary 60H15, 82C22}

\keywords{Kardar-Parisi-Zhang equation, 
stochastic heat equation, 
universality,
asymmetric exclusion process.}

\thanks{$^*$ Research partially supported by NSF grant DMS-1106627.}

\begin{abstract}
We analyze a class of non-simple exclusion processes
and the corresponding growth models
by generalizing G\"{a}rtner's discrete Cole--Hopf transformation.
We identify the main non-linearity and eliminate it 
by imposing a gradient type condition.
For hopping range at most $3$, using the generalized transformation,
we prove the convergence of the exclusion process toward the \ac{KPZ} equation.
This is the first \emph{universality} result
concerning interacting particle systems in the context of \ac{KPZ} universality class.
While this class of exclusion processes are not explicitly solvable,
we obtain the exact one-point limiting distribution
for the step initial condition by using the previous result of \cite{amir11} and our convergence result.
\end{abstract}

\maketitle

\section{Introduction}
\label{sec:intro}

In this paper we study the asymptotic behavior 
of weakly asymmetric exclusion processes 
with finite hopping ranges.
A general exclusion process 
is an interacting particle system on 
the half integer lattice $\frac12+\bbZ:=\halfZ$ 
under the constraint that each site holds at most one particle,
\cite{liggett05}.
Let $m\in\bbN$ be the hopping range of the exclusion process,
let $q_k>0$, $k=\pm1,\ldots,\pm m$, be the hopping rate,
satisfying $\sum_{k=1}^m(q_k+q_{-k})=1$,
and let
\begin{equation}\label{eq:Possion}
\curBK{ Q^k_t(y): k=\pm 1,\ldots,\pm m, ~ y\in\halfZ }
\end{equation}
be mutually independent (in $k$ and $y$) Poisson clocks, 
with each $Q^k_t(y)$ having rate $q_k$.
When the clock $Q^k_t(y)$ rings, 
the particle at $y$ (if exists)
hops to $y+k$ if this destination is unoccupied,
otherwise it stays at $y$.

We associate a growth model with an exclusion process.
Let $\eta_t(y)$ be the occupation variable 
of the exclusion process (in the spin convention) 
at position $y\in\halfZ$ and time $t\in[0,\infty)$
\begin{equation}\label{eq:spin:convention}
	\eta_t(y) 
:=
	\left\{\begin{array}{c@{,}l}
		1	& 	\text{ when the site } y \text{ is occupied at time } t,	\\
		-1	&	\text{ otherwise,}
	\end{array}\right.
\end{equation}
and let $h_t(x)$ be the accumulated flux of particles at $x\in\bbZ$.
More precisely,
$h_t(0)$ is the net flow of particles through 
$x=0$ during the time interval $[0,t]$,
counting \emph{left} going particles as positive,
and  
\begin{align}
	\label{eq:h:defn}
	h_t(x) &:= 
	h_t(0) + 
	\left\{\begin{array}{c@{,}l}
		\sum_{0<y<x} \eta_t(y)	& \text{ when } x>0,\\
		-\sum_{x<y<0} \eta_t(y)	& \text{ when } x<0. 
	\end{array}\right.
\end{align}
Note that the discrete gradient of $h_t$ yields $\eta_t$,
that is,
\begin{align*}
	h_t \BK{y+\frac12} - h_t\BK{y-\frac12} 
	= \eta_t(y) \in \curBK{\pm 1},
\end{align*}
and therefore $h_t\dotBK$ represents the height of 
a surface consisting of broken lines with slope $\pm 1$,
whose evolution is described by following growth model.
For $k>0$, when the clock $Q^k_t(y)$ rings,
if the hop from $y$ to $y+k$ is allowed,
that is $(\eta_t(y),\eta_t(y+k))=(1,-1)$,
decrease $h_t(x)$ by 2 for all $x\in(y,y+k)\cap\bbZ$,
otherwise do nothing.
For $k<0$, when the clock $Q^k_t(y)$ rings,
if the hop from $y$ to $y+k$ is allowed,
increase $h_t(x)$ by 2 for all $x\in(y+k,y)\cap\bbZ$,
otherwise do nothing.
See Figure \ref{fig:multiple:corner:growth}.
For $m=1$ this is the simple exclusion process
and the corresponding corner growth model.

\begin{figure}
\centering
\psfrag{A}[c]{$\scriptstyle-\frac{11}{2}$}
\psfrag{B}[c]{$\scriptstyle-\frac{9}{2}$}
\psfrag{C}[c]{$\scriptstyle-\frac{7}{2}$}
\psfrag{D}[c]{$\scriptstyle-\frac{5}{2}$}
\psfrag{E}[c]{$\scriptstyle-\frac{3}{2}$}
\psfrag{F}[c]{$\scriptstyle-\frac{1}{2}$}
\psfrag{G}[c]{$\scriptstyle\frac{1}{2}$}
\psfrag{H}[c]{$\scriptstyle\frac{3}{2}$}
\psfrag{I}[c]{$\scriptstyle\frac{5}{2}$}
\psfrag{J}[c]{$\scriptstyle\frac{7}{2}$}
\psfrag{K}[c]{$\scriptstyle\frac{9}{2}$}
\psfrag{L}[c]{$\scriptstyle\frac{11}{2}$}
\includegraphics[width=0.7\textwidth]{{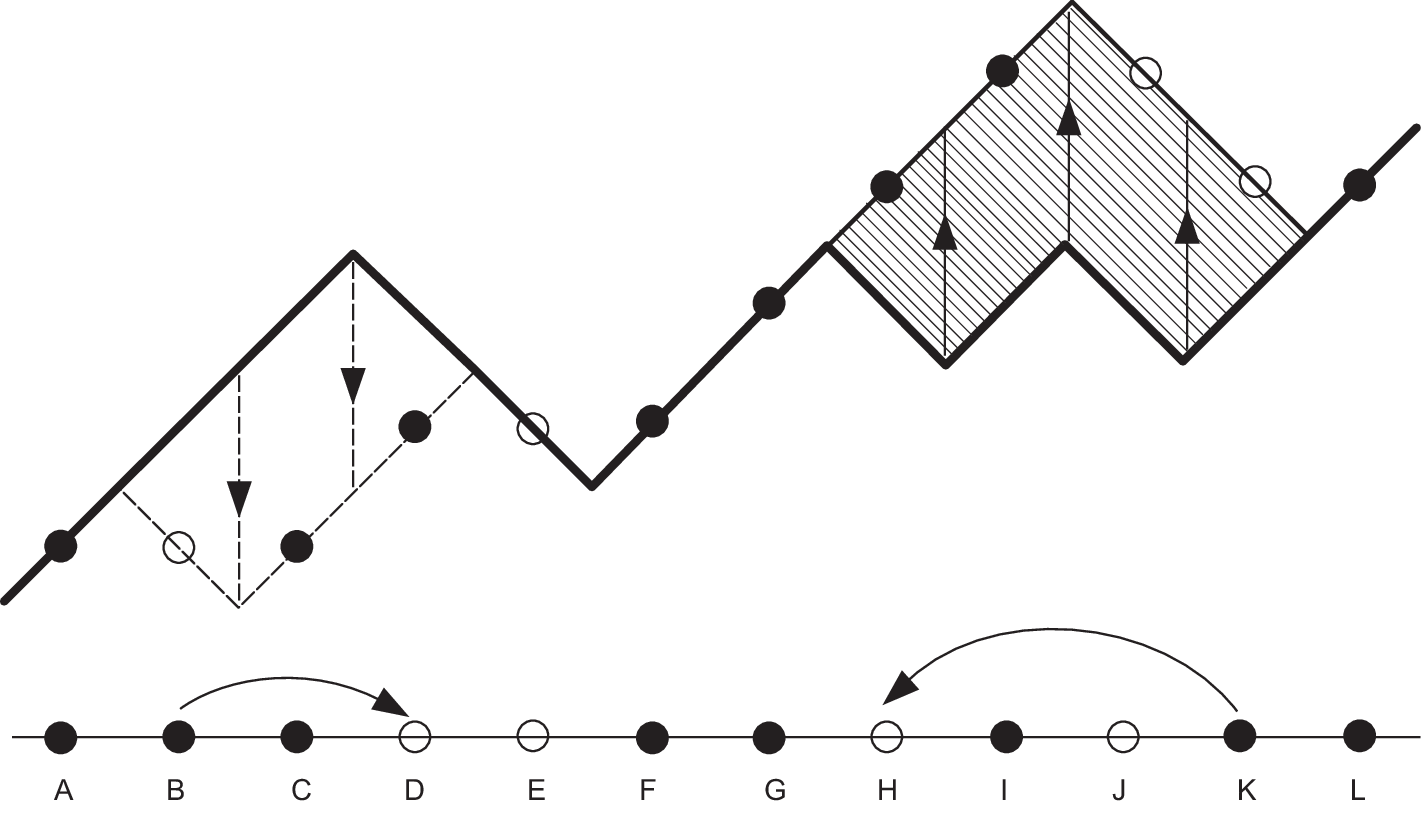}}
\caption{Non-simple exclusion process.
	The solid dots represent the particles,
	the hollow dots represent the empty sites,
	and the thick solid line represents the height $h_t\dotBK$.
	The horizontal arrows represent possible hops of the particles.
	The hop of two steps to the right decreases the height by 2 
	at the sites indicated by the dash lines,
	and the hop of three steps to the left increases the height by 2
	at the sites indicated by the shaded region. 
	}
\label{fig:multiple:corner:growth}
\end{figure}

We are interested in weakly asymmetric exclusion processes,
whereby the hopping rates depend on $\E$ in such a way that
\begin{equation}\label{eq:hop:rate:defn}
	\qE_k = \frac12 r_k \BK{ 1 - \gammaE_k \varepsilon^\frac12 },
\quad
	\qE_{-k} = \frac12 r_k \BK{ 1 + \gammaE_k \varepsilon^\frac12 },
\quad
	k=1,\ldots,m,
\end{equation} 
where $\E\to 0$ is a scaling parameter,
$r_1,\ldots,r_m$ are fixed numbers satisfying
\begin{equation}\label{eq:r:constraint}
	r_k > 0, \quad r_1+\ldots+r_m =1,
\end{equation}
and $\gammaE_k$ may depend on $\E$ 
in such a way that 
the limit $\lim_{\E\to 0}\gammaE_k$ exists and is finite,
for $k=1,\ldots,m$. 
For such exclusion processes, 
under the diffusive scaling 
\begin{align}\label{eq:diff:scaling}
	X=\E x, T=\E^2 t, \E\to 0
\end{align}
of space and time,
we expect the asymptotic fluctuation of $\E^\frac12 h_t$
to converge to a non-degenerated limit.
Indeed, Bertini and Giacomin \cite{bertini97} show that 
for simple ($m=1$) exclusion,
by choosing 
\begin{align}\label{eq:Gartner:para}
	r_1=1, 
\quad
	\gammaE_1 = 1 , 
\quad
	\nuE = \E^{-1} \BK{ 1 - (4\qE_1\qE_{-1})^\frac12 },
\end{align}
the scaled height function
\begin{align*}
	\HE_T(X) := \E^\frac12 h_{\E^{-2}T}(\E^{-1}X) - \nuE \E^{-1} T
\end{align*}
converges (as $\E\to 0$) to the solution of the \ac{KPZ} equation
\begin{equation}\label{eq:KPZ}
	\partial_T H
=
	\frac12 \partial^2_X H - \frac12 \BK{ \partial_X H }^2 + W,
	\quad
	H = H_T(X),
\end{equation}
where $W$ is the spacetime white noise:
$\bbE(W_T(X) W_S(X'))=\delta(T-S) \delta(X-X')$.
While one expect such a result to hold universally for a large collection of models,
the approach of \cite{bertini97} is restricted to the simple exclusion process.
In this paper,
we take the first step toward universality results by 
showing that the same convergence holds 
for non-simple exclusion processes
satisfying $m\leq 3$ and 
\begin{align}\label{eq:gamma:approx}
	\gammaE_k - \lambda 
	\BK{ \frac{2}{r_k} \sum_{k'=k+1}^m \frac{k'-k}{k} r_{k'} +1 } 
	= O(\E).
\end{align}
Hereafter $O(\E)$ stands for a generic function
satisfying $\sup_{\E\in(0,1)} |O(\E)\E^{-1}| < \infty$.

While the \ac{KPZ} equation \eqref{eq:KPZ} is a paradigm of 
equations describing randomly growing surfaces,
introduced by Kardar, Parisi, and Zhang \cite{kardar86},
it is mathematically ill-defined.
Indeed, generic solutions to stochastic differential equations
driven by the white noise exhibit non-differentiability.
Hence, $(\partial_X H)^2$ alone does not have a mathematical meaning.
Rather, the correct mathematical interpretation is the Cole--Hopf transformation to the \ac{SHE}
\begin{align}
	\label{eq:HCtrans}
	H_T(X) &= - \log Z_T(X),
\\
	\label{eq:SHE}
	\partial_T \calZ &= \frac12 \partial_X^2 \calZ + \calZ W.
\end{align}
Since the \ac{SHE} is linear, traditional stochastic calculus applies.
Further, formally $H_T(X)$ of \eqref{eq:HCtrans}
satisfies \eqref{eq:KPZ}, 
so we \emph{define} \eqref{eq:KPZ} by \eqref{eq:HCtrans} and \eqref{eq:SHE}.
This Cole--Hopf transformation dates back to \cite{huse85,kardar86}.
A more comprehensive approach of defining 
solutions to \eqref{eq:KPZ} can be found in \cite{hairer11}.
See \cite{assing11,goncalves10}
for recent development in defining \eqref{eq:KPZ}. 

Employing the non-rigorous renormalization argument of \cite{forster77},
Kardar, Parisi, and Zhang show that the scaling exponents of 
the fluctuation of $H$, space $X$, and time $T$ 
follow a $1:2:3$ ratio, 
signifying a new universality class---the \ac{KPZ} universality class.
This universality class describes 
various phenomena including paper wetting, 
crack formation, and burning fronts.
See \cite[Section 1.1.2]{corwin12} and the references therein.
Moreover, it connects various models describing other phenomenon
including directed last passage percolation, 
directed polymer in a random media, 
and polynuclear growth.
Recently, there has been intensive mathematical research
on instances of explicitly solvable models in this universality class
\cite{borodin11,borodin12,johansson00,johansson03,praehofer02}.
See also \cite{corwin12} and the references therein. 
They all confirm the $1:2:3$ scaling exponents
and have limiting one-point statistics related to random matrix theory,
for example the GUE or GOE Tracy--Widom distribution.
In this paper we provide 
the first instance of a collection of non-explicitly-solvable models
belonging the same universality class.

Specifically, 
in view of the Cole--Hopf transformation \eqref{eq:HCtrans},
for any fixed $\lambda >0$, define 
\begin{align}\label{eq:Z:defn}
	\ZE_t(x) := \exp \BK{ -\lambda \E^\frac12 h_t(x) + \E\nuE t },
\end{align}
for some deterministic $\nuE$ specified by \eqref{eq:nu:defn}.
We show here that under 
the diffusive scaling \eqref{eq:diff:scaling},
$\ZE_\cdot\dotBK$ converges to the mild solution of the \ac{SHE} \eqref{eq:SHE}.
Precisely,
we say a process $Z_\cdot\dotBK$ is a mild solution to the \ac{SHE} 
starting from the initial condition $Z_0\dotBK$ if
\begin{equation}\label{eq:SHE:mild}
	 Z_T(X)
=
	\int_\bbR P_{T}(X-X') Z_0(X') dX' 
	+ 
	\int_0^T \int_\bbR  
	 P_{T-S}(X-X') Z_S(X') W(dX'dS),
\end{equation}
where $P_T(X):=(2\pi T)^{-1/2}\exp(-X^2/2T)$ is the heat kernel,
and the stochastic integral is in the It\^o sense.
For the existence, uniqueness, continuity, and positivity of solutions 
to \eqref{eq:SHE:mild}, see \cite[Proposition 2.5]{corwin12}
and \cite{bertini95,bertini97,mueller91,walsh86}.
Let $\alpha$ denote the diffusivity of the symmetric part of the hopping rates, that is,
\begin{align}
	\label{eq:alpha:defn}
	\alpha &:= \sum_{k=1}^m k^2 r_k,
\end{align}
and extend $\ZE_t(x)$ of \eqref{eq:Z:defn} to $x\in\bbR$ by a linear interpolation.
Consider the scaled field 
\begin{align}\label{eq:calZE:defn}
	\calZE_{T}(X) := \ZE_{\E^{-2}\beta T}(\E^{-1}\beta'X),
\end{align}
where
\begin{align}\label{eq:beta:beta'}
	\beta:=\alpha^{-1}\lambda^{-4},
	\quad
	\beta':=\lambda^{-2}.
\end{align}
Let $\VertBK{f_t(x)}_l := [\bbE(|f_t(x)|^l)]^{1/l}$
denote the $L^l$-norm over randomness, $l\ge 1$.
Our main result is

\begin{theorem}\label{thm:main}
let $Z_T(X)$ be the mild solution to the \ac{SHE} starting from a $C(\bbR)$-valued process $Z_0(X)$,
and let $\calZE_T(X)$ be
defined by \eqref{eq:calZE:defn} and \eqref{eq:Z:defn} 
for a weakly asymmetric exclusion process
satisfying \eqref{eq:gamma:approx} and $m\leq 3$.
Suppose the initial condition $\calZE_0\dotBK$ satisfies
\begin{align}\label{eq:initial:cond:conv}
	\calZE_0\dotBK \text{ weakly converges to } Z_0\dotBK,
\end{align}
and for any $u\in(0,\frac12)$ 
there exist finite $C=C(u)$ and $a_0=a_0(u)$ such that
\begin{align}
	\label{eq:initial:cond:bd}
&
	\VertBK{\ZE_0(x)}_{14}
	\leq e^{a_0\E|x|} C,
	\\
	\label{eq:initial:cond:Holder}
&
	\VertBK{ \ZE_0(x)-\ZE_0(x')}_{14}
	\leq (\E|x-x'|)^u e^{a_0\E(|x|+|x'|)} C.
\end{align}

Then, under the Skorokhod topology of $D([0,\infty),C(\bbR))$,
the process $\calZE_\cdot\dotBK$ weakly converges to $Z_\cdot\dotBK$.
\end{theorem}

Bertini and Giacomin \cite{bertini97} proved Theorem \ref{thm:main} 
for the special case of $m=1$,
based on the work of G\"{a}rtner \cite{gaertner87}.
By choosing the parameters according to \eqref{eq:Gartner:para}
to match 3 non-degenerated identities,
G\"{a}rtner linearizes the drift term 
in the dynamical equation of $\ZE_t$,
making it a discrete Laplacian; thus $\ZE_t$ satisfies a \ac{dSHE}.
The difficulty in going beyond simple exclusion ($m>1$)
is that we encounter $2^m-1>3$ identities, 
so this type of reasoning fails.
Further, since we are at the fluctuation scale, 
where the time span is of $O(\E^{-2})$,
the nonlinearity of the drift causes much roughness,
and the contribution of the nonlinear terms 
in the drift are not even uniformly (in $\E$) bounded.

Instead of the approach of \cite{bertini97},
for $m>1$ we match the drift in the dynamical equation of $\ZE_t$ 
with a discrete Laplacian
\begin{align}\label{eq:disc:Laplace}
	\Lap := 2^{-1} \sum_{k=1}^m \tilrE_k (\Delta_k \ZE_t)(x)
\end{align}
slightly better than $O(\E^2)$,
where $\Delta_k f(x):= f(x+k)+f(x-k)-2f(x)$,
for some deterministic $\tilr_k\in\bbR$ specified by \eqref{eq:tilr:approx}.
To this end,
we first apply \eqref{eq:Z:defn} to get
\begin{align}\label{eq:Laplace:exp:expansion}
	\frac{\Lap}{\ZE_t(x)}
	=
	\frac12
	\sum_{k=1}^m
	\tilrE_k 
	\sqBK{ 
		\exp \Big( -\lambdaE\sum\limits_{y\in(x,x+k)}\eta_t(y) \Big)
		+ \exp \Big( \lambdaE\sum\limits_{y\in(x-k,x)}\eta_t(y) \Big)
		-2
		},
\end{align}
then Taylor-expand \eqref{eq:Laplace:exp:expansion},
and then match the $\eta$-linear and $\eta$-quadratic terms in \eqref{eq:Laplace:exp:expansion} 
with those in the drift up to $O(\E^2)$.
We find that such a matching requires that for each $j=1,\ldots,m$,
\begin{align}\label{eq:matching:eq}
\begin{split}
	\E^\frac12 \BK{ A^\E \tilrE }_j 
	&= 
	\E^\frac12 4^{-1}\sum_{k=j}^m r_k \BK{ u(\E) - \E \gammaE_k v(\E) } + O(\E^2),
\\
	\E \BK{ B \tilrE }_j 
	&= 
	\E 4^{-1}r_j  \BK{ \gammaE_j u(\E) - v(\E) } + O(\E^2),
\end{split}
\end{align}
where $A^\E$ and $B$ are the $m$-dimensional square matrices of entries
\begin{align}
	\label{eq:AE:defn}
	 A^\E_{jk} &:=
	 \bbbone_\curBK{j\le k}
	 \BK{
	 	\frac{\lambda}{2}  
		+ \E\lambda^3 \BK{ \frac{3k-2}{12} - \frac{j-1}{2} }
		},
\\
	\label{eq:B:defn}
	B_{jk}
	&:= \lambda^2 (k-j)_+ j^{-1},
\end{align}
and $u(\E)$ and $v(\E)$ are the analytic functions
\begin{align}\label{eq:uv:defn}
\begin{split}
	u(\E) &:= \E^{-\frac12} \sinh(2\lambda\E^\frac12),
	\quad
	v(\E) := \E^{-1}(\cosh(2\lambda\E^\frac12) -1),
\\
	u(0) &:= \lim_{\E\to 0} u(\E) = 2\lambda,
	\quad
	v(0) := \lim_{\E\to 0} v(\E) = 2\lambda^2.
\end{split}
\end{align}
By choosing $\gammaE_k$ to be of the specific forms \eqref{eq:gamma:approx},
we ensure the identities \eqref{eq:matching:eq} for suitable choices of $\tilrE_k$.
Namely, as shown in Appendix \ref{sec:app},
\begin{lemma}\label{lem:gamma:tilr}
There exists some explicit $\tilr^*_k\in\bbR$ 
such that any $\gammaE$ of the form \eqref{eq:gamma:approx}
and any $\tilrE$ of the form
\begin{align}\label{eq:tilr:approx}
	\tilrE_k = r_k + \E \tilr^*_k + O(\E^\frac32)
\end{align}
satisfy the equation \eqref{eq:matching:eq}.
\end{lemma}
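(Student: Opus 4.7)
The plan is to verify the two equations of \eqref{eq:matching:eq} by expanding all quantities as power series in $\varepsilon^{1/2}$ and matching coefficients order by order, using the fact that the leading-order identity of the second equation is precisely what forces the ansatz \eqref{eq:gamma:approx} for $\gammaE$, while the subleading identity of the first equation uniquely determines $\tilr^*_k$.

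First I would Taylor-expand $u(\varepsilon)$ and $v(\varepsilon)$ from \eqref{eq:uv:defn} as
\begin{equation*}
    u(\varepsilon) = 2\lambda + \tfrac{4\lambda^3}{3}\varepsilon + O(\varepsilon^2),
    \qquad
    v(\varepsilon) = 2\lambda^2 + \tfrac{2\lambda^4}{3}\varepsilon + O(\varepsilon^2),
\end{equation*}
and decompose $A^\varepsilon = A^0 + \varepsilon A^1$ via \eqref{eq:AE:defn}, where $A^0_{jk} = \bbbone_{\{j\le k\}}\lambda/2$. Writing $\gammaE_k = \gamma^*_k + O(\varepsilon)$ with $\gamma^*_k := \lambda(\tfrac{2}{r_k}\sum_{k'=k+1}^m \tfrac{k'-k}{k}r_{k'}+1)$ and $\tilrE = r + \varepsilon \tilr^* + O(\varepsilon^{3/2})$, both sides of each equation become polynomials in $\varepsilon$ whose coefficients I can match.

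Next I would address the second equation of \eqref{eq:matching:eq}. Dividing by $\varepsilon$, the leading ($\varepsilon^0$) term on the right is $\tfrac{1}{2}r_j\lambda(\gamma^*_j-\lambda)$, and using \eqref{eq:B:defn} and the definition of $\gamma^*_j$ a direct computation gives
\begin{equation*}
    (Br)_j = \tfrac{\lambda^2}{j}\sum_{k=j+1}^m (k-j)\,r_k = \tfrac{1}{2}r_j\lambda(\gamma^*_j-\lambda).
\end{equation*}
Thus the leading term matches \emph{for any} $\tilr^*$, and the ansatz \eqref{eq:gamma:approx} is exactly what makes this cancellation work; the remaining $O(\varepsilon)$ discrepancy is absorbed into the $O(\varepsilon^2)$ error allowed on the right of the second line of \eqref{eq:matching:eq}.

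Then I would turn to the first equation. Dividing by $\varepsilon^{1/2}$, matching at $\varepsilon^0$ gives $(A^0 r)_j = \tfrac{\lambda}{2}\sum_{k=j}^m r_k$, which is immediate from the form of $A^0$. The $\varepsilon^1$ coefficient yields the linear system
\begin{equation*}
    (A^0 \tilr^*)_j = \sum_{k=j}^m r_k\!\left(\tfrac{\lambda^3}{3} - \tfrac{\lambda^2}{2}\gamma^*_k\right) - (A^1 r)_j,
\end{equation*}
and since $A^0$ is upper-triangular with nonzero diagonal $\lambda/2$, this equation has a unique explicit solution $\tilr^* = (\tilr^*_1,\ldots,\tilr^*_m)$, obtainable by back-substitution. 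This defines the vector $\tilr^*$ claimed by the lemma.

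Finally I would verify that all remainders fall within the stated tolerances: the $O(\varepsilon)$ fluctuation in $\gammaE$ produces an $O(\varepsilon^2)$ contribution to the right of the first equation (since $\gammaE$ appears multiplied by $\varepsilon v(\varepsilon) = O(\varepsilon)$), while the $O(\varepsilon^{3/2})$ tolerance on $\tilrE$ together with $A^\varepsilon = O(1)$ and $B = O(1)$ gives an $O(\varepsilon^{3/2})$ contribution to the left of the first equation and an $O(\varepsilon^{5/2})$ contribution to the left of the second equation, both within the allowed errors. The only place demanding care is this bookkeeping of half-integer powers of $\varepsilon$ arising from the mismatch between the $\varepsilon^{1/2}$ prefactor in the first equation and the $\varepsilon^{3/2}$ tolerance on $\tilrE$, and I expect this to be the only technical point — once the orders are tracked cleanly, the existence and explicit form of $\tilr^*_k$ follow from solving the triangular system displayed above.
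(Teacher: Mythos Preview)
Your proposal is correct and arrives at the same conclusion as the paper, but the execution differs slightly in style. The paper first drops the $O(\E^2)$ terms and solves the resulting \emph{exact} linear system \eqref{eq:matching:eq} for $(\gamma(\E),\tilr(\E))$ as functions of $\E$, using the invertibility of $A^\E$ and of $R=\mathrm{diag}(r_k)$; it then observes the solution is $C^\infty$ in $\E$, computes $(\gamma(0),\tilr(0))=(\gamma^*,r)$, and \emph{defines} $\tilr^*_k:=\frac{d\tilr_k}{d\E}(0)$. The final step is a perturbation argument: any $\gammaE,\tilrE$ of the form \eqref{eq:gamma:approx}, \eqref{eq:tilr:approx} differ from the exact solution by $O(\E)$ and $O(\E^{3/2})$ respectively, and tracking the prefactors in \eqref{eq:matching:eq} shows these discrepancies land in the allowed $O(\E^2)$ error.

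Your approach instead Taylor-expands every ingredient first and matches coefficients order by order, solving the triangular system for $\tilr^*$ directly. This is a bit more computational but has the advantage of producing an explicit formula for $\tilr^*$ (the paper only asserts its existence as a derivative). The paper's route is cleaner conceptually and avoids tracking all the cross-terms; your route makes the mechanism (leading order of the second equation forces $\gamma^*$, subleading order of the first determines $\tilr^*$) more transparent. Both rest on the same linear algebra---the invertibility of $A^0$ (equivalently $A^\E$)---and the same key identity $(Br)_j=\tfrac12 r_j\lambda(\gamma^*_j-\lambda)$, so the underlying idea is the same.
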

\noindent

To further reduce the difference
between the drift and $\Lap$,
we need to control $\eta$-cubic terms.
The drift contains no $\eta$-cubic (or any higher order) term,
and the $\eta$-cubic terms in \eqref{eq:Laplace:exp:expansion}
already assume the gradient form
(see Definition \ref{defn:calE:gradient}),
which can be controlled by averaging over a relevant space section.
However, 
the matching of $\eta$-quadratic terms actually 
causes extra $\eta$-cubic terms,
which are of gradient form only if $m\leq 3$, see Remark \ref{rmk:m<=3},
hence our assumption $m\le 3$.
While our argument of converting terms into gradient terms 
requires the assumptions \eqref{eq:gamma:approx} and $m\le 3$,
we predict that Theorem \ref{thm:main} 
with the scaling constants \eqref{eq:calZE:defn} and \eqref{eq:beta:beta'}
holds even without either assumptions.
To the extent of our knowledge,
this is the first work studying the fluctuation of exclusion processes
by converting terms into gradient terms,
and no work has been done in this regard tackling non-gradient terms. 

Even under the assumption $m\leq 3$,
we are left with $\eta$-nonlinear terms 
with deterministic coefficients of $O(\E^2)$.
To conclude the matching of the drift and $\Lap$,
we need to show that these $\eta$-nonlinear terms
\emph{actually converge to zero} 
after being averaged over the relevant space and time section.
This we do in Lemma \ref{lem:pre:replace},
and such an estimate is also required for controlling the martingale term.
Indeed, \cite{bertini97} uses an ergodic type analysis
to control the quadratic variation of the martingale,
which applies only to a specific $\eta$-quadratic term that we find in $m=1$.
Since we encounter more general $\eta$-nonlinear terms,
we need to resort to a different approach,
adapting to our setting the sketch of \cite[Proposition 3.28]{quastel12} 
based on the one-block and two-blocks estimates 
for the simple exclusion process on $\bbZ/N\bbZ$ of \cite{kipnis89a}.

The deterministic constant $\nuE$ in \eqref{eq:Z:defn} balances
the constants coming from the microscopic dynamical equation and \eqref{eq:disc:Laplace}.
It turns out to be 
\begin{align}\label{eq:nu:defn}
	\nuE := 
	\sum_{k=1}^m 4^{-1} k r_k \BK{ \gammaE_k u(\E) - v(\E) }
	+	
	\lambda^2 \sum_{k=1}^m 
	\tilrE_k \sqBK{ k + \E (12)^{-1} (6k^2-5k) \lambda^2 }.
\end{align}
Actually, in view of \eqref{eq:gamma:approx} and \eqref{eq:tilr:approx},
we have $\nuE = 2^{-1} \lambda^2 \sum_{k=1}^m r_k k^2 + O(\E)$.

Exact one-point distribution of the \ac{KPZ} equation 
starting from the narrow wedge initial condition 
has been derived and proven by Amir--Corwin--Quastel \cite{amir11},
based on previous work of Tracy and Widom
\cite{tracy09} (see also \cite{tracy08a,tracy08,tracy11}),
and independently obtained by \cite{sasamoto10}. 
Specifically, put $\scrF(T,X) = \log Z_T(X) - \log P_T(X)$.
The law of $\scrF(T,X)$ is given as in \cite[Theorem 1.1]{amir11}.
By using Theorem \ref{thm:main} in a way similar to \cite{amir11}
and using the uniqueness of \eqref{eq:SHE:mild},
we thus obtain in our context that

\begin{theorem}\label{thm:step:stats}
Let $\calZE_T(X)$ be
defined by \eqref{eq:calZE:defn} and \eqref{eq:Z:defn} 
for a weakly asymmetric exclusion process
satisfying \eqref{eq:gamma:approx} and $m\leq 3$.
Consider, in terms of the occupation variable, the step initial condition
\begin{equation}\label{eq:step:ic}
	\eta_0(y) = \bbbone_{(0,\infty)}(y) - \bbbone_{(-\infty,0)}(y).
\end{equation}
For each fixed $(T,X)\in(0,\infty)\times\bbR$, 
\begin{align}\label{eq:F:engy:defn}
	 \FE_T(X) :=
	-\lambda h_{\beta\E^{-2} T} (\beta'\E^{-1}X)
	+ \nuE \E^{-1}\beta T + \log \frac{\lambda\beta'}{2\E^\frac12}
	- \log P_T(X)
\end{align}
converges in distribution to $\scrF(T,X)$ as $\E\to 0$.
\end{theorem}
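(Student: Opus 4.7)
The plan is to deduce Theorem \ref{thm:step:stats} from Theorem \ref{thm:main} together with continuity of the \ac{SHE} in its initial data, in the spirit of \cite{amir11}. Writing $\widehat{\calZ}^\varepsilon_T(X):=\frac{\lambda\beta'}{2\varepsilon^{1/2}}\calZ^\varepsilon_T(X)$, one sees from \eqref{eq:Z:defn}, \eqref{eq:calZE:defn}, and \eqref{eq:F:engy:defn} that $F^\varepsilon_T(X)$ is (up to rewriting) precisely $\log\widehat{\calZ}^\varepsilon_T(X)-\log P_T(X)$. Since $Z_T(X)>0$ almost surely by \cite{mueller91}, the continuous mapping theorem reduces the claim to the distributional convergence $\widehat{\calZ}^\varepsilon_T(X)\Rightarrow Z_T(X)$ for each fixed $T>0$ and $X\in\bbR$. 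Under the step initial condition \eqref{eq:step:ic}, $h_0(y)=|y|-\tfrac12$ on $\halfZ$, so a short calculation gives $\widehat{\calZ}^\varepsilon_0(X) = \frac{\lambda\beta'}{2\varepsilon^{1/2}}\exp\bigl(-\lambda\beta'\varepsilon^{-1/2}|X|\bigr)\,(1+O(\varepsilon^{1/2}))$ uniformly on compacts. This is a probability density concentrating at the origin, so $\widehat{\calZ}^\varepsilon_0\to\delta_0$ weakly as $\varepsilon\to 0$, matching the narrow-wedge initial datum of \cite{amir11}.

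Because $\delta_0\notin C(\bbR)$, the hypotheses \eqref{eq:initial:cond} of Theorem \ref{thm:main} fail at $t=0$, so I would regularize through a small macroscopic time: for each $s>0$ regard $\widehat{\calZ}^\varepsilon_s$ as a random, $\varepsilon$-dependent initial condition for the evolution on $[s,\infty)$. The argument then has three stages. First, using the mild-form representation of $\calZ^\varepsilon$ as an approximate \ac{dSHE} solution, parabolic smoothing of the discrete heat kernel acting on the spike-like datum, and the moment bounds developed in the proof of Theorem \ref{thm:main}, I would show that $\widehat{\calZ}^\varepsilon_s$ converges in distribution, as a $C(\bbR)$-valued random variable, to $Z_s$ (the time-$s$ value of the \ac{SHE} mild solution from $\delta_0$), and that it satisfies \eqref{eq:initial:cond:bd}--\eqref{eq:initial:cond:Holder} with constants depending on $s$. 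Second, Theorem \ref{thm:main} applied from initial time $s$ yields $\widehat{\calZ}^\varepsilon_{s+\cdot}\Rightarrow Z^s_\cdot$, where $Z^s_\cdot$ is the \ac{SHE} mild solution started from $Z_s$; the uniqueness statement for \eqref{eq:SHE:mild} identifies $Z^s_{T-s}=Z_T$ for every $T\ge s$. Third, a diagonal $\varepsilon\downarrow 0$ then $s\downarrow 0$ argument, using that $Z_T$ does not depend on $s$ and the almost-sure continuity of $T\mapsto Z_T(X)$ at fixed $T>0$, yields $\widehat{\calZ}^\varepsilon_T(X)\Rightarrow Z_T(X)$.

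The main obstacle lies in the first stage: quantitatively showing that the approximate \ac{dSHE} dynamics smooths the spike-like initial profile into a $C(\bbR)$-valued field at any positive macroscopic time $s>0$, uniformly in $\varepsilon$, with $L^{14}$ and H\"older control sufficient to serve as input for Theorem \ref{thm:main}. This requires mild-form estimates analogous to those used in the proof of Theorem \ref{thm:main} but adapted to an initial condition whose $L^\infty$ norm blows up like $\varepsilon^{-1/2}$ as $\varepsilon\to 0$, together with a careful accounting of the error terms generated when the microscopic drift is replaced by the discrete Laplacian \eqref{eq:disc:Laplace}. Once this control is in place, the remaining steps reduce to standard applications of Theorem \ref{thm:main}, continuous mapping, and Slutsky-type arguments.
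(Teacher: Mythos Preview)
Your overall strategy coincides with the paper's: rescale by $\lambda\beta'/(2\varepsilon^{1/2})$, observe that the rescaled initial datum approximates $\delta_0$, regularize by starting from a positive macroscopic time, invoke Theorem~\ref{thm:main} on the remaining interval, and then send the regularization parameter to zero. The paper carries this out via Lemma~\ref{lem:exact:stats:ic} (your ``main obstacle'': $L^{2j}$ and H\"older control at time $\varepsilon^{-2}\delta$, with the crucial point being that the $\varepsilon^{-1/2}$ blow-up of the initial datum is absorbed by the $t^{-1/2}$ decay of the discrete heat kernel) and Lemma~\ref{prop:exact:stats:initial:conv} (the $\delta\downarrow 0$ limit of the mild formulation).

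There is, however, a circularity in your Stage~1. You propose to show directly that $\widehat{\calZ}^\varepsilon_s \Rightarrow Z_s$, where $Z_s$ is already identified as the time-$s$ value of the \ac{SHE} from $\delta_0$. But establishing this convergence for a fixed small $s>0$ is not easier than establishing it for a fixed $T>0$: it amounts to proving convergence of the discrete \ac{SHE} with spike-like data on $[0,s]$, which is the content of the theorem itself. The moment and H\"older bounds alone give only tightness of $\widehat{\calZ}^\varepsilon_s$; identifying the subsequential limit as the specific object $Z_s$ requires exactly the martingale-problem/mild-form analysis that Theorem~\ref{thm:main} does not supply for non-$C(\bbR)$ data.

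The paper sidesteps this by \emph{not} identifying the limit at the regularization time. From Lemma~\ref{lem:exact:stats:ic} one gets tightness on $[\delta,\infty)$, and Proposition~\ref{prop:main:SHE} shows any subsequential limit $Z$ solves the \ac{SHE} on $[\delta,\infty)$ with \emph{some} initial condition $Z_\delta$. The identification happens at the fixed target time $T$: one writes $Z_T$ via the mild form \eqref{eq:exact:stats:SHE:mild} started at $\delta$, and Lemma~\ref{prop:exact:stats:initial:conv} shows that as $\delta\downarrow 0$ this expression converges to the mild form with initial datum $\delta_0$. Concretely, in the decomposition of $\scrZE_\delta$ into the four terms $J_1,\ldots,J_4$ of \eqref{eq:discr:SHE:int}, only the deterministic term $J_1$ is shown to converge (to $P_\delta$) for fixed $\delta$; the contributions of $J_2,J_3,J_4$ to $\int P_{T-\delta}(X-X')Z_\delta(X')\,dX'$ are shown to vanish only in the iterated limit $\varepsilon\to 0$ then $\delta\to 0$. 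Uniqueness of the $\delta_0$ mild solution then pins down $Z$ independently of the subsequence. So the fix is simple: in Stage~1 claim only the estimates \eqref{eq:initial:cond}, and move the identification of the limit to Stage~3, carried out at time $T$ rather than at time $s$.
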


Note that while there is \emph{no known exact formula} 
for correlation functions or moments for non-simple exclusion processes,
Theorem \ref{thm:main} implies 
exact limiting statistics for the step initial condition
by providing convergence toward the \ac{SHE}.
In the context of \ac{KPZ} universality,
Theorem \ref{thm:step:stats} is 
the first exact limiting statistics result derived 
out of the limiting stochastic partial differential equation,
and \emph{not by explicitly solving the model} in question.

In Section \ref{sec:discr:SHE}, 
we examine the dynamics equation of $\ZE_t$
and obtain an approximated \ac{dSHE},
under the assumptions \eqref{eq:gamma:approx} and $m\le 3$. 
In Sections \ref{sec:conv:SHE}
we establish the following two propositions,
from which Theorem \ref{thm:main} immediately follows.

\begin{proposition}\label{prop:main:tight}
Let $\calZE_T(X)$ be
defined by \eqref{eq:calZE:defn} and \eqref{eq:Z:defn} 
for a weakly asymmetric exclusion process
satisfying \eqref{eq:gamma:approx} and $m\leq 3$.
Assume that the initial condition $\calZE_0\dotBK$ satisfies 
\eqref{eq:initial:cond:bd} and \eqref{eq:initial:cond:Holder},
then the laws of $\curBK{\calZE}_\E$ on $D([0,\infty),C(\bbR))$ is tight.
Moreover, limit points of the law of  $\curBK{\calZE}_\E$
concentrate on $C([0,\infty),C(\bbR))$.
\end{proposition}

\begin{proposition}\label{prop:main:SHE}
Let $\calZE_T(X)$ be
defined by \eqref{eq:calZE:defn} and \eqref{eq:Z:defn} 
for a weakly asymmetric exclusion process
satisfying \eqref{eq:gamma:approx} and $m\leq 3$.
Assume that the initial condition $\calZE_0$ satisfies \eqref{eq:initial:cond:conv}
for some $C(\bbR)$-valued process $Z_0\dotBK$,
then the law of any limit point $\calZ$ of $\curBK{\calZE}$ 
is a solution to \eqref{eq:SHE:mild} starting from $Z_0$.
\end{proposition}

\noindent
Proposition \ref{prop:main:tight} is proven by
applying the H\"older continuities of the microscopic heat kernel to the \ac{dSHE}.
Proposition \ref{prop:main:SHE} is
established by converting the \ac{SHE} \eqref{eq:SHE:mild} to a martingale problem,
and proving that any limit point $\calZ$ of $\curBK{\calZE}$
satisfies the martingale problem.
The key to the proof is Lemma \ref{lem:pre:replace},
which assures certain fluctuation fields weakly converge to zero at the hydrodynamical scale.
Lemma \ref{lem:pre:replace} is proven by standard hydrodynamic-limit type analysis
using the relevant one-block and two-blocks estimates, which we do in Section \ref{sec:replace:lem}.
Finally, in order to apply Theorem \ref{thm:main} to prove Theorem \ref{thm:step:stats},
we need to establish Lemma \ref{lem:exact:stats:ic},
which ensures the H\"older continuity of $\calZE_\cdot\dotBK$
after a short time starting from the step initial condition \eqref{eq:step:ic}.
This is done in Section \ref{sec:exact:stats}.

\subsection*{Acknowledgment.} 
We thank Ivan Corwin, Fraydoun Rezakhanlou, and Stefano Olla
for useful discussions, 
and we are grateful to Tadahisa Funaki 
for the suggestion for eliminating nonlinearity by seeking gradient terms. 

\section{Discrete stochastic heat equation.}
\label{sec:discr:SHE}

Throughout this paper, we assume 
\begin{equation}\label{eq:time:assump:tilT}
	t,s \in [0,\E^{-2}\tilT],
\end{equation}
for arbitrary but fixed $\tilT>0$,
and $C$ denotes a finite positive constant that may change from line to line.
We use $x,x',x_1$ to denote points of $\bbZ$,
where the height function is defined,
and use $y,y',y_1$ to denote points of $\halfZ$,
where the particles are.
For positive integers $i,j,k\in\bbN$, 
we adopt the notation $\bari:=i-\frac12$, $\barj:=j-\frac12$, and $\bark:=k-\frac12$.

We first derive the dynamical equation of $\ZE_t$.
Recall from \eqref{eq:Possion} that $Q^k_\cdot(y)$ are independent
Poisson processes with rate $\qE_k$.
Let $\scrF_t:=\sigma\curBK{Q^k_s(y):s\in[0,t],y\in\halfZ,|k|\le m}$ and
let $a_{y_1\to y_2}$ be the indicator of allowed hops from $y_1$ to $y_2$,
that is
\begin{equation}\label{eq:a:indicator:defn}
	a_{y_1\to y_2}(\eta)
:=
	\frac{1+\eta(y_1)}{2} \frac{1-\eta(y_2)}{2}
	\in \curBK{0,1}.
\end{equation}
From the dynamics of $h$ described in Section \ref{sec:intro}, 
we know that the height at $x$ decreases by 2 
when a particle hops across $x$ from left to right. 
By the definition \eqref{eq:Z:defn} of $\ZE_t$, during $[t,t+dt]$ 
the total contribution to $d\ZE_t(x)$ of hops to the right is
\begin{equation*}
	\BK{ e^{2\lambdaE} -1 } \ZE_t(x) 
	\sum_{k=1}^m \sum_{x-k<y<x} a_{y\to y+k}(\eta_t) dQ^k_t(y).
\end{equation*}
Similarly, the contribution of hops to the left to $d\ZE_t(x)$ is
\begin{equation*}
	\BK{ e^{-2\lambdaE} -1 } \ZE_t(x)
	\sum_{k=1}^m \sum_{x<y<x+k} a_{y\to y-k}(\eta_t) dQ^{-k}_t(y).
\end{equation*}
Aside from these contributions, 
there is a continuous growth of $\ZE_t$ due to $\exp(\E \nuE t)$. 
Gathering the preceding contributions together,
and separating drifts and martingale in each $dQ^{k}_t$,
we obtain the following infinite ($x\in\bbZ$) system of
stochastic differential equations
\begin{equation}\label{eq:Z:governingEq}
	d\ZE_t
= 
	\BK{ \Omega^\varepsilon + \E\nuE  }\ZE_t dt
	+
	\ZE_t d\ME_t,
\end{equation}
where $\ME_t(x)$ is a martingale in $t$ for each $x\in\bbZ$, given by
\begin{equation}\label{eq:M:defn}
\begin{split}
&
	 d\ME_t(x) =
	\BK{ e^{2\lambdaE} -1 } 
	\sum_{k=1}^m \sum_{x-k<y<x} a_{y\to y+k}(\eta_t)
	\BK{ dQ^k_t(y) - \qE_k dt }
	\\
&
	\quad +
	\BK{ e^{-2\lambdaE} -1 } 
	\sum_{k=1}^m \sum_{x<y<x+k} a_{y\to y-k}(\eta_t)
	\BK{ dQ^{-k}_t(y) - \qE_{-k} dt },
\end{split}
\end{equation}
and
\begin{align}\label{eq:Omega:defn}
\begin{split}
	&
	\Omega^\varepsilon_t(x)
	=
	\BK{e^{2\lambdaE}-1}
	\sum_{k=1}^m \qE_k \sum_{x-k<y<x}  a_{y\to y+k}(\eta_t)	
\\
	&
	\quad
	+ 
	\BK{e^{-2\lambdaE}-1}
	\sum_{k=1}^m \qE_{-k} \sum_{x<y<x+k}  a_{y\to y-k}(\eta_t).
\end{split}
\end{align}

For the rest of this section we focus on the drift term $\OmegaE+\E\nuE$.
When $m=1$ under the choice of \eqref{eq:Gartner:para},
the drift is merely a discrete Laplacian
\begin{align*}
	\OmegaE_t + \E\nuE 
	= 2^{-1} \tilrE_1 \Delta_1 \ZE_t,
\end{align*}
where $\tilrE_1 = r_1(1-\E(\gammaE_1)^2)^\frac12$.
For $m>1$, we aim at linearizing the drift term,
to which end we expand the difference
$
	\BK{ \OmegaE+\E\nuE } - \Lap
$
in $\E$, where $\Lap$ is defined as in \eqref{eq:disc:Laplace},
and choose $\gammaE_k$ and $\tilrE_k$ 
as \eqref{eq:gamma:approx} and \eqref{eq:tilr:approx}
to reduces this difference.

Combining \eqref{eq:a:indicator:defn} and\eqref{eq:Omega:defn},
we group terms into $\eta$-linear terms, $\eta$-quadratic terms, and constants.
The $\eta$-linear terms always appear in a symmetric form:
\begin{align}
	\label{eq:Li:defn}
	\calL^\barj_t(x) := 
	\eta_t(x-\barj) - \eta_t(x+\barj). 
\end{align}
Similarly, fixing $k$ and summing over $y\in(x-k,x)$ or $y\in(x,x+k)$ in \eqref{eq:Omega:defn},
all $\eta$-quadratic terms we get are of the form
\begin{align}
	\label{eq:Qd:defn}
	\calQ^k_t(x) := \sum_{\substack{y_1<x<y_2 \\ y_2-y_1 =k}} 
	\eta_t(y_1)\eta_t(y_2).
\end{align}
Note that $-\calQ^k_t(x)$ counts the number of all possible hops of distant $k$ \emph{across} $x$.
After summing over $k$, we get from \eqref{eq:Omega:defn} that
\begin{equation}\label{eq:Omega:expansion}
	\OmegaE_t + \E\nuE = \E^\frac12\Omegali_t + \E\Omegaqd_t + (\nuE - \nuE')\E,
\end{equation}
where
\begin{align}\label{eq:nu':defn}
	\Omegali_t(x)
	:= 
	\sum_{j,k=1}^m \bbbone_\curBK{j\le k} \rhoE_{k} \calL^\barj_t(x),
\quad
	\Omegaqd_t(x) :=
	\sum_{k=1}^m \sigmaE_k  \calQ^k_t(x),
\quad
	\nuE' := \sum_{k=1}^m k \sigmaE_k,
\end{align}
and using \eqref{eq:hop:rate:defn} we have
\begin{align}
	\label{eq:rho:defn}
	\rhoE_{k} 
	&:= 
	4^{-1} \E^{-\frac12}
	\BK{ (e^{2\lambdaE}-1) \qE_k + (1-e^{-2\lambdaE}) \qE_{-k} }
	=
	4^{-1} r_k \BK{ u_k(\E) - \E \gammaE_k v_k(\E) },
\\
	\label{eq:sigma:defn}
	\sigmaE_{k} 
	&:=
	4^{-1} \E^{-1} \BK{ -(e^{2\lambdaE}-1) \qE_k + (1-e^{-2\lambdaE}) \qE_{-k} }
	=
	4^{-1} r_k \BK{ \gammaE_k u_k(\E) - v_k(\E) },
\end{align}
for $u(\E)$ and $v(\E)$ as in \eqref{eq:uv:defn}.

%
Next we Taylor-expand \eqref{eq:Laplace:exp:expansion}.
To accommodate the time evolution up to $\E^{-2}\tilT$,
we need to match \eqref{eq:Omega:expansion} with
\eqref{eq:Laplace:exp:expansion} slightly better than $O(\E^2)$.
Specifically, we neglect terms of the form $\E^2\calE_t$,
where $\calE_t$ is a linear combination of uniformly vanishing and weakly vanishing terms defined as following,
and we also neglect terms of the form $\E \calG_t$,
where $\calG_t$ is a gradient term defined as following.

\begin{definition}\label{defn:calE:gradient}
We say an $\scrF_t$-adopted process $\calE_t(x)$ is \textbf{weakly vanishing} 
if
\begin{align}\label{eq:unifom:bdd}
	\sup_{\E\in(0,1)}\sup_{t,x}
	\VertBK{\calE_t(x)}_\infty < \infty,
\end{align}
and for any $\phi\in C_c(\bbR)$,
any $T>0$, $n=1,2$,
 \begin{align}\label{eq:weakly:vanish}
 	\E^2\int_0^{\E^{-2}T} \BK{ \E\sum_x \phi(\E x) \calE_s(x) \ZE_s(x)^n } ds
 	\Longrightarrow 0.
\end{align}
We say an $\scrF_t$-adopted process $\calE_t(x)$ is \textbf{uniformly vanishing} if
$
	\lim_{\E\to 0}\sup_{t,x}
	\VertBK{\calE_t(x)}_\infty = 0.
$

A process $\calG_t(x)$ is a \textbf{gradient term} if 
$\calG_t = \sum_{|k|\le m} \nabla_k (\calE^k_t \ZE_t)$,
where each $\calE^k_t$ is a linear combination of uniformly vanishing and weakly vanishing terms.
\end{definition}

\noindent
Indeed, when integrating a gradient term against a smooth test function
as done in \eqref{eq:weakly:vanish},
by summation by parts we can move the discrete gradient to the test function
and gain a factor of $\E$.
Hence, we should think of a gradient term as carrying a factor of $\E$. 

In the sequel, we use $\calE_t$ to denote a \emph{generic} term 
that is a linear combination of uniformly vanishing and weakly vanishing terms,
and use $\calG_t$ to denote a \emph{generic} gradient term. 
Our goal is to show

\begin{proposition}\label{prop:main:drift}
For $m\le 3$, under the choice \eqref{eq:gamma:approx}, \eqref{eq:tilr:approx}, and \eqref{eq:nu:defn}
of parameters,
\begin{equation}\label{eq:discr:SHE}
	d \ZE_t =
	\frac12 \sum_{k=1}^m \tilrE_k \Delta_k \ZE_t dt	
	+
	\ZE_t d\ME_t
	+
	\BK{ \E^2 \calE_t + \E\calG_t } \ZE_t dt.
\end{equation}
\end{proposition}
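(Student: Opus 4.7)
The plan is to reduce \eqref{eq:discr:SHE} to the algebraic identity
\[
(\OmegaE_t + \E\nuE)\,\ZE_t - \Lap \;=\; (\E^2\calE_t + \E\calG_t)\,\ZE_t,
\]
then verify it by Taylor-expanding the bracket in \eqref{eq:Laplace:exp:expansion} in $\lambdaE$ and matching term by term against \eqref{eq:Omega:expansion}. Writing the Taylor expansion through order $\E^{5/2}$ and collapsing by $\eta(y)^{2}=1$ and $\eta(y)^{3}=\eta(y)$, the contributions from Taylor order $n\ge 5$ are bounded $\eta$-polynomials times $O(\E^{5/2})$, and fit into $\E^2\calE_t\ZE_t$ with $\calE_t$ uniformly vanishing. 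Organizing what remains by $\eta$-degree: the $\eta$-linear pieces come from orders $n=1,3$; the $\eta$-quadratic from $n=2,4$; a genuine $\eta$-cubic residue from $n=3$; a quartic from $n=4$; and constants from every even $n$.

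First I would match the $\eta$-linear and $\eta$-quadratic pieces. The $\eta$-linear contributions from $\Lap/\ZE_t$ naturally organize as sums of $\calL^{\barj}_t(x)$, and equating their coefficients with those coming from $\E^{1/2}\Omegali_t$ through order $\E^{3/2}$ reproduces the first identity in \eqref{eq:matching:eq}. For the quadratic match the ``same-side'' pair sums $\sum_{y_1<y_2,\, y_i\in(x,x\pm k)}\eta(y_1)\eta(y_2)$ arising from $\Lap/\ZE_t$ must first be re-expressed as the ``across-$x$'' sums $\calQ^k_t(x)$ appearing in $\Omegaqd_t$, via translations; each translation costs a discrete step-$k'$ gradient ($k'\le m$) of a bounded $\eta$-polynomial, contributing an $\E\calG_t$-type error after using the elementary bound $\ZE_t(x)-\ZE_t(x-k')=O(\E^{1/2})\ZE_t(x)$ (the $O(\E^{1/2})$ leftover is absorbed into $\E^2\calE_t\ZE_t$). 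Matching the resulting $\calQ^k_t(x)$-coefficients yields the second identity in \eqref{eq:matching:eq}. Lemma~\ref{lem:gamma:tilr} certifies both identities hold to $O(\E^2)$ for our choice of $\gammaE,\tilrE$, and the residual $O(\E^2)$ times a bounded $\eta$-polynomial is absorbed into $\E^2\calE_t\ZE_t$. The constants are balanced by the definition \eqref{eq:nu:defn} of $\nuE$ (with the $\nuE'$ contribution from $\Omegaqd_t$ subtracted).

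The only piece left unmatched is the genuine $\eta$-cubic residue, proportional to $\lambda^3\E^{3/2}\tilrE_k\bigl(\tilcalC^k_t(x-k)-\tilcalC^k_t(x)\bigr)$ with $\tilcalC^k_t(x):=\sum_{y_1<y_2<y_3,\, y_i\in(x,x+k)}\eta(y_1)\eta(y_2)\eta(y_3)$. Here the hypothesis $m\le 3$ enters decisively: for $k=1,2$ the interval $(x,x+k)$ contains fewer than three sites of $\halfZ$, so $\tilcalC^k_t\equiv 0$, leaving only $k=3$, for which the residue is \emph{already} an exact discrete step-$3$ gradient. Rewriting
\[
\E^{3/2}\bigl(\tilcalC^3_t(x)-\tilcalC^3_t(x-3)\bigr)\ZE_t(x)=\bigl[\E^{3/2}\tilcalC^3_t(x)\ZE_t(x)-\E^{3/2}\tilcalC^3_t(x-3)\ZE_t(x-3)\bigr]+\E^{3/2}\tilcalC^3_t(x-3)\bigl(\ZE_t(x-3)-\ZE_t(x)\bigr),
\]
the first bracket is a gradient of the required $\calG_t$-form (with $\calE^3_t:=\E^{1/2}\tilrE_3\tilcalC^3_t$ uniformly vanishing), and the second is $O(\E^2)\ZE_t$ by the same bound on $\ZE_t(x)-\ZE_t(x-3)$.

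I expect the main obstacle to be the quadratic matching: careful bookkeeping is needed to express each same-side pair sum as $\calQ^k_t$ plus \emph{only} discrete gradient residuals, without generating non-gradient $O(\E)$ errors. Because the matching must be exact through $O(\E^{3/2})$, every stray $O(\E)$-piece must fit into either a discrete gradient or an absorbable $\E^2\calE_t\ZE_t$; it is precisely this constraint that forces the correction $\tilr^*_k$ in \eqref{eq:tilr:approx} and, combined with the vanishing-by-sparsity of non-gradient cubic terms, the restriction $m\le 3$.
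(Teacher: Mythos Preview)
Your overall strategy matches the paper's: Taylor-expand $\Lap/\ZE_t$, match the $\eta$-linear and $\eta$-quadratic parts against \eqref{eq:Omega:expansion} via \eqref{eq:matching:eq}, and show the residual cubic is a gradient. However, there is a genuine gap in your accounting of the quadratic translation step.

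When you translate a same-side pair $\eta_t(y_1)\eta_t(y_2)\ZE_t(x)$ to the across-$x$ form, the error $\ZE_t(x)-\ZE_t(x+i)=\ZE_t(x)\bigl(\lambdaE\sum_{y\in(x,x+i)}\eta_t(y)+O(\E)\bigr)$ is \emph{not} absorbable into $\E^2\calE_t\ZE_t$ as you claim: combined with the $\E$ prefactor on $D^{\mathrm{qd}}$ it sits at order $\E^{3/2}$, one half-power too large. Expanding $\eta_t(y_1)\eta_t(y_2)\cdot\lambdaE\sum\eta_t(y)$ and using $\eta^2=1$ produces (i) an $\eta$-linear piece at $\E^{3/2}$, which must be fed back into the linear matching (this is exactly the $-\tfrac{j-1}{2}$ correction inside $A^\E$ in \eqref{eq:AE:defn}, without which the first line of \eqref{eq:matching:eq} is wrong); and (ii) a genuine $\eta$-cubic piece at $\E^{3/2}$ of the antisymmetric form \eqref{eq:tilC:defn}. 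This second source of cubic terms is distinct from the Taylor-expansion cubic $D^{\mathrm{cub}}$ you treat, and it is here---not in $D^{\mathrm{cub}}$---that the hypothesis $m\le 3$ is truly needed. Indeed $D^{\mathrm{cub}}$ is \emph{always} a step-$k$ gradient (each $\tilcalC^k_t(x)-\tilcalC^k_t(x-k)$, any $m$), whereas the translation-generated cubic \eqref{eq:tilC:defn} has, for $m>3$, summands such as $\eta_t(x+\tfrac12)\eta_t(x+\tfrac32)\eta_t(x+\tfrac72)-\eta_t(x-\tfrac72)\eta_t(x-\tfrac32)\eta_t(x-\tfrac12)$ that are not step-$k$ gradients for any $k\le m$. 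For $m\le 3$ the translation cubic collapses to the single $(i,j,k)=(1,2,3)$ term and your step-$3$ gradient argument then applies to it as well.

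A smaller point: the genuine quartic from Taylor order $n=4$ lives at $\E^2$ with a bounded but nonvanishing coefficient, so it is not uniformly vanishing; you need Lemma~\ref{lem:pre:replace} (weak vanishing of $\eta$-polynomials) to place it, and likewise the $O(\E^2)$ mismatch remainders from \eqref{eq:matching:eq}, inside $\E^2\calE_t$.
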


\noindent
To this end, we start by proving

\begin{proposition}\label{prop:drift}
For any $\tilrE_k\in\bbR$,
\begin{equation*}
	\ZE_t \Lap 
	=
	\BK{
		\E^\frac12 \sum_{j=1}^m (A^\E\tilrE)_j \calL^\barj_t
		+ \E \sum_{j=1}^m (B\tilrE)_j \calQ^j_t
		+ \E \nuE''
		+ \E^\frac32 \tilcalC_t
		+ \E^2 \calE_t
		}
	\ZE_t
	+
	\E \calG_t,
\end{equation*}
where $A^\E$ and $B$ are defined as in \eqref{eq:AE:defn} and \eqref{eq:B:defn},
and 
\begin{align}\label{eq:nu'':defn}
	\nuE'' := \frac12 \sum_{k=1}^m \tilrE_k [k (\lambdaE)^2 + \frac{6k^2-5k}{12} (\lambdaE)^4].
\end{align}
Here $\tilcalC_t$ denotes a generic $\eta$-cubic term of the form
\begin{align}\label{eq:tilC:defn}
	\sum_{0<i<j<k\le m} \alphaE_{i,j,k} 
	\Big(
		\eta_t(x+\bari)\eta_t(x+\barj)\eta_t(x+\bark) 
		- \eta_t(x-\bark)\eta_t(x-\barj)\eta_t(x-\bari) 
	\Big),
\end{align}
where $\alphaE_{i,j,k}$ are deterministic and bounded.
\end{proposition}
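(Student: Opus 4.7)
The plan is to Taylor-expand the right-hand side of \eqref{eq:Laplace:exp:expansion} in powers of $\lambdaE = \lambda \E^{1/2}$ through order $\E^2$ and sort the result by $\eta$-degree. A clean starting point is the identity $(\cosh\lambdaE - \sinh\lambdaE \cdot \eta)(\cosh\lambdaE + \sinh\lambdaE \cdot \eta) = 1$, valid because $\eta^2 = 1$. Combined with $\ZE_t(x \pm k)/\ZE_t(x) = \prod_{j=1}^k (\cosh\lambdaE \mp \sinh\lambdaE \cdot \eta_t(x \pm \bar j))$, expanding the product into the elementary symmetric polynomials $e_\ell^\pm := e_\ell(\eta_t(x \pm \bar 1), \ldots, \eta_t(x \pm \bar k))$ yields the finite exact identity
\begin{equation*}
\frac{\Delta_k \ZE_t(x)}{\ZE_t(x)} = \sum_{\ell=0}^k (\cosh\lambdaE)^{k-\ell}(\sinh\lambdaE)^\ell \bigl[(-1)^\ell e_\ell^+(x) + e_\ell^-(x)\bigr] - 2.
\end{equation*}
Odd-$\ell$ summands are antisymmetric in $\pm$ and line up directly with $\calL^\barj$ (for $\ell = 1$) and with the antisymmetric cubic form \eqref{eq:tilC:defn} (for $\ell = 3$); even-$\ell$ summands are symmetric in $\pm$ and feed constants ($\ell = 0$), same-side quadratics ($\ell = 2$), and higher remainders ($\ell \ge 4$).

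I would then match each $\ell$-sector to its target piece. Using the Taylor expansions $(\cosh\lambdaE)^{k-1}\sinh\lambdaE = \lambdaE + \tfrac{3k-2}{6}\lambdaE^3 + O(\lambdaE^5)$ and $e_1^- - e_1^+ = \sum_{j=1}^k \calL^\barj$, the $\ell = 1$ sector (after the $\tfrac12 \tilrE_k$ factor in $\Lap$) accounts for the leading coefficient $\mathbf{1}_{j \le k}[\tfrac{\lambda}{2} + \tfrac{\E \lambda^3 (3k-2)}{12}]$ of $A^\E_{jk}$. The $\ell = 3$ sector at leading order $\lambda^3 \E^{3/2}(e_3^- - e_3^+)$ fits \eqref{eq:tilC:defn} with $\alphaE_{i,j,k}$ prescribed by the Taylor coefficients. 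The $\ell = 0$ sector $2((\cosh\lambdaE)^k - 1)$ is deterministic and sums across $k$ into $\E \nuE''$ via \eqref{eq:nu'':defn}. The $\ell \ge 4$ sectors carry a prefactor $(\cosh\lambdaE)^{k-\ell}(\sinh\lambdaE)^\ell = O(\lambdaE^4) = O(\E^2)$; after subtracting deterministic averages (already absorbed into $\nuE''$), the remaining pieces are uniformly bounded polynomials in $\eta$ that satisfy the weakly vanishing property of Definition \ref{defn:calE:gradient}, hence fit into $\E^2 \calE_t$.

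The main work is in the $\ell = 2$ sector $(\cosh\lambdaE)^{k-2}(\sinh\lambdaE)^2 (e_2^+ + e_2^-)$. For each pair $(j_1, j_2)$ with $j_1 < j_2$, set $d := j_2 - j_1$ and $B(y) := \eta_t(y - \bar 1)\eta_t(y + \bar d)$; then $\eta_t(x + \bar j_1)\eta_t(x + \bar j_2) = B(x + j_1)$ is one summand of the straddling sum $\calQ^d(y) = \sum_{b=0}^{d-1} B(y - b)$. Using the factorization $\ZE_t(x)/\ZE_t(x + j_1) = \prod_{i=1}^{j_1}(\cosh\lambdaE + \sinh\lambdaE \cdot \eta_t(x + \bar i))$ (and the mirror identity on the $-$ side), I would write
\begin{equation*}
\eta_t(x + \bar j_1)\eta_t(x + \bar j_2) \ZE_t(x) = B(x + j_1) \ZE_t(x + j_1) \prod_{i=1}^{j_1}\bigl(\cosh\lambdaE + \sinh\lambdaE \cdot \eta_t(x + \bar i)\bigr).
\end{equation*}
Expanding the product, translating $y \mapsto x$ at the cost of further discrete gradients, and counting the $(k - d)_+$ admissible pairs per $k$ produces $\E (B\tilrE)_j \calQ^j_t$ with $B_{jk} = \lambda^2 (k - j)_+/j$, modulo gradients absorbed into $\E \calG_t$.

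The main obstacle I anticipate is the careful bookkeeping of the lower-order residues produced during this $\ell = 2$ conversion. Expanding $\prod_{i=1}^{j_1}(\cosh\lambdaE \pm \sinh\lambdaE \cdot \eta_t(x \pm \bar i)) = 1 \pm \sinh\lambdaE \cdot e_1^\pm(\ldots) + O(\sinh^2 \lambdaE)$ and weighting by $B$, the leading $\sinh\lambdaE \cdot \eta_t(x \pm \bar i)$ piece generically gives an $\eta$-cubic residue that, after antisymmetrization between $e_2^+$ and $e_2^-$, merges into $\tilcalC_t$ at order $\E^{3/2}$. The single exception is the index $i = j_1$, for which $B(x + j_1) \cdot \eta_t(x + \bar j_1)$ collapses via $\eta^2 = 1$ to the $\eta$-linear piece $\eta_t(x + \bar j_2)$; combining the $+$ and $-$ side contributions gives $-\sinh\lambdaE \cdot \calL^{\bar j_2}(x) \ZE_t(x)$ up to gradients, which summed over $j_1 \in [1, j_2 - 1]$ produces the $(j_2 - 1)$ multiplicity responsible for the $-\E \lambda^3 (j-1)/2$ shift inside $A^\E_{jk}$. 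The remaining $\eta$-quadratic and $\eta$-quartic residues at order $\sinh^2 \lambdaE$ and beyond are either absorbed via further gradient manipulations into $\E \calG_t$ or uniformly bounded and placed into $\E^2 \calE_t$. Verifying that every residue belongs to one of these three categories, and that all signs and combinatorial counts are consistent, is where the proof demands the most care.
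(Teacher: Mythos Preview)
Your approach is essentially the paper's, reorganized through the elementary-symmetric-polynomial expansion rather than a direct Taylor expansion of the exponentials; the content is the same, and your bookkeeping of the $\ell=1$ linear sector, the constants, and the $\ell\ge 4$ remainders matches the paper's $D^{\text{lin}}$, $\nuE''$, and $\E^2\calE_t$ pieces.

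Two points deserve comment. First, for the $\ell=3$ sector you are right that $e_3^- - e_3^+$ already has the antisymmetric form \eqref{eq:tilC:defn}, so nothing further is needed here; the paper instead spends a paragraph showing $D^{\text{cub}} = \calG_t + \E^{1/2}\calE_t$ via a full translation by $k$, but that extra work is not required for this proposition (it previews the argument of Proposition~\ref{prop:drift:m<=3}). Second --- and this is the one place where your description is thinner than the paper's --- in the $\ell=2$ sector your single translation of $\ZE_t(x)$ to $\ZE_t(x+j_1)$ lands each same-side pair on a \emph{single} straddling term $B(x+j_1)\ZE_t(x+j_1)$, which after one more gradient becomes one fixed term of $\calQ^d(x)$, not the full sum. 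The paper instead writes, for each pair $(y_1,y_2)$ with gap $j$, the identity $\eta_t(y_1)\eta_t(y_2)\ZE_t(x)=\eta_t(y_1)\eta_t(y_2)\ZE_t(x+i)+\eta_t(y_1)\eta_t(y_2)\ZE_t(x)(1-\ZE_t(x+i)/\ZE_t(x))$ at \emph{every} intermediate integer $i\in(y_1-x,y_2-x)$ and then averages over the $j$ choices of $i$; the first terms on the right sum exactly to $\calQ^j(x)\ZE_t(x)$ plus gradients, which is where the factor $j^{-1}$ in $B_{jk}=\lambda^2(k-j)_+ j^{-1}$ comes from. Your route can be completed by a second round of gradient manipulations redistributing the single straddling term over all $d$ of them, but the paper's averaging is cleaner and makes the origin of $j^{-1}$ transparent. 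The residue analysis you sketch (the $i=j_1$ collapse producing the $-(j-1)\lambda^3/2$ shift in $A^\E_{jk}$, the remaining indices feeding $\tilcalC_t$) is exactly what the paper does in equations \eqref{eq:prop:drift:4}--\eqref{eq:prop:drift:6}.
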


\begin{remark}\label{rmk:m<=3}
In Proposition \ref{prop:drift:m<=3}
we will show that for $m\le 3$, 
$\E^\frac32\tilcalC_t$ is of the form $\E^\frac32(\calG_t + \E^\frac12\calE_t)$,
hence negligible.
While this is not true when $m>3$,
we conjecture that even then the overall contribution of 
$\E^\frac32\tilcalC$ is still negligible in the limit as $\E\to 0$.
\end{remark}

The following lemma, whose proof is deferred to Section \ref{sec:replace:lem},
is needed for proving Proposition \ref{prop:drift}.

\begin{lemma}\label{lem:pre:replace}
For any fixed distinct $(y_1,\ldots,y_{n_0})\in\halfZ^j$, $n_0=1,\ldots,4$, the term 
\begin{align}\label{eq:Phi:defn}
	\Phi_t(x) := \prod_{i=1}^{n_0} \eta_t(x+y_i)
\end{align}
is weakly vanishing.
\end{lemma}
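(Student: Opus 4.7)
The plan is to establish the weak vanishing \eqref{eq:weakly:vanish} by a standard hydrodynamic-limit block replacement argument; the uniform bound \eqref{eq:unifom:bdd} is immediate from $|\eta_t| \le 1$, which yields $|\Phi_t(x)| \le 1$. First I would reduce matters to an $L^1$ estimate: using a priori $L^{14}$ moment bounds on $\ZE_s(x)$ inherited from \eqref{eq:initial:cond:bd} and propagated along the discrete SHE \eqref{eq:discr:SHE}, together with Cauchy--Schwarz, it suffices to show
\[
\bbE \Big| \varepsilon^2 \int_0^{\varepsilon^{-2} T} \varepsilon \sum_x \phi(\varepsilon x)\, \Phi_s(x)\, \ZE_s(x)^n\, ds \Big| \longrightarrow 0
\]
as $\varepsilon \to 0$.

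Next I would carry out the one-block estimate: choose a mesoscopic scale $1 \ll l \ll \varepsilon^{-1}$ and replace $\Phi_s(x)$ by a block average $\Phi_s^{(l)}(x) := l^{-1} \sum_{z=1}^{l} \Phi_s(x+z)$. This replacement is controlled via the entropy inequality combined with a Dirichlet form bound for the symmetric part of the non-simple exclusion generator. I would then apply the two-blocks estimate to replace $\Phi_s^{(l)}(x)$ by its local-Gibbs conditional expectation given the empirical density $\rho_s^{(l')}(x)$ on a larger block of size $l' \gg l$. Since $\Phi_s(x)$ is a product of $n_0$ distinct spins and the symmetric part of the dynamics preserves product Bernoulli measures, this conditional expectation equals $(2\rho_s^{(l')}(x) - 1)^{n_0}$ up to combinatorial corrections vanishing with $l'$. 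The remaining space--time average of $(2\rho_s^{(l')}(x) - 1)^{n_0}\, \ZE_s(x)^n$ weighted by $\phi$ then tends to zero because under the KPZ scaling $\eta_s$ is the discrete gradient of $h_s \sim \varepsilon^{-1/2}$ at the macroscopic scale $x \sim \varepsilon^{-1}$, forcing the smoothed mean spin $(2\rho_s^{(l')}-1)$ to be of size $\varepsilon^{1/2}$ in an appropriate averaged sense; hence the integrand is of order $\varepsilon^{n_0/2}$, which vanishes for any $n_0 \ge 1$.

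The main obstacle is the one-block estimate on $\bbZ$. In the standard torus framework of \cite{kipnis89a} one exploits full translation invariance to bound the entropy production uniformly, but on $\bbZ$ the boundary at infinity weakens this Dirichlet form bound (cf.\ Remark \ref{rmk:replace}). A modified replacement lemma tailored to the compactly supported test function $\phi$ and the $L^{14}$-controlled weight $\ZE_s(x)^n$ must therefore be derived, and must accommodate the longer-range (non-simple) jumps $|k| \le m$ in the generator. This adaptation, together with the careful matching of the mesoscopic scales $l, l'$ against the decay rate of $(2\rho - 1)^{n_0}$, constitutes the core technical content of Section \ref{sec:replace:lem}.
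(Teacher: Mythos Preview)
Your outline matches the paper's proof closely: block-average $\Phi_s$ at a mesoscopic scale, invoke a replacement lemma (built from one- and two-block estimates) to pass to a function $\tilPsi$ of the local density, and then show the local density is close to zero. Two points of your sketch need sharpening, however.

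First, since the summand in \eqref{eq:weakly:vanish} carries the weight $\ZE_s(x)^n \phi(\E x)$, before block-averaging $\Phi_s$ you must first freeze this weight over the block; the paper does this via the H\"older estimate \eqref{eq:Holder:esti:space} (the term $\UEd_{11}$ in the proof), a step your sketch omits.

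Second, and more substantively, your claim that $(2\rho^{(l')}-1) \sim \E^{1/2}$ implicitly uses the macroscopic scale $l' \sim \E^{-1}$, but as you yourself note via Remark~\ref{rmk:replace}, the weakened Dirichlet bound on $\bbZ$ forces $l' = \delta\E^{-1/2}$; at that scale the heuristic ``$h_s \sim \E^{-1/2}$ over macroscopic distances'' no longer yields $\E^{1/2}$. The paper's actual mechanism for this step (the term $\UEd_{13}$) is different: one writes
\[
	\eta^{\delta\E^{-1/2}}_s(x) = -\frac{1}{2\delta\lambda}\,\log\frac{\ZE_s(x+\delta\E^{-1/2})}{\ZE_s(x-\delta\E^{-1/2})}
\]
and controls the logarithm directly via the H\"older continuity of $\ZE$ on high-probability events where $\ZE$ is bounded below, obtaining a small positive power of $\E$ (roughly $\E^{1/16}$ in the paper's bookkeeping) rather than $\E^{1/2}$. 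This is still enough since any positive power suffices, but the mechanism is the H\"older regularity of $\ZE$, not a direct size estimate on $h_s$.
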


\begin{remark}\label{rmk:vanishing}
Since $\Phi_t(x)=\pm 1$,
it does not vanish uniformly.
Yet we expect it to vanish weakly because we at near equilibrium fluctuation.
More precisely,
for any $\alpha\in[0,1]$, 
consider the product measure $\nu_a$ on $\curBK{\pm 1}^\halfZ$ of 
the i.i.d.\ Bernoulli measures $\pi_a(1)=a$, $\pi_a(-1)=1-a$,
which is an invariant measure of exclusion processes.
The initial condition \eqref{eq:initial:cond:bd} corresponds to fluctuations near $\nu_{1/2}$.
Since $\nu_{1/2}(\prod_{i=1}^n\eta(x+y_i))=0$,
we expect $\prod_{i=1}^n\eta_t(x+y_i)$ to be small after being averaged over 
a large spacetime section.
\end{remark}

\begin{proof}[Proof of Proposition \ref{prop:drift}]
Taylor-expand the exponential functions in \eqref{eq:Laplace:exp:expansion}
up to the fourth order to get $\sum_{n=1}^4 \E^\frac{n}{2} D_n + \E^\frac52 R$,
where $D_n$ is a linear combination of terms of the form
$(-\sum_{y\in(x,x+k)} \eta_t(y))^n$ and $(\sum_{y\in(x-k)} \eta_t(y))^n$,
and $R$ is a uniformly bounded remainder.
Generically, 
$D_1$ consists of $\eta$-linear terms, 
$D_2$ consists of $\eta$-quadratic terms,
$D_3$ consists of $\eta$-cubic terms,
$D_4$ consists of $\eta$-quartic terms, respectively,
but since $\eta_t(y)^2=1$ we also get constants in $D_2$,
$\eta$-linear terms in $D_3$, 
and $\eta$-quadratic terms and constants in $D_4$.
By Lemma \ref{lem:pre:replace}, 
the non-constant terms of $D_4$ are weakly vanishing,
and clearly $\E^\frac12 R$ is uniformly vanishing.
Hence the sum of all non-constant terms 
in last two terms $\E^2(D_4+\E^\frac12 R)$ of the Taylor-expansion 
is of the type $\E^2\calE_t$.
(We will repeatedly use this fact in this proof when doing Taylor-expansion 
without explicitly stating it.)
Gathering the constants in $D_2$ and $D_4$,
and combining the $\eta$-linear terms in $D_3$ with $D_1$,
we then obtain  
\begin{align*}
	(\ZE_t)^{-1} \Lap
=
	\E^\frac12 D^\text{lin}
	+ \E D^\text{qd} + \E^\frac32 D^\text{cub}
	+ \nuE''\ZE + \E^2 \calE,
\end{align*}
where 
\begin{align}
	\label{eq:D:lin:defn}
	 D^\text{lin}_t(x)
	 &:=
	\sum_{j,k=1}^m 
	\BK{ \frac{\lambda}{2} + \E\lambda^3 \frac{3k-2}{12} }
	\bbbone_\curBK{j\le k}
	\tilrE_{k} \calL^\barj_t(x), 
\\
	\label{eq:D:qd:defn}
	 D^\text{qd}_t(x) 
	 &:=
	\frac{\lambda^2}{2}
	\sum_{k=1}^m \tilrE_k 
	\sum_{(y_1,y_2)\in\calI^{k}_2(x+) \cup \calI^{k}_2(x-)}
	\eta_t(y_1)\eta_t(y_2),
\\
	\label{eq:D:cub:defn}
	 D^\text{cub}_t(x) 
	 &:=
	\frac{\lambda^3}{2}	
	\sum_{k=1}^m \tilrE_k 
	\BK{
		\sum_{(y_1,y_2,y_3)\in\calI^{k}_3(x+)} 
		\hspace{-10pt} \eta_t(y_1)\eta_t(y_2)\eta_t(y_3)
		- \sum_{(y_1,y_2,y_3)\in\calI^{k}_3(x-)} 
		\hspace{-10pt} \eta_t(y_1)\eta_t(y_2)\eta_t(y_3)
		},
\end{align}
and
\begin{align*}
	\calI^{n}_k(x+) 
	&:= 
	\curBK{ 
		(y_1,\ldots,y_n): 
		y_1<\ldots<y_n \in \halfZ \cap (x,x+k) 
		},
\\
	\calI^{n}_k(x-) 
	&:= 
	\curBK{ 
		(y_1,\ldots,y_n): 
		y_1<\ldots<y_n \in \halfZ \cap (x-k,x) 
		}.
\end{align*}
The terms $D^\text{qd}$ is the sum of the signs of all 
possible hops within $(x,x+k)$ and within $(x-k,x)$,
and $D^\text{cub}$ is the difference of two sums,
consisting of signs corresponding to non-degenerated 
(distinct coordinates) cubic terms within $(x,x+k)$ and within $(x-k,x)$.
The factor $3k-2$ in \eqref{eq:D:lin:defn} 
counts the number of degenerated cubic terms with one coordinate being equal to a given value,
the factor $k$ in \eqref{eq:nu'':defn} counts the total number of degenerated quadratic terms,
and the factor $6k-5$ in \eqref{eq:nu'':defn} counts the number of 
of degenerated quartic terms such that two of its coordinates take the same value 
and the other two coordinates also take the same value 
(which can be the same or different from the value of the previous two coordinates).

The quadratic term $D^\text{qd}$ corresponds to hops \emph{not across} $x$,
whereas $\Omegaqd$ corresponds to hops \emph{across} $x$.
Hence, we match $D^\text{qd}$ with $\Omegaqd$ by
translating the center point $x$ to lie between the two particles.
That is, we rewrite a generic term $\calQ:=\eta_t(y_1)\eta_t(y_2)\ZE_t(x)$, 
$(y_1,y_2)\in\calI_k^2(x+)$, of $D^\text{qd}$ as
\begin{align}\label{eq:prop:drift:2}
	\calQ = \eta_t(y_1) \eta_t(y_2) \ZE_t(x+i)
	+ \eta_t(y_1) \eta_t(y_2) \ZE_t(x) \BK{ 1 - \ZE_t(x+i)/\ZE_t(x) },
\end{align}
and choose $i\in I(y_1,y_2):=\bbZ\cap(y_1-x,y_2-x)$.
Since, the first term of \eqref{eq:prop:drift:2} is a translation of 
$\eta_t(y_1-i) \eta_t(y_2-i) \ZE_t(x)$,
we write it as the sum of a gradient term and
a generic term $\eta_t(y_1-i) \eta_t(y_2-i) \ZE_t(x)$ of $\Omegaqd$.
By \eqref{eq:Z:defn}, $\ZE_t(x+i)/\ZE_t(x)$ is an exponential function of the height difference.
By Taylor-expanding the exponential function to the first order
we obtain $\eta$-linear terms and some remainders.
The $\eta$-linear terms combined with $\eta_t(y_1)\eta_t(y_2)$ yield $\eta$-cubic terms,
except when the $\eta$-linear term coincides with $\eta_t(y_1)$,
where we have $\eta_t(y_2)$.
Thus we obtain
\begin{align}\label{eq:prop:drift:3}
	\calQ =
	\calG_t(x) +
	\BK{
		\eta_t(y_1-i)\eta_t(y_2-i)
		+ \lambda \E^\frac12 \eta_t(y_2)
		+ \E^\frac12 \calC^{(0)}_t
		+ \E \calE_t(x)
		}
	\ZE_t(x),
\end{align}
where $\calC^{(0)}_t$ is a sum of $\eta$-cubic terms, and $i\in I(y_1,y_2)$.
Notice that $\calQ^j_t(x)$, as defined in \eqref{eq:Qd:defn}, 
is the sum of $\eta_t(y_1-i)\eta_t(y_2-i)$ over $i\in I(y_1,y_2)$,
where $j=y_2-y_1$ is fixed.
Hence, by summing \eqref{eq:prop:drift:3} over $i\in I(y_1,y_2)$,
we obtain
\begin{align}\label{eq:prop:drift:4}
	j \eta_t(y_1)\eta_t(y_2)\ZE_t(x) 
= 
	\calG_t(x) +
	\BK{
		\calQ^j_t(x)
		+ j \lambda \E^\frac12 \eta_t(y_2)
		+ \E^\frac12 \calC_t
		+ \E \calE_t(x)
		}
	\ZE_t(x),
\end{align}
where $\calC_t$ is a sum of $\eta$-cubic terms.
Applying the same reasoning to the mirror image $(y_2',y_1')$ 
of $(y_1,y_2)$ with respect to $x$,
namely $(y_2',y_1')=(2x-y_2,2x-y_1)$, we get 
\begin{align}\label{eq:prop:drift:5}
	j \eta_t(y_2')\eta_t(y_1')\ZE_t(x) 
= 
	\calG_t(x) +
	\BK{
		\calQ^j_t(x)
		- j \lambda \E^\frac12 \eta_t(y_2')
		- \E^\frac12 \calC'_t
		+ \E \calE_t(x)
		}
	\ZE_t(x),
\end{align}
where $\calC'_t$ is a sum of $\eta$-cubic terms,
which, by symmetry, is the mirror image of $\calC_t$ with respect to $x$
(that is, $\calC_t-\calC'_t$ is of the form $\tilcalC_t$ as in \eqref{eq:tilC:defn}).
Combining \eqref{eq:prop:drift:4} and \eqref{eq:prop:drift:5} we obtain
\begin{align}\label{eq:qd:trans}
\begin{split}
	&
	2^{-1}
	\big( \eta_t(y_1)\eta_t(y_2) + \eta_t(y_2') \eta_t(y_1') \big)
\\
	&
	\quad
	=
	\calG_t(x) +
	\BK{
		j^{-1} \calQ^j_t(x)
		- 2^{-1} \lambda \E^\frac12 \calL^{y_2-x}_t(x)
		- \E^\frac12 \tilcalC_t(x)
		+ \E \calE_t(x)
		}
	\ZE_t(x).
\end{split}
\end{align}
Since the set $\calI_2^k(x-)$ is the mirror image of $\calI_2^k(x+)$
with respect to $x$,
we can apply \eqref{eq:qd:trans} to \eqref{eq:D:qd:defn}.
After rearranging the sum over $y_1$ and $y_2$, we obtain
\begin{align}\label{eq:prop:drift:6}
	D^\text{qd}_t
=
	\lambda^2 \sum_{j,k=1}^m
	\calQ_t^l(x) \tilrE_k (k-j)_+ j^{-1}
	-
	2^{-1} \E^\frac12 \lambda
	\sum_{j,k=1}^m \tilrE_k (j-1) \bbbone_\curBK{j\le k}
	\calL^\barj_t
	+
	\calG_t
	+
	\E^\frac12 \tilcalC_t.
\end{align}
Combining the second term of \eqref{eq:prop:drift:6} with $D^\text{lin}$, we obtain
\begin{align}\label{eq:D:lin:qd:AB}
	\E^\frac12 D^\text{lin}_t + \E D^\text{qd}_t
=
	\E^\frac12 \sum_{j=1}^m (A^\E\tilrE)_j \calL^\barj_t
	+
	\E \sum_{j=1}^m (B\tilrE)_j \calQ^j_t.
\end{align}

To conclude the proof, it thus suffices to show 
that $D^\text{cub}$ is of the form $\E^\frac12 \calE_t+\calG_t$.
To this end, we employee a translation similar to \eqref{eq:prop:drift:2}.
For each cubic term $\eta_t(y_1)\eta_t(y_2)\eta_t(y_3) \ZE_t(x)$,
$(y_1,y_2,y_3)\in\calI^{3}_k(x+)$,
translate the center from $x$ to $x+k$ to get
\begin{align}\label{eq:prop:drift:1}
\begin{split}
	\eta_t(y_1)\eta_t(y_2)\eta_t(y_3) \ZE_t(x)
	&=
	\calG_t(x)
	+
	\eta_t(y_1-k)\eta_t(y_2-k)\eta_t(y_3-k) \ZE_t(x)
\\
	&
	\quad\quad
	+ \eta_t(y_1)\eta_t(y_2)\eta_t(y_3) 
	\ZE_t(x) \BK{ 1 - \ZE_t(x)/\ZE_t(x+k) }.
\end{split}
\end{align}
For the last term in \eqref{eq:prop:drift:1},
using \eqref{eq:Z:defn} and Taylor-expansion to the first order,
we turn it into the form $\E^\frac12\calE$.
Since $\calI^{3}_k(x+)\rightarrow \calI^{3}_k(x-)$: 
$(y_1,y_2,y_3)\mapsto (y_1-k,y_2-k,y_3-k)$ is a bijection,
the sum of $\eta_t(y_1-k)\eta_t(y_2-k)\eta_t(y_3-k)$ over $I^3_k(x+)$
in \eqref{eq:prop:drift:1} matches the sum over $\calI^k_3(x-)$ in \eqref{eq:D:cub:defn}.
Consequently, $D^\text{cub} = \E^\frac12 \calE_t + \calG_t$, as claimed.
\end{proof}

\begin{proposition}\label{prop:drift:m<=3}
For $m\le 3$,
$\tilcalC_t=\E^\frac12 \calE_t+\calG_t$.
\end{proposition}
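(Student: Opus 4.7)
The plan is to exploit the very rigid form of $\tilcalC_t$ when $m\le 3$: the admissible index set $\{(i,j,k):0<i<j<k\le m\}$ is empty for $m\le 2$ and contains exactly one triple, $(1,2,3)$, when $m=3$. Consequently $\tilcalC_t$ reduces (up to a bounded scalar) to the single antisymmetric difference
\begin{equation*}
g_t(x) - g_t(x-3), \qquad g_t(x) := \eta_t(x+\tfrac12)\,\eta_t(x+\tfrac32)\,\eta_t(x+\tfrac52),
\end{equation*}
since $\eta_t(x-\tfrac52)\eta_t(x-\tfrac32)\eta_t(x-\tfrac12) = g_t(x-3)$. The whole proof hinges on this structural simplification.

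Next I would multiply by $\ZE_t(x)$ (as $\tilcalC_t$ enters Proposition \ref{prop:drift} paired with $\ZE_t$) and split via the same translation trick used earlier in the proof of Proposition \ref{prop:drift}:
\begin{equation*}
\bigl(g_t(x)-g_t(x-3)\bigr)\ZE_t(x)
= \bigl[g_t(x)\ZE_t(x) - g_t(x-3)\ZE_t(x-3)\bigr]
+ g_t(x-3)\bigl(\ZE_t(x-3)-\ZE_t(x)\bigr).
\end{equation*}
The bracketed term is a discrete gradient $\nabla_3(g_t\ZE_t)$, and since $g_t(x)$ is a product of three \emph{distinct} $\eta$'s it is weakly vanishing by Lemma \ref{lem:pre:replace} (with $n_0=3\le 4$); hence that piece has the form $\calG_t$. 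For the residue I would use \eqref{eq:Z:defn} and \eqref{eq:h:defn} to write $\ZE_t(x-3)/\ZE_t(x) = \exp(\lambdaE S)$, where $S := \eta_t(x-\tfrac52)+\eta_t(x-\tfrac32)+\eta_t(x-\tfrac12)$, and Taylor expand to first order,
\begin{equation*}
\ZE_t(x-3)-\ZE_t(x) = \E^{1/2}\lambda\, S\,\ZE_t(x) + O(\E)\,\ZE_t(x).
\end{equation*}
Using $\eta_t(y)^2\equiv 1$, each product $g_t(x-3)\,\eta_t(x-\bark)$, $k\in\{1,2,3\}$, collapses to an $\eta$-quadratic in two distinct sites, hence is weakly vanishing by Lemma \ref{lem:pre:replace}; the $\E^{1/2}$-order term is therefore of the form $\E^{1/2}\calE_t\ZE_t$ with $\calE_t$ a linear combination of weakly vanishing terms. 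The $O(\E)$ tail can be written as $\E^{1/2}$ times $\E^{1/2}\cdot(\text{bounded})$, a uniformly vanishing quantity, so it also fits into $\E^{1/2}\calE_t\ZE_t$. Assembling the pieces yields $\tilcalC_t\ZE_t = \calG_t + \E^{1/2}\calE_t\,\ZE_t$, which is the advertised decomposition.

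The decisive and essentially the only nontrivial step is the first one: the hypothesis $m\le 3$ is what forces $\tilcalC_t$ to consist of a single pair of monomials related by the integer translation $x\mapsto x-3$, so that the translation trick converts it directly into a gradient modulo Taylor-expansion remainders that Lemma \ref{lem:pre:replace} absorbs. For $m\ge 4$ the triples $(1,2,4)$, $(1,3,4)$, $(2,3,4)$, \ldots\ appear, and the corresponding antisymmetric differences no longer relate two supports by a single shift, so no analogous one-step translation produces a gradient; this is precisely the technical barrier acknowledged in Remark \ref{rmk:m<=3}.
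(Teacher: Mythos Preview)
Your proposal is correct and follows essentially the same route as the paper's proof: both observe that $\tilcalC_t=0$ for $m\le 2$ and that for $m=3$ the single surviving pair of cubic monomials are translates of one another by $3$, then apply the translation trick of the last paragraph of the proof of Proposition~\ref{prop:drift} (writing the difference as a discrete gradient of a weakly vanishing term times $\ZE_t$ plus a Taylor remainder of order $\E^{1/2}$). The paper simply cites that paragraph rather than re-doing the computation, whereas you spell it out explicitly; the content is the same.
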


\begin{proof}
Clearly, $\tilcalC_t=0$ when $m=1,2$,
whereas when $m=3$ \eqref{eq:tilC:defn} consists of the single term
corresponding to $i=1$, $j=2$, $k=3$.
Since $(x+\frac12,x+\frac32,x+\frac52)$ is a translation by $3$
of $(x-\frac52,x-\frac32,x-\frac12)$,
the argument in the last paragraph 
of the proof of Proposition \ref{prop:drift} also applies to $\tilcalC$,
concluding the proof.
\end{proof}

We now combine Proposition \ref{prop:drift} and \ref{prop:drift:m<=3} 
to prove Proposition \ref{prop:main:drift}. 

\begin{proof}[Proof of Proposition \ref{prop:main:drift}]
First, comparing \eqref{eq:nu:defn} with \eqref{eq:nu':defn} and \eqref{eq:nu'':defn},
we find that the constants always match, that is $\nuE=\nuE'+\nuE''$.
Next, by \eqref{eq:Omega:expansion} and \eqref{eq:D:lin:qd:AB},
the equation \eqref{eq:matching:eq} implies 
\begin{align}\label{eq:match:li:qd}
	\E^\frac12 D^\text{lin}+ \E D^\text{qd} 
	=
	\E^\frac12 \Omegali + \E \Omegaqd + O(\E^2)R, 
\end{align}
where $R$ is a linear combination of $\eta$-linear and $\eta$-quadratic terms,
which by Lemma \ref{lem:pre:replace} is weakly vanishing.
Hence the remainder $O(\E^2)R$ is of the desired form $\E^2 \calE_t$.
Proposition \ref{prop:main:drift} now follows from Proposition \ref{prop:drift} and \ref{prop:drift:m<=3}.
\end{proof}

\section{Convergence to the \ac{SHE}}
\label{sec:conv:SHE}

Let $\bfp$ be the kernel of the following semi-discrete heat equation
\begin{equation}\label{eq:discr:heat:kernel}
\begin{split}
	&
	\frac{d~}{dt} \bfp_t(x) = \frac12 \sum_{k=1}^m \tilrE_k \Delta_k \bfp_t(x),
\\
	&
	\bfp_0(x) = \bbbone_\curBK{0}(x). 
\end{split}
\end{equation}
Let $*$ denote the convolution of two functions on $\bbZ$, that is
$(f*g)(x):=\sum_{x'} f(x-x')g(x')$.
We rewrite the \ac{dSHE} \eqref{eq:discr:SHE}
as the following integrated form:
\begin{align}\label{eq:discr:SHE:int}
	\ZE_t = \bfp_t * \ZE_0 + \int_0^t \bfp_{t-s} * (\ZE_s d\ME_s)
	+ \int_0^t \E^2 \bfp_{t-s} * (\calE_s\ZE_s) ds 
	+ \sum_{|k|\le m} \int_0^t \E\nabla_k \bfp_{t-s} * (\calE^k_s\ZE_s) ds, 
\end{align}
where we applied summation by parts to the last term.
Here, as in Definition \ref{defn:calE:gradient},
each $\calE^k$ is a linear combination of uniformly vanishing and weakly vanishing terms.

\subsection{Tightness}

In this section we prove Proposition \ref{prop:main:tight}.
The key to the proof is the H\"older estimates of $\ZE_t$ 
given in Proposition \ref{prop:Holder} and Corollary \ref{cor:Holder},
whose proofs require the following

\begin{lemma}\label{lem:QVcomparision}
Given any deterministic function $f_s(x,x')$: $[0,\infty)\times\bbZ^2\to\bbR$, let 
\begin{align*}
	\tilf_s(x,x')
	:=
	\sup\curBK{ 
		\absBK{ f_{s'}(x,x') f_{s'}(x,x'+j) } :
		\floorBK{s}  \le s' < \floorBK{s}+1, |j|<m
		}.
\end{align*}
For any $n\in\bbN$ we have
\begin{align*}
	\Big\Vert \int_t^{t'} \sum_{x'} f_s(x,x') \ZE_s(x') d\ME_s(x') \Big\Vert_{2n}^2
&\leq
	C \E \int^{t'}_t \sum_{x'} \tilf_s(x,x')
	\, \VertBK{\ZE_s(x')^2}_n ds.
\end{align*}
\end{lemma}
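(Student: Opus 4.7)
The plan is to apply the Burkholder-Davis-Gundy (BDG) inequality to the stochastic integral and to compute its quadratic variation using the independence of the underlying Poisson processes $\{Q^k(y)\}_{k,y}$.

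I would begin by expressing $d\ME_s(x')$, via \eqref{eq:M:defn}, as a linear combination of the independent compensated Poisson increments $dQ^{\pm k}_s(y)-\qE_{\pm k}ds$. Three structural observations drive the argument: (i) the processes $Q^k(y)$ are mutually independent across $(k,y)$; (ii) each coefficient in $d\ME_s(x')$ has magnitude $O(\E^{1/2})$, since $e^{\pm 2\lambdaE}-1=O(\E^{1/2})$ and $|a_{y_1\to y_2}|\le 1$; and (iii) for each $(k,y)$, the set of $x'$ whose martingale $\ME(x')$ is affected by the given Poisson process is an interval of length at most $m$. Observation (iii) is the source of the range $|j|<m$ appearing in the definition of $\tilf$.

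Set $N_t := \int_0^t \sum_{x'} f_s(x,x')\ZE_s(x')\,d\ME_s(x')$. BDG yields $\VertBK{N_{t'}-N_t}_{2n}^2 \leq C_n \VertBK{[N]_{t'}-[N]_t}_n$, and by (i) the optional quadratic variation decomposes as a sum over $(k,y)$ of integrals of squared coefficients against $dQ^k_s(y)$. By (ii)--(iii), each squared coefficient is at most $C\E$ times the square of a sum of at most $m$ terms of the form $f_s(x,x')\ZE_s(x')$ with $x'$ in a window of width $<m$. Expanding this square, applying $|\ZE_s(x')\ZE_s(x'+j)|\le \tfrac12(\ZE_s(x')^2+\ZE_s(x'+j)^2)$, and shifting the summation index produces the sought combination $\sum_{x'}\sum_{|j|<m}|f_s(x,x')f_s(x,x'+j)|\,\ZE_s(x')^2$, into which $\sum_{|j|<m}|f_sf_s|$ is absorbed by the definition of $\tilf_s$ up to a constant factor $2m-1$.

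The remaining task is to convert the Poisson measure $dQ^k_s(y)$ inside $[N]$ into the deterministic $ds$. I would write $dQ=\qE ds+d(Q-\qE s)$; the compensator contributes the target expression, after which Minkowski's integral inequality pushes $\VertBK{\cdot}_n$ inside the $s$-integral and $x'$-sum. The martingale remainder is handled by iterating BDG: its integrand has the same structure as before but carries an additional jump-size factor of $O(\E^{1/2})$, so the resulting geometric series closes. The supremum over unit time intervals in $\tilf$ is convenient looseness used at this step to decouple the integrand from the fine-scale fluctuations of $f_s$, and costs only an absolute constant. The main technical subtlety is precisely this bootstrap from the optional quadratic variation $[N]$ to the predictable one $\langle N\rangle$; alternatively, one may invoke Lenglart's domination or a Meyer-type moment inequality for pure-jump martingales to obtain the same conclusion in a single step.
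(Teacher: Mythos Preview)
Your setup through the BDG step and the pathwise computation of $[N]$ via the independent Poisson clocks matches the paper exactly, including the window of width $<m$ that produces the $|j|<m$ in $\tilf$. The divergence is in how you pass from the optional quadratic variation $[N]$ (an integral against $dQ$) to an integral against $ds$.

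The paper does not bootstrap or invoke Lenglart/Meyer. It partitions time into unit intervals $\mathcal T_i=[i,i+1)$ and on each interval replaces both $f_s$ and $\ZE_s$ by their suprema---this is the actual role of the unit-time supremum in $\tilf$, not merely ``decoupling from fine-scale fluctuations of $f$''. Two model-specific facts then finish the job: first, $\ZE_s(x'+l)\le C\ZE_s(x')$ for $|l|<m$ (from $|h(x'+l)-h(x')|\le l$), which replaces your AM--GM step; second, and crucially,
\[
\sup_{s\in\mathcal T_i}\ZE_s(x')^2\le e^{4\lambdaE N_i(x')}\ZE_i(x')^2,
\qquad
\ZE_i(x')^2\le e^{4\lambdaE N_i(x')}\inf_{s\in\mathcal T_i}\ZE_s(x')^2,
\]
where $N_i(x')$ counts hops across $x'$ during $\mathcal T_i$. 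Since $N_i(x')$ is stochastically dominated by a Poisson variable of fixed rate and is \emph{independent} of $\ZE_i(x')$, taking $L^n$-norms yields $\|\sup_{s\in\mathcal T_i}\ZE_s(x')^2\|_n\le C\int_{\mathcal T_i}\|\ZE_s(x')^2\|_n\,ds$, and summing over $i$ gives the lemma.

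Your bootstrap sketch has a gap. Splitting $dQ=\qE\,ds+d\tilde Q$ and applying BDG to the remainder $M'=\int G^2\,d\tilde Q$ produces $[M']=\int G^4\,dQ$, with $G^4\le C\E^2\bigl(\sum_{x'\in\text{window}}\tilf_s\ZE_s(x')^2\bigr)^2$; this now involves $\tilf^2$ and $\ZE^4$ and must be controlled in $L^{n/2}$ rather than $L^n$. The jump sizes are not uniformly $O(\E^{1/2})$---they are $O(\E^{1/2})\ZE_{s-}$ with $\ZE$ unbounded---so the iteration does not reduce to a geometric series in a single quantity as you claim. One can likely push such an argument through with more bookkeeping, but the paper's route sidesteps it entirely by exploiting the exponential form of $\ZE$ and the independence of the per-interval jump count from the state.
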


\begin{proof}
Fix $t$ and let 
$R_{t'}(x):=\int_t^{t'} \sum_y f_s(x,x') \ZE_s(x') d\ME_s(x')$.
By the Burkholder--Davis--Gundy inequality, 
\begin{equation}\label{eq:lem:QVcomparision:1}
	\VertBK{ R_{t'}(x)^2 }_n
\leq
	C \VertBK{ [R_\cdot(x), R_\cdot(x)]_{t'} }_n,
\end{equation}
where $[\,\cdot\,,\,\cdot\,]$ denotes the quadratic variation.
Let $\fkT_{(s_1,s_2]}(x)$ be the (random) set of all
$s\in(s_1,s_2]$ at which a particle hops across the site $x$.
Since the Poission processes 
as defined in \eqref{eq:Possion} are mutually independent,
using \eqref{eq:M:defn} we have
\begin{align*}
	[R_\cdot(x), R_\cdot(x)]_{t'}
=
	\sum_{x'} \sum_{|l|<m} 
	\sum_{s\in\fkT^l(x')} f_{s^{-}}(x,x') f_{s^{-}}(x,x'+l) 
	(e^{2\sigma(x',l)\lambdaE}-1)^2 \ZE_{s^{-}}(x') \ZE_{s^{-}}(x'+l),
\end{align*}
where $\fkT^l(x):=\fkT_{(t,t']}(x)\cap \fkT_{(t,t']}(x+l)$ is the set of $s\in(t',t]$
at which a particle hops across \emph{both} the site $x$ and $x+l$, 
and $\sigma(x,l)=1$ when the particle hops to the right, 
$\sigma(x,l)=-1$ when the particle hops to the left.
(Note that the sum of $l$ goes over $|l|<m$ since $\fkT^l(x)=\emptyset$ when $|l|\geq m$.)
Next we partition $[0,\infty)$ into subintervals $\calT_i:=[i,i+1)$.
Using $|e^{\pm\lambdaE}-1| \le C \E^{1/2}$,
and replacing $f_s$ and $\ZE_s$ by their supremum over $\calT_i$, we have
\begin{align}\label{eq:lem:QVcomparision:2}
    [R_\cdot(x), R_\cdot(x)]_{t'}
\leq
     C \E \sum_{i=\floorBK{t}}^{\floorBK{t'}} \sum_{x'}
	\tilf_i(x,x') 
	\sup_{s\in\calT_i} \max_{|l|<m} \ZE_s(x'+l)^2.
\end{align}

Since $|h_t(x+l)-h_t(x)| \le l$, 
from the definition \eqref{eq:Z:defn} of $\ZE_t$, 
\begin{equation}\label{eq:lem:QVcomparision:0}
	\max_{|l|<m} \ZE_s(x'+l)^2 \le C \ZE_s(x')^2.
\end{equation}
Next, since each hop across $x$ 
increases or decreases $\ZE_t(x)$ by a factor of $e^{2\lambdaE}$,
\begin{equation}\label{eq:lem:QVcomparision:4}
	\sup_{s\in\calT_i} \ZE_s(x')^2
	\leq
	e^{ 4\lambdaE N_i(x')} \ZE_i(x')^2,
\quad
	\ZE_i(x')^2
	\leq
	e^{ 4\lambdaE N_i(x')} 
	\inf_{s\in\calT_i} \ZE_s(x')^2
	,
\end{equation}
where $N_i(x')$ is the number of hops across $x'$ during the time interval $\calT_i$, that is
\begin{align}\label{eq:Qiy:defn}
	 N_i(x') : = \# \fkT_{\calT_i}(x'),
\end{align}
which is stochastically bounded by a 
Poisson random variable with rate $\sum_{k=1}^m kr_k$.
Take the $L^n$-norm of \eqref{eq:lem:QVcomparision:4}
using the independence of $N_i(x')$ and $Z^\varepsilon_{\varepsilon^{2}i}(x')$
and the bound $\bbE(e^{n\lambdaE N_i(x')}) \le C(n)$.
We then deduce
\begin{equation}\label{eq:lem:QVcomparision:3}
	\Big\Vert \sup_{s\in\calT_i} \ZE_s(x')^2 \Big\Vert_n
\leq
	C \ \Big\Vert \inf_{s\in\calT_i} \ZE_s(x')^2 \Big\Vert_n
\leq
	C  \left\Vert |\calT_i|^{-1} \int_{\calT_i} \ZE_s(x')^2  ds \right\Vert_n
\leq
	C \int_{\calT_i} \VertBK{ \ZE_s(x')^2 }_n ds.
\end{equation}
Combining \eqref{eq:lem:QVcomparision:1}, \eqref{eq:lem:QVcomparision:2}, 
\eqref{eq:lem:QVcomparision:0}, and \eqref{eq:lem:QVcomparision:3},
we conclude the lemma.
\end{proof}

For process $f\dotBK$ over $\bbZ$, 
$a\ge 0$, $l\in\bbN$, define the following norm
\begin{equation}\label{eq:norm:||}
	|f|_{a,l} := \sup_x \VertBK{f(x)}_l e^{-a\E|x|}.
\end{equation}
Note that $|f^2|_{2a,l}=|f|_{a,2l}^2$ and for any $x$, $x'$ we have
\begin{align}\label{eq:prop:Holder:norm}
	\VertBK{f(x')}_l \le e^{a\E|x|} e^{a\E|x-x'|} |f|_{a,l}.  
\end{align}

\begin{proposition}\label{prop:Holder}
For any $u\in (0,1)$, $a\geq 0$, $j\in\bbN$, $t,t'\in[0,\tilT\E^{-2}]$,
\begin{align}
	\label{eq:Z:moment}
	|\ZE_t|_{a,2j} &\le C(a) |\ZE_0|_{a,2j},
\\
	\label{eq:Z:Holder:esti:space}
	\VertBK{\ZE_t(x)-\ZE_t(x')}_{2j} &\leq
	C(a,u) (\E|x-x'|)^\frac{u}{2} e^{a(\E|x|+|x'|)} (|\ZE_0|_{a,2j}+N_{a,2j,u/2}) ,
\\
	\label{eq:Z:Holder:esti:time}
	\VertBK{ \ZE_t(x) - \ZE_{t'}(x) }_{2j} &\leq
	 C(a,u) (1\vee|t'-t|^\frac{u}{4}) \E^\frac{u}{2} e^{a\E|x|} (|\ZE_0|_{a,2j}+N_{a,2j,u/2}),
\end{align}
where
\begin{align*}
	N_{a,j,v} := \sup_{x\neq x'} |\E(x-x')|^{-v} 
	\VertBK{\ZE_0(x)-\ZE_0(x')}_j e^{-a\E(|x|+|x'|)}.
\end{align*}
\end{proposition}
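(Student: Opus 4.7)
The plan is to establish all three estimates by working with the mild form \eqref{eq:discr:SHE:int} of the \ac{dSHE} and bounding each of its four pieces in the seminorm $|\cdot|_{a,2j}$. I would prove \eqref{eq:Z:moment} first, then use it as an input to \eqref{eq:Z:Holder:esti:space} and \eqref{eq:Z:Holder:esti:time}.

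For \eqref{eq:Z:moment}, I take the $L^{2j}$ norm term-by-term in \eqref{eq:discr:SHE:int}. The deterministic convolution with the initial data is controlled by Minkowski together with the bound $\sum_{y}\bfp_t(x-y) e^{a\E|y-x|}\le C(a)$, uniform in $\E$ and $t\in[0,\E^{-2}\tilT]$, which follows from a local central limit theorem (LCLT) for the random walk with rates $\{\tilrE_k/2\}$. For the martingale term I apply Lemma \ref{lem:QVcomparision} with $f_s(x,y)=\bfp_{t-s}(x-y)$; the crucial input is the LCLT estimate
\begin{align*}
\sum_{y}\bfp_{t-s}(x-y)^2 \leq C(t-s+1)^{-1/2}.
\end{align*}
The resulting time integral, on the scale $t=O(\E^{-2})$, combined with the prefactor $\E$ from Lemma \ref{lem:QVcomparision}, produces a quantity of order $O(1)\cdot\sup_{s\le t}|\ZE_s|_{a,2j}^2$. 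The two drift pieces $\E^2\calE_s\ZE_s$ and $\E\nabla_k(\calE^k_s\ZE_s)$ are controlled via the uniform bound \eqref{eq:unifom:bdd} together with $\sum_y \bfp_t(y)=1$ and a standard gradient bound $\sum_y|\E\nabla_k\bfp_t(y)|\le C$. A Gronwall argument in $t$ then closes the estimate.

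For \eqref{eq:Z:Holder:esti:space}, I subtract \eqref{eq:discr:SHE:int} at $x$ and at $x'$ and bound each of the resulting four differences. The initial-data piece is handled by shifting the convolution variable and invoking $N_{a,2j,u/2}$. For the martingale piece, I apply Lemma \ref{lem:QVcomparision} with $f_s(x,x',y)=\bfp_{t-s}(x-y)-\bfp_{t-s}(x'-y)$, relying on a Hölder-in-space bound on the kernel,
\begin{align*}
\sum_{y}\bigl(\bfp_{t-s}(x-y)-\bfp_{t-s}(x'-y)\bigr)^2 \leq C\,(\E|x-x'|)^{u}\,(t-s+1)^{-(1+u)/2}\E^{-u},
\end{align*}
obtained by interpolating between the $\sum_y\bfp^2$ bound above and a gradient bound on $\bfp$. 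The $\E^2$ and gradient drift pieces are treated the same way. Substituting \eqref{eq:Z:moment} for the integrands closes the estimate. The temporal Hölder bound \eqref{eq:Z:Holder:esti:time} follows the same template: for $t<t'$, split $\ZE_{t'}(x)-\ZE_t(x)$ into the contribution from $\bfp_{t'-s}-\bfp_{t-s}$ acting on integrands over $[0,t]$, which uses a parabolic Hölder bound on $\bfp$ of the analogous form, and the contribution from integrals over $[t,t']$ of the same integrands, which yields factors of $|t'-t|^{u/4}$ via Lemma \ref{lem:QVcomparision} on the shorter interval.

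I expect the main technical obstacle to be precisely the suite of semi-discrete heat-kernel bounds invoked above (diagonal decay of $\bfp^2$, spatial and temporal Hölder continuity, gradient bound), with their correct powers of $\E$, uniformly over $t\in[0,\E^{-2}\tilT]$. Because the walk has range $m>1$, these cannot be quoted verbatim from the simple symmetric random-walk literature and must be derived from an LCLT for $\{\tilrE_k/2\}$ together with its gradient and time-derivative analogues; once those kernel estimates are in place, the rest is standard \ac{SHE}-type bookkeeping driven by Lemma \ref{lem:QVcomparision} and Gronwall.
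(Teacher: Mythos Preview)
Your proposal is correct and follows essentially the same route as the paper: bound each of the four pieces of the mild form \eqref{eq:discr:SHE:int} using Lemma~\ref{lem:QVcomparision} for the martingale and heat-kernel estimates for the rest, close \eqref{eq:Z:moment} by an integral-inequality iteration (what you call Gronwall), and feed it back into the spatial and temporal increments. Two cosmetic remarks: the paper derives the needed kernel bounds (diagonal decay, spatial/temporal H\"older, gradients) via the Fourier representation \eqref{eq:p:Fourier:expression} in the appendix rather than an LCLT, and you should keep the \emph{integral} inequality $|(\ZE_t)^2|_{2a,j}\le C|(\ZE_0)^2|_{2a,j}+C\int_0^t f_\E(t-s)|(\ZE_s)^2|_{2a,j}\,ds$ when iterating---replacing the integral by $O(1)\cdot\sup_{s\le t}$ before closing would not work since that $O(1)$ constant is not small.
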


\begin{proof}
Let $u\in (0,1)$ $a>0$ be given and fixed, 
so that we do not specify the dependence of $C$ on $u$ and $a$.
Let $I_1$, $I_2$, $I_3$, $I_4$ denote the first, second, third, fourth terms 
on the RHS of \eqref{eq:discr:SHE:int}, respectively.

We first prove \eqref{eq:Z:moment}.
Using the readily verified inequality $(\sum_{i=1}^4 b_i)^2 \leq 4\sum_{i=1}^4 b_i^2$,
we get
\begin{align}\label{eq:prop:Holder:4inequ}
	|(\ZE_t)^2|_{2a,j}
\leq
	4 \BK{ 
		|I_1^2|_{2a,j} + |I_2^2|_{2a,j}
		+|I_3^2|_{2a,j} + |I_4^2|_{2a,j} 
		}.
\end{align}
For $I_1$ we have
$
	\Vert I_1^2\Vert_j = \Vert I_1\Vert_{2j}^2
	\leq \BK{ \bfp_t * \Vert\ZE_0\Vert_{2j} }^2,
$
which by using \eqref{eq:prop:Holder:norm} for $l=2j$
and \eqref{eq:p:esti:sum} yields $|I_1^2|_{2a,j} \le C |(\ZE_0)^2|_{2a,j}$.
Next, apply Lemma \ref{lem:QVcomparision} to $\Vert I^2_2\Vert_j$
for $f_s(x,x')=\bfp_{t-s}(x-x')$,
and then use \eqref{eq:p:esti:sup} and \eqref{eq:p:esti:Linfty}.
We then obtain
\begin{align}\label{eq:prop:Holder:I2:Vert}
	\Vert I^2_2 \Vert_j
\leq
	C \E \int_0^t (t-s)^{-\frac12}
	\BK{ \bfp_\ceilBK{t-s}*\VertBK{(\ZE_s)^2}_j } ds.
\end{align}
By applying \eqref{eq:prop:Holder:norm} for $l=2j$ and \eqref{eq:p:esti:sum},
we bound the convolution in \eqref{eq:prop:Holder:I2:Vert} by  
$C e^{2a\E|x|} |(\ZE_s)^2|_{2a,j}$, yielding
\begin{align}\label{eq:prop:Holder:I2:abs}
	|I^2_2|_{2a,j} 
\leq 
	C \E \int_0^t (t-s)^{-\frac12} |(Z_s)^2|_{2a,j} ds.
\end{align}
As for $I_3$ and $I_4$,
by Definition \ref{defn:calE:gradient} we have 
$\sup_{\E,x,t}\VertBK{\calE_t(x)}_\infty,\sup_{\E,x,t}\VertBK{\calE^k_t(x)}_\infty \le C$,
yielding
\begin{align*}
	\VertBK{I_3^2}_j = \VertBK{I_3}_{2j}^2
	\leq
	C \BK{\int_0^t \E^2 \bfp_{t-s} * \VertBK{\ZE}_{2j} ds}^2,
\quad
	\VertBK{I_4^2}_j 
	\leq
	C \sum_{k\le m}
	\BK{\int_0^t \E |\nabla_k\bfp_{t-s}| * \VertBK{\ZE}_{2j} ds}^2.
\end{align*}
Further apply the Cauchy--Schwartz inequality to get 
\begin{subequations}\label{eq:prop:Holder:I34:CS}
\begin{align}
	&
	\label{eq:prop:Holder:I3:CS}
	\VertBK{I_3^2}_j \leq
	 C
	\BK{\int_0^t \E^2 \bfp_{t-s} * 1 ds }
	\BK{ \int_0^t \E^2 \bfp_{t-s} * \VertBK{(\ZE_s)^2}_j ds },
\\
	&
	\label{eq:prop:Holder:I4:CS}
	\VertBK{I_4^2}_j \leq
	 C
	\sum_{|k|\le m}
	\BK{\int_0^t \E |\nabla_k\bfp_{t-s}|*1 ds }
	\BK{ \int_0^t \E |\nabla_k\bfp_{t-s}| * \VertBK{(\ZE_s)^2}_j ds },
\end{align}
\end{subequations}
where $1$ denotes the constant function $f(x)\equiv 1$.
The first integral in \eqref{eq:prop:Holder:I3:CS} is clearly bounded by $\tilT$,
and by \eqref{eq:del:p:esti:sum} for $v=1$
the first integral in \eqref{eq:prop:Holder:I4:CS} is bounded by $ C\tilT^\frac12$.
Hence 
\begin{align}
	&
	\label{eq:prop:Holder:I3:Vert}
	\VertBK{I_3^2}_j \leq
	 C
	\int_0^t \E^2 \bfp_{t-s} * \VertBK{(\ZE_s)^2}_j ds,
\\
	&
	\label{eq:prop:Holder:I4:Vert}
	\VertBK{I_4^2}_j \leq
	 C
	\sum_{|k|\le m} \int_0^t \E |\nabla_k\bfp_{t-s}| * \VertBK{(\ZE_s)^2}_j ds.
\end{align}
Applying \eqref{eq:p:esti:sum} and \eqref{eq:del:p:esti:sum} for $v=1$,
we obtain
\begin{align*}
	|I_3^2|_{2a,j} \le C \int_0^t \E^2 |(\ZE_s)^2|_{2a,j} ds,
\quad
	|I_4^2|_{2a,j} 
	\le C \int_0^t \E (t-s)^{-\frac12} |(\ZE_s)^2|_{2a,j} ds.
\end{align*}
Combining the preceding estimates of 
$|I^2_i|_{2a,j}$, $i=1,\ldots,4$,
we arrive at the following inequality
\begin{align}\label{eq:prop:Holder:iterate}
	|(\ZE_t)^2|_{2a,j} \le C |(\ZE_0)^2|_{2a,j} + C \int_0^t f_\E(t-s) |(\ZE_s)^2|_{2a,j} ds,
\end{align}
where $f_\E(s):= \E s^{-1/2} + \E^2$.
Iterate \eqref{eq:prop:Holder:iterate} to get
\begin{align*}
	|(\ZE_s)^2|_{2a,j} \le  
	|(\ZE_0)^2|_{2a,j} \BK{ C + \sum_{l=1}^\infty \frac{C^l}{l!} \BK{ \int_0^t f_\E(s) ds }^l }.
\end{align*}
Since $t\le \E^{-2}\tilT$, we have
$\int_0^t f_\E(s) ds \le C$, concluding \eqref{eq:Z:moment}.

Next we show \eqref{eq:Z:Holder:esti:space}.
Put $n=x'-x$ so that $\ZE_t(x')-\ZE_t(x)=\nabla_n\ZE_t(x)$.
By \eqref{eq:discr:SHE:int} we have
\begin{align*}
	\Vert \nabla_n\ZE_t \Vert_{2j}
\leq
		\Vert\nabla_nI_1\Vert_{2j} + \Vert\nabla_n I_2\Vert_{2j}
		+\Vert\nabla_n I_3\Vert_{2j} + \Vert\nabla_n I_4\Vert_{2j}.
\end{align*}
For $\nabla_nI_1$,
using summation by parts to move the discrete gradient to $\ZE_0$,
we have
$
	\Vert\nabla_n I_1\Vert_{2j} 
	\leq \bfp_t * \Vert\nabla_n \ZE_0\Vert_{2j}.
$
This together with \eqref{eq:p:esti:sum} implies
\begin{equation}\label{eq:prop:Holder:I1:del}
	\Vert \nabla_n I_1(x) \Vert_{2j} \le 
	|n\E|^{u/2} e^{a\E(|x|+|x'|)} C N_{a,2j,u/2}.
\end{equation}
Next, similar to \eqref{eq:prop:Holder:I2:Vert},
applying Lemma \ref{lem:QVcomparision} to $\Vert(\nabla_n I_2)^2\Vert_j$
for $f_s(x,y)=\nabla_n\bfp_{t-s}(x-y)$,
and using $(\nabla_n\bfp_{t-s})^2 \le |\nabla_n\bfp_{t-s}| (\bfp_{t-s}+\tau_n\bfp_{t-s})$,
and then combining \eqref{eq:p:esti:sup} and \eqref{eq:del:p:esti:Linfty}, we get
\begin{align}\label{eq:prop:Holder:I2:del:Vert}
	\Vert\nabla_n I_2(x)\Vert_{2j}^2
\leq 
	C \E \int_0^t |n|^u (t-s)^{-\frac{1+u}{2}}
	\BK{ \bfp_\ceilBK{t-s} + \tau_n\bfp_\ceilBK{t-s}} * \VertBK{(\ZE_s)^2}_{j} ds.
\end{align}
Using \eqref{eq:p:esti:sum}, \eqref{eq:prop:Holder:norm}, and \eqref{eq:Z:moment}, 
and calculating the time integral,
we then turn \eqref{eq:prop:Holder:I2:del:Vert} into
\begin{align*}
	\VertBK{\nabla_n I_2(x)}_{a,2j}^2
\leq 
	C |\E n|^u e^{2a(\E|x|+\E|x'|)} |(\ZE_0)^2|_{2a,j}.
\end{align*}
For $\nabla_nI_3$ and $\nabla_nI_4$, similar to \eqref{eq:prop:Holder:I34:CS} we have
\begin{align}
	&
	\label{eq:prop:Holder:I3:del:CS}
	\VertBK{\nabla_n I_3(x)}^2_{2j} \leq
	C
	\int_0^t \E^2 |\nabla_n\bfp_{t-s}| * 1 ds
	\
	\int_0^t \E^2 (\bfp_{t-s} + \tau_n\bfp_{t-s}) *
	\VertBK{(\ZE_s)}^2_{2j} ds,
\\
	&
	\label{eq:prop:Holder:I4:del:CS}
	\VertBK{\nabla_n I_4(x)}^2_{2j} \leq
	C
	\sum_{|k|\le m}
	\int_0^t \E |\nabla_n\nabla_k\bfp_{t-s}| * 1 ds
	\
	\int_0^t \E (|\nabla_k\bfp_{t-s}| + |\tau_n\nabla_k\bfp_{t-s}| ) *
	\VertBK{(\ZE_s)}^2_{2j} ds.
\end{align}
By using \eqref{eq:del:p:esti:sum} for $v=u$ and \eqref{eq:del:del:p:esti:sum} for $v=u$ 
to estimates the first integral in \eqref{eq:prop:Holder:I4:del:CS} and \eqref{eq:prop:Holder:I3:del:CS},
we further obtain
\begin{align} 
	&
	\label{eq:prop:Holder:I3:del}
	\VertBK{\nabla_n I_3(x)}^2_{2j} \leq
	C |n\E|^u
	\int_0^t \E^2 (\bfp_{t-s} + \tau_n\bfp_{t-s}) *
	\VertBK{(\ZE_s)}^2_{2j} ds,
\\
	&
	\label{eq:prop:Holder:I4:del}
	\VertBK{\nabla_n I_4(x)}^2_{2j} \leq
	C |n\E|^u
	\sum_{|k|\le m} 
	\int_0^t \E (|\nabla_k\bfp_{t-s}| + |\tau_n\nabla_k\bfp_{t-s}| ) *
	\VertBK{(\ZE_s)}^2_{2j} ds.
\end{align}
Using \eqref{eq:p:esti:sum}, \eqref{eq:del:p:esti:sum} for $v=1$, \eqref{eq:prop:Holder:norm}, and \eqref{eq:Z:moment},
we then bound $\Vert \nabla_n I_3\Vert_{2j}^2$ and $\Vert \nabla_n I_4\Vert_{2j}^2$
by $ C[|n\E|^{u/2} e^{a\E(|x|+|x'|)} |(\ZE_0)|_{a,2j}]^2$.
Combining the preceding estimates for $\Vert \nabla_n I_i\Vert_{2j}^2$, $i=1,\ldots,4$,
we conclude \eqref{eq:Z:Holder:esti:space}.

Next we prove \eqref{eq:Z:Holder:esti:time}.
Without lost of generality, assume $t'>t$.
It suffices to show for $i=1,\ldots,4$,
\begin{align}\label{eq:prop:Holder:time}
	\Vert (I_i)_{t'}(x) - (I_i)_t(x) \Vert_{2j}
\leq
	 (1\vee(t'-t)^{u/4})\E^{u/2} e^{2a\E|x|} C \BK{ |\ZE_0|_{a,2j} + N_{a,2j,u/2} }.
\end{align}
For $i=1$, using the semi-group properties $\bfp_{t'}=\bfp_{t'-t}*\bfp_t$
and $\sum_{x_1} \bfp_{t'-t}(x_1)=1$ we have 
\begin{align}\label{eq:prop:Holder:time:I1}
	(I_1)_{t'}(x) - (I_1)_t(x) = \sum_{x_1} \bfp_{t'-t}(x-x_1)((I_1)_t(x_1)-(I_1)_t(x)).
\end{align}
By \eqref{eq:prop:Holder:I1:del}, we have
\begin{align}\label{eq:prop:Holder:time:I1:1}
	\VertBK{(I_1)_t(x_1)-(I_1)_t(x)}_{2j}
\leq
	\BK{\E|x-x_1|}^{u/2} e^{a|x-x_1|} e^{2a|x|} 
	C N_{a,2j,u/2}.
\end{align}
Combining \eqref{eq:prop:Holder:time:I1:1} with \eqref{eq:p:esti:sum} and \eqref{eq:prop:Holder:time:I1},
we conclude \eqref{eq:prop:Holder:time} for $i=1$.

For $i=2$, write $(I_2)_{t'}-(I_2)_t$ as the sum of 
$I_{21}:=\int_t^{t'} \bfp_{t'-s}*\ZE_sd\ME_s$
and $I_{22}:=\int_0^t (\bfp_{t'-s}-\bfp_{t-s})*\ZE_sd\ME_s$.
Similar to \eqref{eq:prop:Holder:I2:abs},
applying Lemma \ref{lem:QVcomparision}, \eqref{eq:p:esti:sup},
\eqref{eq:p:esti:Linfty}, and \eqref{eq:p:esti:sum} to $I_{21}$,
we bound $\Vert I_{21}\Vert_{2j}^2$ by 
\begin{align*}
	C \E \int_t^{t'} (t'-s)^{-\frac12} e^{2a\E|x|} \absBK{(\ZE_s)^2}_{2a,j} ds.
\end{align*} 
By \eqref{eq:Z:moment} and $t'-t \le \E^{-2}\tilT$,
we further bound this integral by 
$
	C \sqBK{ \E^\frac{u}{2}(t'-t)^\frac{u}{4} e^{2a\E|x|} |\ZE_0|_{a,2j}  }^2.
$
Similarly, applying  Lemma \ref{lem:QVcomparision} and \eqref{eq:p:esti:sup} 
to $I_{22}$,
using 
$
	(\bfp_{t'-s}-\bfp_{t-s})^2 
	\le
	|\bfp_{t'-s}-\bfp_{t-s}| (\bfp_{t'-s}+\bfp_{t-s}), 
$
and then combining \eqref{eq:p:holder:time:esti} for $v=\frac{u}{2}$,
\eqref{eq:p:esti}, and \eqref{eq:Z:moment},
we bound $\Vert I_{21}\Vert_{2j}^2$ by 
$C\sqBK{ \E^\frac{u}{2}(t'-t)^\frac{u}{4} e^{2a\E|x|} |\ZE_0|_{a,2j}}^2$.
The estimates for $\Vert I_{21}\Vert_{2j}^2$ and $\Vert I_{21}\Vert_{2j}^2$
conclude \eqref{eq:prop:Holder:time} for $i=2$.

For $i=3$, write $(I_2)_{t'}-(I_2)_t$ as the sum of 
$I_{31}:=\int_t^{t'} \E^2 \bfp_{t'-s}*\calE_s\ZE_s ds$
and $I_{32}:=\E^2\int_0^t (\bfp_{t'-s}-\bfp_{t-s})*\calE_s\ZE_s ds$.
Applying \eqref{eq:p:esti:sum} and \eqref{eq:Z:moment},
we bound $\Vert I_{31} \Vert_{2j}$ by a constant multiple of
\begin{align*}
	\E^2(t'-t) e^{a\E|x|} |\ZE_0|_{a,2j}
\leq 
	\tilT^{1-\frac{u}{4}} 
	\E^\frac{u}{2}(t'-t)^\frac{u}{4} e^{a\E|x|} |\ZE_0|_{a,2j}.
\end{align*}
For $I_{32}$,
similar to \eqref{eq:prop:Holder:I34:CS},
by introducing a weight $g_{s}(x):=e^{|x|(1\wedge s^{-1/2})}$ 
in the Cauchy--Schwartz inequality,
we bound $\Vert I_{32}\Vert_{2j}^2$
by  
\begin{align}\label{eq:prop:Holder:5}
	C
	\int_0^t \E^2 
	\sqBK{ \BK{|\bfp_{t'-s}-\bfp_{t-s}|/g_{t-s} } * 1 } ds 
	\
		\int_0^t \E^2 \BK{(\bfp_{t'-s}+\bfp_{t-s})g_{t-s}}
		*\VertBK{(\ZE_s)^2}_j ds.
\end{align}
For the first integral, using \eqref{eq:p:holder:time:esti} for $v=\frac{u}{2}$
we bound it by $C \E^u (t'-t)^\frac{u}{2}$.
For the second integral,
combing \eqref{eq:p:esti:sum} and \eqref{eq:Z:moment}
we bound it by $C e^{2a\E|x|} |\ZE_0|_{a,2j}^2$.
Hence $\Vert I_{32}\Vert_{2j}^2$ is bounded by 
$C[\E^\frac{u}{2}(t'-t)^\frac{u}{4} e^{a\E|x|} |\ZE_0|_{a,2j}]^2$.
Our estimates for $I_{31}$ and $I_{32}$ conclude 
\eqref{eq:prop:Holder:time} for $i=3$.

Finally, for $i=4$, we similarly define
$I_{41}:=\int_t^{t'} \E \nabla_k\bfp_{t'-s}*\calE^k_s\ZE_s ds$
and $I_{42}:=\int_0^t (\nabla_k\bfp_{t'-s}-\nabla_k\bfp_{t-s})*\calE_s^k\ZE_s ds$.
For $I_{41}$, 
applying \eqref{eq:del:p:esti:sum} for $v=1$ and \eqref{eq:Z:moment} 
we bound $\Vert I_{41} \Vert_{2j}$ by a constant multiple of
\begin{align*}
	\E(t'-t)^\frac12 e^{a\E|x|} |\ZE_0|_{a,2j}
\leq 
	\tilT^{\frac12-\frac{u}{4}} 
	\E^\frac{u}{2}(t'-t)^\frac{u}{4} e^{a\E|x|} |\ZE_0|_{a,2j}.
\end{align*}
For $I_{42}$,
similar to \eqref{eq:prop:Holder:5},
we bound $\Vert I_{42}\Vert_{2j}^2$
by a constant multiple of 
\begin{align*}
	\sum_{|k|\le m} 
	\int_0^t \E 
	\BK{|\nabla_k\bfp_{t'-s}-\nabla_k\bfp_{t-s}|/g_{t-s} } * 1 ds 
	\
	\int_0^t \E \BK{(\nabla_k\bfp_{t'-s}+\nabla_k\bfp_{t-s})g_{t-s}}
	*\VertBK{(\ZE_s)^2}_j ds.
\end{align*}
For the first integral, using 
\eqref{eq:del:p:holder:time:esti} for $v=\frac{u}{2}$
we bound it by $C \E^u (t'-t)^\frac{u}{2}$.
For the second integral,
combing \eqref{eq:del:p:esti:sum} and \eqref{eq:Z:moment}
we bound it by $Ce^{2a\E|x|}|\ZE_0|_{a,2j}^2$.
Hence $\Vert I_{42}\Vert_{2j}^2$ is bounded by
$C [\E^\frac{u}{4}(t'-t)^\frac{u}{2} e^{a\E|x|} |\ZE_0|_{a,2j}]^2$.
Our estimates for $I_{41}$ and $I_{42}$ conclude 
\eqref{eq:prop:Holder:time} for $i=4$.
\end{proof}

Proposition \ref{prop:Holder} immediately implies following corollary
\begin{corollary}\label{cor:Holder}
Under the assumptions \eqref{eq:initial:cond:bd} and \eqref{eq:initial:cond:Holder},
for all $u\in(0,1)$ we have the following estimates
\begin{align}
	\label{eq:moment:esti}
	\VertBK{\ZE_t}_{14} &\le C(u) e^{a_0\E|x|},
\\
	\label{eq:Holder:esti:space}
	\VertBK{\ZE_t(x)-\ZE_t(x')}_{14} &\leq
	C(u) (\E|x-x'|)^\frac{u}{2} e^{a_0(\E|x|+|x'|)},
\\
	\label{eq:Holder:esti:time}
	\VertBK{ \ZE_t(x) - \ZE_{t'}(x) }_{14} &\leq
	C(u) (1\vee|t'-t|^\frac{u}{4}) \E^\frac{u}{2} e^{2a_0\E|x|},
\end{align}
where $a_0$ is the same as in \eqref{eq:initial:cond:bd} and \eqref{eq:initial:cond:Holder}.
\end{corollary}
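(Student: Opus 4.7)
The plan is to derive the corollary as an immediate consequence of Proposition~\ref{prop:Holder}, so the work amounts to matching parameters and verifying that the initial data norms appearing on the right-hand sides of \eqref{eq:Z:moment}--\eqref{eq:Z:Holder:esti:time} are finite under the hypotheses \eqref{eq:initial:cond:bd} and \eqref{eq:initial:cond:Holder}. Specifically, given $u \in (0,1)$, I would set $u_0 := u/2 \in (0,1/2)$, invoke \eqref{eq:initial:cond:bd} and \eqref{eq:initial:cond:Holder} with exponent $u_0$ to obtain constants $C = C(u_0)$ and $a_0 = a_0(u_0)$ such that
\begin{equation*}
\VertBK{\ZE_0(x)}_{14} \le C \, e^{a_0 \E |x|}, \qquad \VertBK{\ZE_0(x) - \ZE_0(x')}_{14} \le C (\E|x-x'|)^{u_0} e^{a_0 \E(|x|+|x'|)}.
\end{equation*}

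Translating these bounds into the norms defined in the proposition, the first inequality gives $|\ZE_0|_{a_0,14} \le C$, and the second yields $N_{a_0, 14, u/2} = N_{a_0, 14, u_0} \le C$. Then I would apply Proposition~\ref{prop:Holder} with $j = 7$, $a = a_0$, and Hölder parameter equal to the corollary's $u$ (noting that the proposition's Hölder exponent $u/2$ agrees with the initial condition's $u_0$). Substituting the two finite bounds above into \eqref{eq:Z:moment} yields \eqref{eq:moment:esti}, while substituting into \eqref{eq:Z:Holder:esti:space} and \eqref{eq:Z:Holder:esti:time} yields \eqref{eq:Holder:esti:space} and \eqref{eq:Holder:esti:time}, respectively, with the constants $C(u)$ absorbing the dependence on $u_0 = u/2$ and on $a_0$.

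There is no substantive obstacle to overcome: all the analytic work---the moment bound, the iteration argument via \eqref{eq:prop:Holder:iterate}, and the spatial and temporal Hölder continuity---is already contained in Proposition~\ref{prop:Holder}, whose proof in turn relies on the heat kernel estimates and Lemma~\ref{lem:QVcomparision}. The only mild bookkeeping point is the factor of two between the Hölder exponent $u$ in the corollary and the exponent $u_0 = u/2$ appearing in the hypothesis on the initial data; this accounts for the restriction $u_0 \in (0, 1/2)$ in \eqref{eq:initial:cond:Holder} being consistent with $u \in (0,1)$ in the conclusion.
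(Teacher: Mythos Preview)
Your proposal is correct and matches the paper's approach exactly: the paper states that Proposition~\ref{prop:Holder} ``immediately implies'' the corollary, and your argument---choosing $j=7$, $a=a_0$, and matching the H\"older exponent $u$ via $u_0=u/2$ in the hypotheses \eqref{eq:initial:cond:bd}--\eqref{eq:initial:cond:Holder}---is precisely the intended specialization. The bookkeeping you note about the factor of two between the exponent in the conclusion and in the initial-data hypothesis is the only point requiring care, and you handle it correctly.
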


\begin{proof}[Proof of Proposition \ref{prop:main:tight}]

This proposition is the generalization of 
the first half (tightness) of \cite[Theorem 3.3]{bertini97}
to $m>1$.
The original proof by \cite{bertini97} for $m=1$
actually applies to all processes satisfying the conclusions of
\cite[Lemma 4.1, 4.5-4.7]{bertini97},
whose proofs relies only on the conclusions of \cite[Lemma 4.1-4.3]{bertini97}
for $p>12$ and the fact that $N_i(x')$, as defined in \eqref{eq:Qiy:defn},
is stochastically bounded by a Poisson random variable with a fixed rate.
(Specifically, the assumption $p>12$ is used in \cite[(4.60)]{bertini97}.)
In our case, $\ZE$ satisfies  
\eqref{eq:moment:esti}, \eqref{eq:Holder:esti:space}, and \eqref{eq:Holder:esti:time},
which correspond to the conclusions of \cite[Lemma 4.1, 4.2, 4.3]{bertini97}
for $p=14>12$, respectively,
and $N_i(y)$ is stochastically bounded by a Poisson random variable with rate
$(\sum_{k=1}^m r_k k)$, as shown in the proof of Lemma \ref{lem:QVcomparision}.
\end{proof}

\subsection{Convergence}

In this section we prove Proposition \ref{prop:main:SHE}.
To this end, we first obtain an expression of the 
predictable quadratic variation of $\ME$.

\begin{proposition}\label{prop:QVexpression}
$d\angleBK{\ME(x),\ME(x')}$ vanishes unless $|x-x'|<m$,
and for $|x-x'|<m$
\begin{align*}
	 \ZE_t(x) \ZE_t(x')
	d\angleBK{\ME(x),\ME(x')}
=
	\E \BK{ \lambda^2 \alpha_{l} + \calE_t(x) } \ZE_t(x)^2 dt,
\end{align*}
where $l:=x-x'$ and
\begin{align}\label{eq:alphal:defn}
	\alpha_{l} := \sum_{k=|l|+1}^m r_k(k-|l|).
\end{align}
\end{proposition}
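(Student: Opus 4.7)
The plan is to compute $d\angleBK{\ME(x),\ME(x')}$ directly from \eqref{eq:M:defn} using the mutual independence of the Poisson clocks $\{Q^k_t(y)\}_{k,y}$. Introducing the compensated martingales $\widetilde{Q}^k_t(y):=Q^k_t(y)-\qE_k t$, which are orthogonal for distinct $(k,y)$ with predictable bracket $d\angleBK{\widetilde{Q}^k(y),\widetilde{Q}^k(y)}=\qE_k\,dt$, the covariation $d\angleBK{\ME(x),\ME(x')}$ receives contributions only from clocks appearing in both $d\ME(x)$ and $d\ME(x')$.

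First I identify when such shared clocks exist. A right-hop clock $Q^k(y)$ appears in $d\ME(x)$ iff $y\in(x-k,x)$, hence in both iff $y\in(\max(x,x')-k,\min(x,x'))$, which is nonempty iff $k>|x-x'|$; the condition for left-hop clocks $Q^{-k}(y)$ is identical. Thus no shared clock exists when $|x-x'|\ge m$, giving the first assertion.

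For $|l|<m$, say $l=x-x'>0$, the Taylor expansions $(e^{\pm 2\lambdaE}-1)^2\qE_{\pm k}=2\lambda^2 r_k\E+O(\E^{3/2})$ combined with the change of variable $y\mapsto y-k$ in the left-hop sum (which maps its range $y\in(x,x'+k)$ to $y\in(x-k,x')$) yield
\begin{align*}
d\angleBK{\ME(x),\ME(x')} &= 2\lambda^2\E\sum_{k=l+1}^m r_k\sum_{y\in(x-k,x')}\bigl[a_{y\to y+k}(\eta_t)+a_{y+k\to y}(\eta_t)\bigr]dt \\
&\quad + \E\cdot O(\E^{1/2}) R_t(x)\,dt,
\end{align*}
with $R_t(x)$ uniformly bounded. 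The key algebraic identity
$a_{y\to y+k}(\eta)+a_{y+k\to y}(\eta)=\tfrac12(1-\eta(y)\eta(y+k))$
together with the count $\#\{y\in\halfZ:y\in(x-k,x')\}=k-l$ splits the main sum into the deterministic part $\lambda^2\E\alpha_l\,dt$ (using the definition \eqref{eq:alphal:defn}) and an $\eta$-quadratic residual $-\lambda^2\E\sum_{k>l}r_k\sum_y\eta_t(y)\eta_t(y+k)\,dt$ whose summands are weakly vanishing by Lemma \ref{lem:pre:replace}. Combined with the $O(\E^{1/2})R_t(x)$ remainder, which is uniformly vanishing, this gives $d\angleBK{\ME(x),\ME(x')}=\E\bigl(\lambda^2\alpha_l+\calE_t(x)\bigr)dt$ for a generic $\calE_t(x)$.

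Finally, to insert the prefactor required in the statement, I use that $|h_t(x)-h_t(x')|\le|x-x'|<m$ implies $\ZE_t(x')/\ZE_t(x)=\exp(-\lambdaE(h_t(x')-h_t(x)))=1+O(\E^{1/2})$ with uniform bound, so $\ZE_t(x)\ZE_t(x')=\ZE_t(x)^2(1+O(\E^{1/2}))$ and the $O(\E^{1/2})$ discrepancy (multiplied by $\lambda^2\alpha_l$ or the already-obtained $\calE_t(x)$) is absorbed into the uniformly vanishing part of a new generic $\calE_t(x)$. The main substantive step is the pairing identity for $a_{y\to y+k}+a_{y+k\to y}$ followed by Lemma \ref{lem:pre:replace}; everything else is bookkeeping of $O(\E^{1/2})$ error scales across uniformly vanishing versus weakly vanishing categories.
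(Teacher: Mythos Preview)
Your proof is correct and follows essentially the same approach as the paper: both compute the bracket from the independence of the Poisson clocks, identify that only hops crossing both $x$ and $x'$ contribute, Taylor-expand $(e^{\pm 2\lambdaE}-1)^2\qE_{\pm k}$, extract the constant $\lambda^2\alpha_l$ via the explicit form of the hop indicators, invoke Lemma~\ref{lem:pre:replace} for the residual $\eta$-polynomial terms, and finally use $\ZE_t(x')=\ZE_t(x)(1+O(\E^{1/2}))$. Your change of variable $y\mapsto y-k$ to pair right and left hops and the resulting identity $a_{y\to y+k}+a_{y+k\to y}=\tfrac12(1-\eta(y)\eta(y+k))$ is a slightly cleaner organization than the paper's direct expansion (which keeps $\eta$-linear pieces around in its intermediate $w_1$), but the substance is identical.
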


\begin{proof}
By the independence of the Poisson processes $Q^k_t(y)$, we have
\begin{align}\label{eq:lem:QVexpression:Poiss}
 	\angleBK{Q^{y_2-y_1}_t(y_1),Q^{y_2'-y_1'}_t(y_1')}_t
=
	\int_0^t 
 	\bbbone_\curBK{(y_1,y_2)=(y'_1,y'_2)}\qE_{y_2-y_1} ds.
\end{align}
Rewrite \eqref{eq:M:defn} as
\begin{equation}\label{eq:M:expression}
	 d\ME_t(x) = 
	\sum_{y_1,y_2}
	\BK{ e^{2\sign(y_2-y_1)\lambdaE} -1 } 
	 a_{y_1\to y_2}(\eta_t)
	\BK{ dQ^{y_2-y_1}_t(y_1) - \qE_{y_2-y_1} dt },
\end{equation}
where the sum is taken over all hops $y_1\to y_2$ across $x$.
We then take the product of $d\ME(x)$ and $d\ME(x')$ 
using \eqref{eq:lem:QVexpression:Poiss} and \eqref{eq:M:expression} to get
\begin{align*}
	d\angleBK{\ME(x),\ME(x')}
=
	\sum_{y_1,y_2}
	(e^{2\sign(y_2-y_1)\lambdaE}-1)^2 a_{y_1\to y_2}(\eta_t) \qE_{y_2-y_1}dt.
\end{align*}
Here the sum is taken over all hops $y_1\to y_2$ that cross both $x$ and $x'$,
hence is nonzero only when $|x-x'|<m$,
where by putting $(y_1,y_2)=(x\pm \barj,x\pm \barj\mp k)$ we get 
\begin{align}\label{eq:lem:QVexpression:1}
\begin{split}
	&
	d\angleBK{\ME(x),\ME(x')}
	=
	\sum_{k=l+1}^m \sum_{j=1\vee(1+l)}^{k\wedge(k+l)}
	\Big( 
		(e^{-2\lambdaE}-1)^2 a_{x+\barj \to x+\barj-k}(\eta_t) \qE_{-k} 
\\
		&\quad	
		+ (e^{2\lambdaE}-1)^2 a_{x-\barj\to x-\barj+k}(\eta_t) \qE_{k} 
	\Big) dt.
\end{split}
\end{align}
Taylor-expanding $(e^{\pm 2\lambdaE}-1)^2$ in \eqref{eq:lem:QVexpression:1} to the first order
and using \eqref{eq:a:indicator:defn},
we obtain
\begin{align}\label{eq:lem:QVexpression:2}
\begin{split}
	&
	d\angleBK{\ME(x),\ME(x')}
	=
	\E\sum_{k=|l|+1}^m \sum_{j=1\vee(1+l)}^{k\wedge(k+l)}
	\Big( \lambda^2 (\qE_{k}+\qE_{-k} + w_1) + w_2 \Big) dt,
\end{split}
\end{align}
where $w_1$ is a sum of $\eta$-linear and $\eta$-quadratic terms 
and $w_2$ is uniformly vanishing.
By \eqref{eq:Z:defn} and Taylor expansion to the first order,
we have 
$
	\ZE_t(x') = \ZE_t(x)\BK{ 1 + w_3 },
$
for some uniformly vanishing $w_3$.
Clearly $w_1w_3$ and $w_2w_3$ are uniformly vanishing,
and by Lemma \ref{lem:pre:replace} $w_1$ is weakly vanishing.
Multiplying \eqref{eq:lem:QVexpression:2} by 
$
	\ZE_t(x)\ZE_t(x')
=
	\ZE_t(x)^2 \BK{ 1 + w_3 }
$
and using $\qE_k+\qE_{-k}=r_k$, we conclude the proof.
\end{proof}

We next use use a  martingale problem to prove Proposition \ref{prop:main:SHE}.

\begin{definition}\label{defn:MGprob}
Let $Z_\cdot\dotBK$ be a $C([0,\infty),C(\bbR))$-valued process such that
given any $\tilT>0$, there exists $A\ge 0$ such that
\begin{equation}\label{eq:MGprob:initial}
	\sup_{T\in[0,\tilT]} \sup_X
	e^{-A|X|} \bbE \BK{Z_T(X)^2}
<
	\infty.
\end{equation}
The process $Z_\cdot\dotBK$ solves the martingale problem with initial condition $\calZ_0$ 
if $Z_0=\calZ_0$ in distribution and
\begin{align*}
	(\phi,Z_T) - (\phi,Z_0)
	- \frac{1}{2} \int_0^T \BK{\phi'',Z_S} dS,
	\quad
	 N_T(\phi)^2
	 - \int_0^T (\phi^2,Z_S^2) dS,
\end{align*}
are local martingale for any $\phi\in C^\infty_c(\bbR)$,
where $(\phi,\psi):= \int_\bbR \phi(x)\psi(x) dx$.
\end{definition}

\begin{proof}[Proof of Proposition \ref{prop:main:SHE}]

Recall from \cite[Proposition 4.11]{bertini97} that
for any initial condition $\calZ_0$ satisfying 
\begin{align}\label{eq:prop:main:SHE:ic}
	\VertBK{\calZ_0(X)}_{2}
	\leq C e^{A|X|},
	\text{ for some } A >0,
\end{align}
the martingale problem Definition \ref{defn:MGprob} has a unique solution,
which coincides with the law of the solution to \eqref{eq:SHE:mild} with initial condition $\calZ_0$.
Consequently, it suffices to show that \eqref{eq:prop:main:SHE:ic}
holds and any limit point $\calZ$ of $\curBK{\calZE}$ solve the martingale problem
Definition \ref{defn:MGprob} starting from $Z_0$.
By passing to the relative subsequence we assume
$\calZE_\cdot\dotBK\Rightarrow \calZ_\cdot\dotBK$.

Clearly, \eqref{eq:prop:main:SHE:ic} and \eqref{eq:MGprob:initial} 
hold because of \eqref{eq:initial:cond:bd} and \eqref{eq:moment:esti}, respectively.
Let $\usZ_T(X):=\ZE_{\E^{-2}T}(\E^{-1}X)=\calZE_{\beta^{-1}T}((\beta')^{-1}X)$.
By the change of variables $(T,X)\mapsto(\beta^{-1}T,(\beta')^{-1}X)$, 
it suffices to show that
\begin{align}
	\label{eq:MGprob:N}
	N_T(\phi)
&:=
	(\phi,\usZ_T) - (\phi,\usZ_0)
	- \frac{\alpha}{2} \int_0^T \BK{\phi'',\usZ_S} dS,
	\\
	\label{eq:MGprob:Lambda}
	\Lambda_T(\phi)
&:=
	 N_T(\phi)^2
	 - \alpha\lambda^2 \int_0^T (\phi^2,\usZ_S^2) dS,
\end{align}
are local martingales.
Let $a$ be an arbitrary positive number, $0\le S \le S'\le\tilT$,
and let $f:D([0,S],C(\bbR))\to\bbR$ be bounded and continuous with respect to the Skorokhod topology.
Associate with any process $A_T$ with the stopped process
\begin{align*}
	A^a_T:= A^a_{T\wedge T_a},
	\quad
	T_a := \inf\curBK{T\geq 0: |A_T|>a}.
\end{align*}
It suffices to prove
\begin{equation*}
	\bbE\BK{ (N_{S'}(\phi)^a - N_S(\phi)^a) f(N_\cdot) } =0,
\quad
	\bbE\BK{ (\Lambda_{S'}(\phi)^a - \Lambda_S(\phi)^a) f(N_\cdot) }
	=0.
\end{equation*}
To this end, set $(\phi,\ZE_t)_\E:=\E\sum_x \phi(\E X)\ZE_t(x)$, 
$\bfL^\E:=\sum_{k=1}^m \tilrE_k \Delta_k$, and 
\begin{align*}
	\NE_T(\phi)
	&:=
	(\phi, \ZE_{\E^{-2}T})_\E - (\phi,\ZE_0)_\E
	-
	\frac{1}{2} \int_0^{\E^{-2}T} \BK{ \bfL^\E\phi, \ZE_s }_\E ds,
\\
	\LambdaE_T(\phi)
	&:=
	\NE_T(\phi)^2
	- \E^2 \alpha\lambda^2 \int_0^{\E^{-2}T} \BK{ \phi^2, (\ZE_s)^2 }_\E ds. 
\end{align*}
By the definition of weak convergence,
using $\sum_{k=1}^m k^2\tilrE_k\to \alpha$ (by \eqref{eq:tilr:approx} and \eqref{eq:alpha:defn}) and 
using localization by $T_a$ to guarantee the boundedness of
$\NE_S(\phi)^a$ and $\LambdaE_S(\phi)^a$, we get
\begin{align}\label{eq:prop:main:SHE:0}
\begin{split}
	\bbE\BK{ (N_{S'}(\phi)^a - N_S(\phi)^a) f(N_\cdot) }
	&=
	\lim_{\E\to 0}
	\bbE\BK{ (\NE_{S'}(\phi)^a - \NE_S(\phi)^a) f(\NE_\cdot) },
\\
	\bbE\BK{ (\Lambda_{S'}(\phi)^a - \Lambda_S(\phi)^a) f(N_\cdot) }
	&=
	\lim_{\E\to 0}
	\bbE\BK{ (\LambdaE_{S'}(\phi)^a - \LambdaE_S(\phi)^a) f(\NE_\cdot) }.
\end{split}
\end{align}

We next show that $\NE_{S}(\phi)$ and $\LambdaE_S(\phi)$ are approximated by
the martingales
\begin{align*}
	\tilNE_t :=
	\int_0^t \BK{\phi,\ZE_s d\ME_s}_\E ds,
\quad
	(\tilNE_t)^2 - \angleBK{\tilNE}_t,
\end{align*}
respectively, and that the RHS of \eqref{eq:prop:main:SHE:0} are zero.
To this end, we integrate \eqref{eq:discr:SHE} to get
\begin{equation}\label{eq:prop:main:SHE:0:1}
	\NE_{\E^{-2}t}(\phi) 
	- \tilNE_t
	=
	\E^2 \int_0^t \BK{\phi,\calE_s}_\E ds 
	+ \sum_{|k|\le m} \E \int_0^t \BK{\nabla_k\phi,\calE^k_s}_\E ds.
\end{equation}
Let  $\NE_1$ and $\NE_2$ denote the 
first and second terms on the RHS of \eqref{eq:prop:main:SHE:0:1}, respectively.
By Taylor-expanding $\nabla_k\phi$ to the first order, we rewrite $\NE_2$ as the sum of
\begin{align*}
 	\NE_{21} 
 	:= 
 	\sum_{|k|\le m} \E^2 \int_0^t \BK{k\phi',\calE^k_s\ZE_s}_\E ds
\quad
\text{and}
\quad
 	\NE_{22} 
 	:= 
 	\E^3 \int_0^t \sum_{|k|\le m} \BK{\phiE_k,\calE^k_s\ZE_s}_\E ds,	
\end{align*}
where $\phiE_k(x)$ is a bounded (in $x$ and $\E$) function.
By Proposition \ref{prop:QVexpression},
\begin{align*}
	\angleBK{\tilNE}_t =
	\E^2 \int_0^t \sum_x \sum_{|l|<m} \phi(x) \phi(x+l)
	\lambda^2 \BK{\alpha_l + \calE_s(x)} \ZE_s(x)^2 ds.
\end{align*}
By $\phi(x)\phi(x+l)=\phi(x)^2 + O(\E l)$
and $\sum_{|l|<m} \alpha_l = \alpha$ (by \eqref{eq:alpha:defn} and \eqref{eq:alphal:defn}),
we further write $\tilNE_t$ as the sum of
\begin{align*}
	\tilLambdaE &:=
	\alpha \lambda^2 \E^2 \int_0^t \BK{ \phi^2, (\ZE_s)^2 } ds,
\quad
	\LambdaE_1 := 
	\E^2 \int_0^t \BK{ \phi^2, \calE_s\ZE_s}_\E ds,
\\
	\LambdaE_2 &:= 
	\E^3 \int_0^t 
	\BK{ \psiE, \calE_s\ZE_s}_\E ds,
\end{align*}
where $\psiE(x)$ is a bounded (in $x$ and $\E$).
Hence
\begin{align}\label{eq:prop:main:SHE:2}
	\LambdaE_{\E^2t}(\phi)
	= 
	\BK{ \tilNE_t + (\NE_1)_t + (\NE_2)_t }^2 - \tilLambdaE_t
	=
	(\tilNE_t)^2 - \langle\tilNE\rangle_t + \RE_t,
\end{align}
where
\begin{align*}
	\RE_t:=
	 (\LambdaE_1)_t + (\LambdaE_2)_t
	+ 2 \BK{ (\NE_1)_t+(\NE_2)_t } \tilNE_t
	+ \BK{ (\NE_1)_t+(\NE_2)_t }^2.
\end{align*}
The proof is completed upon showing
\begin{align}\label{eq:prop:main:SHE:5}
	\lim_{\E\to 0} \Vert (\NE_1)_t\Vert_2 =0,
	\quad
	\lim_{\E\to 0} \Vert(\NE_2)_t\Vert_2 = 0,
	\quad
	\lim_{\E\to 0} \Vert \RE_t\Vert_1 = 0.
\end{align}
Since $\phi^\E$ and $\psi^\E$ are bounded, 
by \eqref{eq:moment:esti} we have
\begin{align}
	&
	\label{eq:prop:main:SHE:1}
	\Vert (\NE_1)_t\Vert_4, \
	\Vert((\NE_{21})_t\Vert_4, \
	\Vert (\LambdaE_1)_t\Vert_4
	\le C,
\\
	&
	\label{eq:prop:main:SHE:3}
	\Vert (\tilNE_t)^2 \Vert_2 \le C,
	\quad
	\Vert (\NE_{22})_t \Vert_2 
	\le C \E,
	\quad
	\Vert (\LambdaE_{2})_t \Vert_2
	\le C \E.
\end{align}
The bound \eqref{eq:prop:main:SHE:1} implies the uniformly integrability of
$\{(\NE_1)_t^2\}_\E$, $\{(\NE_{21})_t^2\}_\E$, and $\{(\LambdaE_1)^2_t\}_\E$, 
which together with Lemma \ref{lem:pre:replace}
implies 
\begin{align}\label{eq:prop:main:SHE:4}
	\lim_{\E\to 0} \Vert (\NE_1)_t\Vert_2 =0,
	\quad
	\lim_{\E\to 0} \Vert(\NE_{21})_t\Vert_2 = 0,
	\quad
	\lim_{\E\to 0} \Vert (\LambdaE_1)_t \Vert_1 = 0.
\end{align}
Combining \eqref{eq:prop:main:SHE:3} and \eqref{eq:prop:main:SHE:4} we prove \eqref{eq:prop:main:SHE:5}.
\end{proof}

\section{Replacement lemma}
\label{sec:replace:lem}
We first recall some basic notions of continuous time Markov processes associated with exclusion processes.

As mentioned in Remark \ref{rmk:vanishing},
for any $a\in[0,1]$, the product measure $\nu_a$ 
is an invariant measure of exclusion process.
Let $\calD:=\curBK{\pm 1}^\halfZ$ be quipped with the corresponding cylindrical $\sigma$-algebra $\calG_\infty$
and the probability measure $\nu:=\nu_\frac12$,
let $\Lambda_n:=(-n,n)\cap\halfZ$ be the $n$-th interval around $0$,
and let $\{\calG_n\}$ be the filtration corresponding to the restriction to $\curBK{\pm 1}^{\Lambda_n}$ 
of functions $g:\calD\to\bbR$.  
For a function $g:\calD\to\bbR$ define 
\begin{align*}
	(\sigma_{y,y+k}g)(\eta) := g(\eta^{y,y+k}),
\quad
	\eta^{y_1,y_2}(y)
	:=
	\left\{\begin{array}{l@{,}l}
		\eta(y_2)	&\text{ when } y=y_1,	\\
		\eta(y_1)	&\text{ when } y=y_2, \\
		\eta(y)		&\text{ otherwise,}
	\end{array}\right.
\end{align*}
Recall from \eqref{eq:a:indicator:defn}, $a_{y_1\to y_2}$ is the indicator function for allowed hops. Let
\begin{align}
	\label{eq:c:defn}
	\cE_{y_1,y_2} &:= 
	\BK{ \qE_{y_2-y_1} a_{y_1\to y_2} + \qE_{y_1-y_2} a_{y_2\to y_1} } \bbbone_\curBK{0<|y_1-y_2|\le m},
\\
	\label{eq:c:infty:defn}
	c^\infty_{y_1,y_2} &:=
	\lim_{\E\to 0} \cE_{y_1,y_2}
	= r_{|y_1-y_2|} \BK{ a_{y_1\to y_2} + a_{y_2\to y_1} } \bbbone_\curBK{0<|y_1-y_2|\le m}.
\end{align}
The Markov generator of the exclusion process is 
\begin{align}
	\label{eq:L:gentr:defn}
	\LE &:= \sum_{y_1<y_2} \LE_{y_1,y_2},
	\quad
	\LE_{y_1,y_2} g := \cE_{y_1,y_2} \BK{ (\sigma_{y_1,y_2}g) - g }.
\end{align}
Let $\mu_\tE$ denote the law of the exclusion process on $\calD$ at time $t$, 
and $f_\tE:=\frac{d\mu_\tE}{d\nu}$.
Recall that a function $g_t(\eta):\calD\times[0,\infty)\to \bbR$ (respectively $g:\calD\to\bbR$ ) is cylinder 
if there exists $n$ such that for all $t$, $g_t\in\calG_n$ (respectively $g\in\calG_n$).
By the forward Kolmogorov equation, for any cylinder $g_t$,
\begin{equation}\label{eq:Kolm:eq}
	\frac{d~}{dt} \bbE_\nu \BK{f_\tE g_t} =
	\bbE_\nu \BK{f_\tE \, \partial_tg_t} 
	+ \bbE_\nu \BK{f_\tE \, \LE g_t}.
\end{equation}
For cylinder $g$, define
\begin{align}
	\label{eq:Diri:Dyy:defn}
	\DE_{y_1,y_2}(g) 
	&:= 
	\frac12 \bbE_\nu \BK{ \cE_{y_1,y_2} ((\sigma_{y_1,y_2}g)^\frac12 - g^\frac12)^2 },
\\
	\label{eq:Diri:infty:Dyy:defn}
	D^\infty_{y_1,y_2}(g) 
	&:= 
	\frac12 \bbE_\nu \BK{ c^\infty_{y_1,y_2} ((\sigma_{y_1,y_2}g)^\frac12 - g^\frac12)^2 },
\\
	\label{eq:Diri:defn}
	\DE(g)
	&:= 
	\sum_{y_1<y_2} \DE_{y_1,y_2}(g).
\end{align}
Note that the sum \eqref{eq:Diri:defn} is finite since $g$ is cylinder.
For each $y_1,y_2$, the Dirichlet forms \eqref{eq:Diri:Dyy:defn} and \eqref{eq:Diri:infty:Dyy:defn}
are a convex and lower-semicontinuous function of $g$ 
(see \cite[Theorem A.1.10.2]{kipnis99} and \cite[Corollary A.1.10.3]{kipnis99}).
We have the identity (see \cite[Theorem A.1.9.]{kipnis99})
\begin{equation}\label{eq:Dirichelet:formula}
	\bbE_\nu \BK{ g^\frac12 \, \LE g^\frac12 } 
	= -\DE(g).
\end{equation}
For $0<|y_1-y_2|\le m$,
since $c^\infty_{y_1,y_2}+c^\infty_{y_2,y_1}=r_{|y_1-y_2|}(a_{y_1\to y_2}+a_{y_2\to y_1})$
and $\sigma_{y_1,y_2}g=g$ unless the hop $y_1\to y_2$ or the hop $y_2\to y_1$ is allowed,
we have
\begin{align}\label{eq:Diri:infty:Dyy:variant}
	D^\infty_{y_1,y_2}(g)  
	= 
	r_{|y_1-y_2|} \bbE_\nu \BK{((\sigma_{y_1,y_2}g)^\frac12 - g^\frac12)^2}.
\end{align}
Since $\lim_{\E\to 0} \qE_k = r_{|k|}>0$, for all $\E$ small enough we have 
\begin{align}\label{eq:Diri<Diri}
	2 D^\infty_{y_1,y_2}(g) \le \DE_{y_1,y_2}(g).
\end{align}
Let $g^n:= \bbE_\nu(g|\scrG_n)$.
For a probability density function $g$ define the $n$-th entropy as 
$H^n_t(g):=\bbE_\nu (g^{n} \log(g^n))$.
Let $\calD_n:=\curBK{\pm 1}^{\Lambda_n}$.
Since $\#\calD_n=2^{2n}$, we have the crude bounds
\begin{align}
	\label{eq:H:crude:bound}
	H^n_t(g) &\le 2n \log 2,
\\
	\label{eq:D:curde:bound}
	\DE(g^n) &\le C e^{nC}.
\end{align}

Next we gives a bound on the Dirichlet form:

\begin{lemma}\label{lem:Dirichlet:form}
We have the estimate 
\begin{align}\label{eq:Dirichlet:esti}
	\DE \BK{ t^{-1}\int_0^t f^n_{t,\E} ds } \le C + Ct^{-1} n.
\end{align}
\end{lemma}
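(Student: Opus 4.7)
The plan is to use the classical entropy-production argument for a replacement lemma. The $Cn/t$ piece of the bound will come from the crude initial-entropy bound \eqref{eq:H:crude:bound}, and the additive constant $C$ will arise from the $O(m)$ bonds of $\LE$ that straddle $\partial\Lambda_n$, a boundary effect specific to $\bbZ$.

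\textbf{Step 1 (differentiate the entropy).} Let $H^n_t := \bbE_\nu(f^n_\tE \log f^n_\tE)$. Applying \eqref{eq:Kolm:eq} with the $\calG_n$-measurable choice $g_t = \log f^n_\tE$, noting that $\bbE_\nu(f_\tE \log f^n_\tE) = \bbE_\nu(f^n_\tE \log f^n_\tE) = H^n_t$ by conditioning, and using $\bbE_\nu(\partial_t f^n_\tE) = 0$ to kill the $\partial_t g_t$ contribution, I obtain
\[
\partial_t H^n_t = \bbE_\nu\bigl(f_\tE\, \LE \log f^n_\tE\bigr).
\]

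\textbf{Step 2 (bond-by-bond entropy dissipation).} Decompose $\LE = \LE^{\text{int}} + \LE^{\partial} + \LE^{\text{ext}}$, where the three pieces collect bonds $\{y_1, y_2\}$ with both endpoints inside $\Lambda_n$, straddling $\partial\Lambda_n$, or both outside $\Lambda_n$, respectively. The exterior piece annihilates $\log f^n_\tE$. The interior piece is $\calG_n$-measurable, so conditioning replaces $f_\tE$ by $f^n_\tE$; then the pointwise inequality $(a - b)(\log a - \log b) \ge c(\sqrt{a} - \sqrt{b})^2$ for some fixed $c > 0$, applied after the measure-preserving swap $\sigma_{y_1,y_2}$ to symmetrize the sum, yields
\[
\bbE_\nu\bigl(f^n_\tE\, \LE^{\text{int}} \log f^n_\tE\bigr) \le -c\, \DE^{\text{int}}(f^n_\tE),
\]
where $\DE^{\text{int}}$ is $\DE$ restricted to interior bonds. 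For the $O(m)$ boundary bonds in $\LE^{\partial}$, I would condition on $\calG_{n+m}$ and apply the Young-type entropy inequality $a \log b \le b + a\log a - a$ (with $a = f^{n+m}_\tE$, $b = \sigma_{y_1,y_2}f^n_\tE$) together with $\bbE_\nu(f^{n+m}_\tE) = 1$ and boundedness of $\cE_{y_1,y_2}$ to conclude $|\bbE_\nu(f_\tE\, \LE^{\partial} \log f^n_\tE)| \le C$ uniformly in $n$ and $t$.

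\textbf{Step 3 (integrate in time and apply convexity).} Combining Step 2 yields $\partial_t H^n_t \le -c\, \DE^{\text{int}}(f^n_\tE) + C$. Integrating over $[0, t]$ and using $H^n_t \ge 0$ together with the crude bound \eqref{eq:H:crude:bound}, $H^n_0(f_{0,\E}) \le 2n\log 2$, gives
\[
c \int_0^t \DE^{\text{int}}(f^n_\sE)\, ds \le 2n\log 2 + C t.
\]
Each $\DE_{y_1,y_2}$ is convex (\cite[Corollary A.1.10.3]{kipnis99}), so
\[
\DE\BK{t^{-1} \int_0^t f^n_\sE\, ds} \le t^{-1} \int_0^t \DE(f^n_\sE)\, ds = t^{-1} \int_0^t \bigl(\DE^{\text{int}}(f^n_\sE) + \DE^{\partial}(f^n_\sE)\bigr)\, ds.
\]
The interior integral contributes at most $Cn/t + C$ by the previous display. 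Each boundary term $\DE^{\partial}_{y_1,y_2}(f^n_\sE)$ is uniformly bounded via $(\sqrt{a} - \sqrt{b})^2 \le a + b$, boundedness of $\cE_{y_1,y_2}$, and $\bbE_\nu(f^n_\sE) = 1$; summing the $O(m)$ boundary bonds gives $\DE^{\partial}(f^n_\sE) \le C$ uniformly. Adding these contributions concludes \eqref{eq:Dirichlet:esti}.

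\textbf{Main obstacle.} The delicate step is the uniform boundary bound in Step 2. On $\bbZ$, unlike on a torus, $\LE$ fails to preserve $\calG_n$, and the integrand $f_\tE(\log \sigma_{y_1,y_2} f^n_\tE - \log f^n_\tE)$ for a boundary bond involves the potentially unbounded $\log f^n_\tE$. A naive bound via $\VertBK{\log f^n_\tE}_\infty$ would grow in $n$; the Young-type entropy inequality is needed to keep the contribution uniform. This is precisely what produces the additive constant $C$ in \eqref{eq:Dirichlet:esti}, absent in the torus case treated in \cite{kipnis89a}.
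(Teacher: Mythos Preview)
Your overall strategy---differentiate the finite-box entropy, split the generator into interior/boundary/exterior bonds, extract $-c\,\DE^{\text{int}}(f^n_\tE)$ from the interior, bound the boundary uniformly, then integrate and apply convexity---is the right architecture. Step~1 and the convexity conclusion in Step~3 are fine, and the observation that each boundary Dirichlet term $\DE^\partial_{y_1,y_2}(f^n_\sE)\le 1$ via $(\sqrt a-\sqrt b)^2\le a+b$ is correct.

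The gap is in Step~2, at the boundary estimate. Your application of $a\log b \le b + a\log a - a$ with $a=f^{n+m}_\tE$ and $b=\sigma_{y_1,y_2}f^n_\tE$ controls only $f^{n+m}\log\sigma f^n$; you still need a lower bound on $f^{n+m}\log f^n$. Either way you complete it, an $\bbE_\nu[f^{n+m}\log f^{n+m}]=H^{n+m}_t$ term appears, and $H^{n+m}_t\le 2(n+m)\log 2$ is $O(n)$, not $O(1)$. The ingredients you list ($\bbE_\nu f^{n+m}=1$ and $|\cE|\le 1$) do not suffice to kill this growth. What is missing is the pointwise ratio bound $f^{n+m}_\tE/f^n_\tE\le 2^{2m}$ (since the conditional expectation of the ratio given $\calG_n$ equals~$1$ and there are only $2^{2m}$ configurations on $\Lambda_{n+m}\setminus\Lambda_n$). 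With that bound one can instead use $\log x\le x-1$ on the ratio $\sigma f^n/f^n$ and get
\[
\bbE_\nu\bigl[c^\E f^{n+m}\log(\sigma f^n/f^n)\bigr]
\le \bbE_\nu\bigl[c^\E (f^{n+m}/f^n)\,\sigma f^n\bigr]
\le 2^{2m}\,\bbE_\nu[\sigma f^n]=2^{2m},
\]
which is the uniform constant you claim.

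For comparison, the paper does not try to bound the boundary contribution by a constant outright. It uses the inequality $\log(b/a)\le 2a^{-1/2}(\sqrt b-\sqrt a)$ on \emph{all} bonds (so the identity $\bbE_\nu[g^{1/2}\LE g^{1/2}]=-\DE(g)$ gives the full $-2\DE(f^n_\tE)$ without splitting interior/boundary), and for the residual boundary piece $D'$ applies $ab\le a^2/(2R)+Rb^2/2$ together with the same ratio bound to obtain $\partial_t H^n_t\le -2\DE(f^n_\tE)+\tfrac{1}{2R}\DE(f^{n+m}_\tE)+C(R)$. The appearance of $\DE(f^{n+m}_\tE)$ on the right then forces the paper to run the argument at every scale $n+j$ simultaneously and take a weighted sum $\sum_j e^{-bj}$ to close. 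If you insert the ratio bound correctly into your Step~2, your route would actually be shorter than the paper's, avoiding that weighted-sum trick entirely.
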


\begin{remark}\label{rmk:replace}
For exclusion processes on the torus $\bbZ/N\bbZ$,
one get the bound $C t^{-1}n$ instead of \eqref{eq:Dirichlet:esti}.
We get the extra constant from the boundary effect of $\Lambda_n$.
Since $t^{-1}n=O(\E)$ under the scaling $n=\E^{-1}R$, $t=\E^{-2}T$,
this extra constant aggravates \eqref{eq:Dirichlet:esti}.
Consequently, in Lemma \ref{lem:replace}
the radius of averaging we obtain is of the mesoscopic scale $\E^{-\frac12}$,
not the macroscopic scale $\E^{-1}$,
which is the scale of standard replacement lemmas.
While a more careful analysis might remove the extra constant,
\eqref{eq:Dirichlet:esti} and Lemma \ref{lem:replace} 
suffice for proving Lemma \ref{lem:pre:replace}.
\end{remark}

\begin{proof}[Proof of Proposition \ref{lem:Dirichlet:form}]
Take the time derivative of $H^n_t(f_\tE)$ using \eqref{eq:Kolm:eq} to get
\begin{equation*}
	\frac{d~}{dt} H^n_t(f_\tE)
	=
	\bbE_\nu \BK{ f_\tE (\partial_t f^n_\tE/ f^n_\tE) }
	+
	\bbE_\nu \BK{ f_\tE \LE \BK{ \log f^n_\tE } }.
\end{equation*}
By the tower property of $\sigma$-algebras,
the first expectation is $\bbE_\nu \BK{ \partial_t f^n_\tE } = d1/dt =0$.
Applying the definition \eqref{eq:L:gentr:defn} of $\LE$
and the inequality $\log(b/a) \le 2a^{-\frac12}(b^\frac12-a^\frac12)$
(which holds for all non-negative $a,b$),
we obtain
\begin{align*}
	\frac{d~}{dt}H^n_t(f_\tE)
\leq
	\sum_{y_1<y_2 }
	\bbE_\nu
	\sqBK{
		2 \cE_{y_1,y_2} f^{n+m}_\tE \BK{f^n_\tE}^{-\frac12}
		\BK{ \BK{\sigma_{y_1,y_2}f^n_{t,\E}}^\frac12 - \BK{\fntE}^\frac12 }
		}.
\end{align*}
Write $f^{n+m}_\tE \BK{f^n_\tE}^{-\frac12}$ as the sum of 
$\BK{f^n_\tE}^\frac12$ and $\BK{f^{n+m}_\tE-f^n_\tE} \BK{f^n_\tE}^{-\frac12}$.
Using \eqref{eq:Dirichelet:formula}, we have
\begin{equation}\label{eq:lem:Dirichlet:form:1}
	\frac{d~}{dt}H^n_t(f_\tE) 
	\leq
	- 2 \DE(f^n_\tE) + D',
\end{equation}
where
\begin{align}\label{eq:lem:Dirichlet:form:D'}
	D'
:=
	2\sum_{y_1<y_2}
	\bbE_\nu \sqBK{
		\cE_{y_1,y_2} \BK{f^{n+m}_\tE-f^n_\tE} \BK{f^n_\tE}^{-\frac12} 
		\BK{ \BK{\sigma_{y_1,y_2}f^n_\tE}^\frac12 - \BK{f^n_\tE}^\frac12 }
	}.
\end{align}
Note that in \eqref{eq:lem:Dirichlet:form:D'} we need only to sum over
\begin{align}
	\partial\Lambda_n := \curBK{(y_1,y_2): y_1<y_2, \, |y_1-y_2|\le m, \text{ exactly one of } y_1,y_2 \in\Lambda_n }.
\end{align}
Indeed, when $y_1,y_2\in(\Lambda_n)^c$, we have $\sigma_{y_1,y_2}f^n_\tE - f^n_\tE=0$,
and when $y_1,y_2\in\Lambda_n$, by the tower property of $\sigma$-algebras 
we can replace $f^{n+m}_\tE$ in \eqref{eq:lem:Dirichlet:form:D'} by $f^{n}_\tE$.
Applying the inequality $ab\le (a^2R^{-1}+Rb^2)2^{-1}$, we further obtain, for any $R>0$,
\begin{equation}\label{eq:lem:Dirichlet:form:D':1}
	D'
\leq
	\frac{1}{2R} \sum_{(y_1,y_2)\in\partial\Lambda_n} \DE_{y_1,y_2}(f_\tE)
	+ D''
\leq
	\frac{1}{2R} \DE(f^{n+m}_\tE) + D'',
\end{equation}
where
\begin{align}\label{eq:lem:Dirichlet:form:D''}
	D'':=
	\frac{R}{2}
	\sum_{(y_1,y_2)\in\partial\Lambda_n} 
	\bbE_\nu \sqBK{ \cE_{y_1,y_2} 
	\BK{(\sigma_{y_1,y_2}f^n_\tE)^\frac12-(f^n_\tE)^\frac12}^2{f^n_\tE}^{-1} }.
\end{align}
Since $|c_{y_1,y_2}|\le 1$ and $(a-b)^2 b^{-1} \le 2(a^2+b^2)b^{-1}$,
the random variable in \eqref{eq:lem:Dirichlet:form:D''} is bounded by 
$2[(f^{n+m}_\tE/f^n_\tE)f^{n+m}_\tE + f^n_\tE]$.
Since $1=\bbE_\nu \sqBK{ \,f^{n+m}_\tE\!\big/f^n_\tE \, \middle|\scrG_n} \,(\eta)$ per $\eta$ 
is the equally weighted average of the $2^{2m}$ values that $\fnmtE/\fntE$ can take, 
it follows that $ \fnmtE/\fntE \le 2^{2m}$.
Hence $D'' \le C R (\# \partial\Lambda_n) \le C(R)$.
Combining this with \eqref{eq:lem:Dirichlet:form:1} and \eqref{eq:lem:Dirichlet:form:D':1},
we obtain 
\begin{equation}\label{eq:lem:Dirichlet:form:2}
	\frac{d~}{dt} H^n_t(f_\tE)
	+ 2\DE(\fntE) 
\leq
	\frac{1}{2R} \DE(f^{n+m}_\tE) + C(R).
\end{equation} 
Consider \eqref{eq:lem:Dirichlet:form:2} for $n=n+j$, $j\in\{0,1,\ldots\}$.
Integrating in time, multiplying by $e^{-bj}$, and summing over $j\in\{0,1,\ldots\}$,
we obtain
\begin{equation}\label{eq:lem:Dirichlet:form:3}
	- \sum_{j=0}^\infty e^{-bj} H^n_t(f_\tE) 
	+ 2 D'''_0
\leq
	\frac{e^{mb}}{2R} D'''_m + C(R) t,
\end{equation} 
where
\begin{align}\label{eq:lem:Dirichlet:form:4}
	D'''_k := \sum_{j=k}^\infty e^{-bj}\int_0^t \DE(f^{n+j}_\sE) ds.
\end{align}
Note that by \eqref{eq:H:crude:bound} and \eqref{eq:D:curde:bound},
the sums of \eqref{eq:lem:Dirichlet:form:3} and \eqref{eq:lem:Dirichlet:form:4} 
are finite for all large enough $b$.
Fix one such $b$, choose $R$ so that $e^{mb}(2R)^{-1} \le 1$,
yielding $2D'''_0 - e^{mb}(2R)^{-1} D'''_m \le D'''_0$.
By \eqref{eq:H:crude:bound} we have $\sum_{j=0}^\infty e^{-bj} H^n_t(f_\tE) \le C n$.
Thus, we obtain
\begin{equation*}
	\int_0^t \DE(f^{n}_\sE) ds 
\leq
	D'''_0
\leq
	C n + C t.
\end{equation*}
Finally, using the convexity of $\DE$ we conclude the proof.
\end{proof}

Recall that $\Psi:D\mapsto\bbR$ is Lipschitz if 
there exists $l$ and $C$ such that for all $\eta$ and $\xi$,
\begin{equation*}
	\absBK{ \Psi(\eta) - \Psi(\xi) } \le C \sum_{|y|< l} \absBK{ \eta(y) -\xi(y) }.
\end{equation*}
Let $\fint_s^t:=(t-s)^{-1}\int_s^t$ denote the average over the time interval $(s,t)$,
and for any $P\subset\bbZ$ or $P\subset \halfZ$, 
let $\avvsum_{P}:=(\# P)^{-1} \sum_P$ denote the average over $P$.
We have the readily verified inequality
\begin{align}\label{eq:double:av}
	\Bigg|
		\avsum_{|x|\le n} f(x) - \avsum_{|x'|\le j} \avsum_{|x|\le n} f(x+x')
	\Bigg|
\leq
	\frac{j}{n} \Bigg| \sum_{|x-n|\le j} f(x) + \sum_{|x+n|\le j} f(x) \Bigg|.
\end{align}
For $g:\calD\to\bbR$, put $(\tau_i g)(\eta):= g(\tau_i\eta)$, where $(\tau_i\eta)(x):=\eta(x+i)$.

Next we prove a replacement lemma that allows us 
to replace the microscopic average of $\Psi$ 
by a mesoscopic average.

\begin{lemma}\label{lem:replace}
For any Lipschitz cylinder function $\Psi$, any $T,R>0$, $T_0\geq 0$, $X_0\in\bbR$,
\begin{align}\label{eq:replace}
	\varlimsup_{\delta\to 0} \varlimsup_{\E\to 0}
	\bbE_\E \BK{ 
		\fint_{T_0}^{T_0+T} \avsum_{|x-\E^{-1}X_0|\le \E^{-1}R} 
		\tau_x V_{\deltaE}(\eta_{\E^{-2}S}) dS 
		}
	=0,
\end{align}
where
\begin{align}
	\label{eq:V:defn}
	V_n(\eta) &:= 
	\absBK{ \avsum_{|x|\le n} (\tau_x \Psi)(\eta) - \tilPsi\BK{\eta^n(0)} },
\\
	\label{eq:etal:defn}
	\eta^n(x) &:= \avsum_{|y-x|\le n} \eta(y),
\\
	\label{eq:Psi:til:defn}
	\tilPsi(a) &:= \bbE_{\nu_\frac{1+a}{2}}(\Psi),
	\text{ which is a function on } [0,1].
\end{align}
\end{lemma}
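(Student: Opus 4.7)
The plan is to apply the classical one-block and two-blocks estimates, adapted to the weaker Dirichlet form bound \eqref{eq:Dirichlet:esti} available on $\bbZ$. First I would rewrite the expectation in \eqref{eq:replace} using the tower property as
\begin{align*}
 \bbE_\E\Bigl(\fint_{T_0}^{T_0+T}\avsum_{|x-\E^{-1}X_0|\le \E^{-1}R}\tau_x V_{\delta\E^{-1/2}}(\eta_{\E^{-2}S})\,dS\Bigr) = \bbE_\nu\bigl(F^\E\,\bar f^\E\bigr),
\end{align*}
where $F^\E(\eta):=\avsum_{|x-\E^{-1}X_0|\le \E^{-1}R}\tau_x V_{\delta\E^{-1/2}}(\eta)$ and $\bar f^\E := T^{-1}\int_{T_0}^{T_0+T} f^n_{\E^{-2}S,\E}\,dS$ is the time-averaged $\nu$-density, with $n$ chosen just large enough that $F^\E$ is $\calG_n$-measurable (thus $n=O(\E^{-1})$). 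Lemma \ref{lem:Dirichlet:form} applied at $t=\E^{-2}T$ then yields $\DE(\bar f^\E)=O(1)$ uniformly in $\E$.

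Next, split $V_{\delta\E^{-1/2}}\le V'_l + W_l$, with
\begin{align*}
 V'_l(\eta) := \Bigl|\avsum_{|x|\le \delta\E^{-1/2}}(\tau_x\Psi)(\eta) - \avsum_{|x|\le \delta\E^{-1/2}}\tilPsi\bigl(\eta^l(x)\bigr)\Bigr|,
 \quad
 W_l(\eta) := \avsum_{|x|\le\delta\E^{-1/2}}\bigl|\tilPsi(\eta^l(x))-\tilPsi(\eta^{\delta\E^{-1/2}}(0))\bigr|.
\end{align*}
The one-block step controls $\bbE_\nu(\tau_x V'_l\,\bar f^\E)$: decompose $\bar f^\E$ via its conditional expectations on windows of radius $l+l_0$ (here $l_0$ denotes the cylinder support of $\Psi$), use the spectral gap of the finite-volume exclusion dynamics together with $\DE(\bar f^\E)=O(1)$ to show that these conditional densities are nearly measurable with respect to the local empirical density, and then identify their limit with $\tilPsi$ via equivalence of ensembles. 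The two-blocks step controls $\bbE_\nu(W_l\,\bar f^\E)$: by the Lipschitz property of $\tilPsi$ (inherited from $\Psi$) it reduces to bounding $\bbE_\nu\bigl(|\eta^l(x)-\eta^{\delta\E^{-1/2}}(0)|\,\bar f^\E\bigr)$, which vanishes by the standard nearest-neighbor coupling between the two density scales, again exploiting $\DE(\bar f^\E)=O(1)$. The inequality \eqref{eq:double:av} is convenient to bridge the plain spatial average appearing in $V_n$ with the nested averages that arise naturally in this decomposition.

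The main obstacle is the extra $O(1)$ constant on the right of \eqref{eq:Dirichlet:esti}, produced by the boundary flux of entropy on $\bbZ$ and absent from the torus bound of \cite{kipnis89a}. This is precisely what forces the mesoscopic averaging radius $\delta\E^{-1/2}$ in \eqref{eq:replace} rather than the usual macroscopic $\delta\E^{-1}$, and I must check that the spectral-gap and equivalence-of-ensembles estimates on blocks of radius up to $\delta\E^{-1/2}$ still produce errors $o_\E(1)+o_l(1)+o_\delta(1)$ under the iterated limit $\lim_{\delta\to 0}\lim_{\E\to 0}$. The order of limits is crucial: taking $\E\to 0$ first sends $\delta\E^{-1/2}\to\infty$, so the mesoscopic blocks are large and local laws approach product form, while the subsequent $\delta\to 0$ eliminates the residual Lipschitz fluctuations of $\tilPsi$ between density scales $l$ and $\delta\E^{-1/2}$.
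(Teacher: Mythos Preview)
Your proposal is correct and follows essentially the same route as the paper: rewrite the expectation against the time-averaged density, invoke the Dirichlet bound of Lemma~\ref{lem:Dirichlet:form}, and reduce $V_{\delta\E^{-1/2}}$ to a one-block piece plus a two-blocks piece, using \eqref{eq:double:av} to pass from the single mesoscopic average to the nested average needed for the one-block term (this is exactly the paper's decomposition $V^{(1)}+V^{(2)}+V^{(3)}$, with your $W_l=V^{(2)}$ and your $V'_l\le V^{(1)}+V^{(3)}$). The only technical difference is that the paper's one-block argument (Proposition~\ref{prop:oneBK}) extracts a limit density with zero Dirichlet form by compactness rather than invoking a spectral-gap bound, but either device yields the same conclusion here.
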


\begin{proof}

Since we impose no assumption on the initial condition, 
without lost of generality we assume $X_0=T_0=0$,
and let $\Psi$ depends only on coordinates of $\Lambda_{n_0}$.
Let
\begin{align}\label{eq:fNTE:defn}
	\fNTE:= \fint_0^{\E^{-2}T} f^{N_\E}_\tE dt,
	\quad
	N_\E:=\E^{-1}R + \delta \E^{-\frac12} + n_0.
\end{align}
We rewrite the expectation in \eqref{eq:replace} as
\begin{align}\label{eq:lem:replace:running:1}
	\bbE_\nu \Bigg[
	\avsum_{|x|\le \E^{-1}R} 
	\BK{\tau_x V_{\delta \E^{-1/2}}} \, \fNTE
	\Bigg].
\end{align}
We next use \eqref{eq:lem:replace:running:1} 
to reduce \eqref{eq:replace} to the one-block and two-blocks estimates.
In the definition \eqref{eq:V:defn} of $V_n$, add and subtract
$
	V^{(1)}
	:= 
	\avvsum_{|x|\le\delta\E^{-\frac12}} \tau_x V_l(\eta)
$
to get $V_{\deltaE} \le V^{(1)} + V^{(2)} + V^{(3)}$,
where
\begin{align}
	V^{(2)} &:= 
	\avsum_{|x|\le\delta\E^{-\frac12}} 
	\absBK{ \tilPsi(\eta^l(x)) - \tilPsi(\eta^{\delta\E^{-\frac12}}(0)) },
\\
	\label{eq:lem:replace:V3}
	V^{(3)} &:= 
	\Bigg|
		\hspace{7pt} 
		\avsum_{|x|\le \delta\E^{-\frac12}} \Psi(\tau_x\eta) 
		- \avsum_{|x|\le \delta\E^{-\frac12}}\avsum_{|x'|\le l} \Psi(\tau_{x+x'}\eta)
	\Bigg|.
\end{align}
By \cite[Corollary 2.3.6]{kipnis99} 
(which applies to any Lipschitz cylinder $\Psi$ and $\tilPsi$ as in \eqref{eq:Psi:til:defn} 
as long as $\nu_{a}\le\nu_{a'}$ for $a<a'$), $\tilPsi$ is Lipschitz,
yielding
\begin{align*}
	V^{(2)}
\leq
	C \avsum_{|x|\le\deltaE} \absBK{\eta^l(x)-\eta^{\delta\E^{-\frac12}}(0)}.
\end{align*}
By \eqref{eq:double:av}, 
we get $ V^{(3)} \le C \frac{l}{\delta\E^{-\frac12}} \sup_\eta |\Psi| \le Cl\delta^{-1}\E^\frac12$ 
(cylinder functions are bounded).
Therefore, 
\begin{align*}
	&
	\varlimsup_{\delta\to 0}\varlimsup_{\E\to 0}
	\bbE_\nu\Bigg[ 
		\avsum_{|x|\le \E^{-1}R} 
		\BK{\tau_x V_{\delta \E^{-1/2}}} \, \fNTE
	\Bigg]
\leq
	C
	\varlimsup_{l\to\infty} \varlimsup_{\E\to 0}
	\bbE_\nu\Bigg[
		\avsum_{|x|\le \E^{-1}(R+1)} \BK{\tau_x V_l} \, \fNTE
	\Bigg]
\\
	&
	\quad
	+
	C
	\varlimsup_{l\to\infty} \varlimsup_{\delta\to 0}\varlimsup_{\E\to 0}
	\sup_{|x'|\le \delta\E^{-\frac12}}
	\bbE_\nu \Bigg[ 
		\avsum_{|x|\le \E^{-1}R}
		\fNTE(\eta) 
		\absBK{ \eta^l(x+x') - \eta^{\deltaE}(x) }
	\Bigg].
\end{align*}
We thus reduce \eqref{eq:replace} to the following one-block estimate \eqref{eq:one:block} 
and two-blocks estimate \eqref{eq:two:block}.
\end{proof}

\begin{proposition}\label{prop:oneBK}
For any $R,T>0$,
\begin{align}
	\label{eq:one:block}
	&
	\varlimsup_{l\to\infty} \varlimsup_{\E\to 0}
	\bbE_\nu \Bigg[ 
		\avsum_{|x|\le \E^{-1}R} \BK{\tau_x V_l} \, \fNTE d\nu
	\Bigg]
	=0.
\end{align}
\end{proposition}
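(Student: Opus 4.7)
The plan is to follow the classical one-block estimate of \cite{kipnis99}, with adaptations for the lattice $\bbZ$ and for the weaker Dirichlet form bound \eqref{eq:Dirichlet:esti} (cf.\ Remark \ref{rmk:replace}). The outline has four steps.

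First, I would move the space average from $V_l$ onto the density by translation invariance of $\nu$, writing
\begin{equation*}
	\bbE_\nu \Big[ \avsum_{|x|\le \E^{-1}R} \BK{\tau_x V_l}\, \fNTE \Big]
	= \bbE_\nu \Big[ V_l \cdot \barflTE \Big],
	\quad
	\barflTE := \avsum_{|x|\le \E^{-1}R} \tau_{-x}\fNTE.
\end{equation*}
Since $V_l$ depends only on coordinates in $\Lambda_{l+n_0}$, only the conditional expectation $g^\E:=\bbE_\nu(\barflTE\mid\scrG_{l+n_0})$ matters. Because $\DE$ is convex and translation invariant, applying Lemma \ref{lem:Dirichlet:form} at level $n=N_\E$ yields $\DE(g^\E)\le C + C(\E^{-2}T)^{-1}N_\E = O(1)$ as $\E\to 0$. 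Thus $g^\E$ is a probability density on $\calD_{l+n_0}$ with a uniformly bounded Dirichlet form, and by \eqref{eq:Diri<Diri} the same is true of the $\infty$-version $D^\infty(g^\E)$.

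Second, I would decompose $g^\E$ by the number of particles in $\Lambda_{l+n_0}$. Setting $k(\eta):=\#\{y\in\Lambda_{l+n_0}:\eta(y)=+1\}$ and letting $\nu_{l+n_0,k}$ be the uniform canonical ensemble on $\{k(\eta)=k\}$, one writes $g^\E = \sum_k \alphaE_k\, g^\E_k$ with each $g^\E_k$ a density with respect to $\nu_{l+n_0,k}$. The hyperplane-wise $\infty$-Dirichlet form of each piece is controlled by $D^\infty(g^\E)/\alphaE_k$. Third, on the finite state space $\calD_{l+n_0}$ I would invoke lower semicontinuity of the Dirichlet form together with the irreducibility of the exclusion dynamics on each hyperplane $\{k(\eta)=k\}$: any weak subsequential limit of $g^\E_k$ with vanishing $\infty$-Dirichlet form must coincide with $\nu_{l+n_0,k}$ itself. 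Under $\nu_{l+n_0,k}$ the empirical density $\eta^l(0)$ equals $(k-(l+n_0))/(l+n_0)$ up to a boundary error $O(n_0/l)$, and the equivalence of ensembles (using that $\tilPsi$ is Lipschitz by \cite[Corollary~2.3.6]{kipnis99}) gives
\begin{equation*}
	\lim_{l\to\infty} \sup_k \bbE_{\nu_{l+n_0,k}}(V_l) = 0.
\end{equation*}

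Fourth, combining the mixture decomposition with the uniform bound on $V_l$ (cylinder functions are bounded) to handle tightness in $k$, one deduces $\bbE_\nu[V_l\, g^\E]\to 0$ as $\E\to 0$ followed by $l\to\infty$, which is \eqref{eq:one:block}. The main obstacle is the passage from ``bounded Dirichlet form'' to ``close to the canonical ensemble'' in step three: the bound \eqref{eq:Dirichlet:esti} only guarantees $\DE(g^\E)=O(1)$ rather than $o(1)$, so one must keep $l$ fixed while sending $\E\to 0$, extract a weak limit on the finite space $\calD_{l+n_0}$, argue via lower semicontinuity that this limit has \emph{zero} Dirichlet form on each hyperplane after renormalising by $\alphaE_k$, and only afterwards send $l\to\infty$. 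This explains the $\varlimsup_{l\to\infty}\varlimsup_{\E\to 0}$ order in the statement.
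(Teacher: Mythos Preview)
Your overall architecture matches the paper's proof: move the spatial average onto the density by translation invariance, project to $\scrG_{l+n_0}$, pass to a weak subsequential limit on the finite space $\calD_{l+n_0}$, show the limit is exchangeable (hence a mixture of canonical ensembles), and finish via equivalence of ensembles. The equivalence-of-ensembles step and the use of the Lipschitz property of $\tilPsi$ are also handled the same way.

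There is, however, a genuine gap in your Dirichlet-form step. You write that convexity and Lemma~\ref{lem:Dirichlet:form} give $\DE(g^\E)=O(1)$, then in your ``main obstacle'' paragraph you try to pass from $O(1)$ to zero Dirichlet form via lower semicontinuity of the weak limit. That does not work: lower semicontinuity applied to a sequence with merely bounded Dirichlet form yields a limit whose Dirichlet form is at most the bound, not zero. Your suggested ``renormalising by $\alphaE_k$'' only makes matters worse, since dividing by a weight $\alphaE_k\le 1$ enlarges the per-hyperplane form.

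What you are missing is the \emph{spatial-averaging gain}. The restricted form $D^\infty_l$ involves only the $O(l)$ bonds inside $\Lambda_l$; when you average $\tau_x\fNTE$ over the $O(\E^{-1})$ sites $|x|\le\E^{-1}R$, each bond of the full Dirichlet form $\DE(\fNTE)$ is hit by at most $O(l)$ translates. Thus
\[
	D^\infty_l(\barf^l_{T,\E})
	\;\le\;
	\avsum_{|x|\le\E^{-1}R} D^\infty_l(\tau_x\fNTE)
	\;\le\;
	C\,l\,\E\,\DE(\fNTE)
	\;\le\;
	C\,l\,\E,
\]
where the last inequality is Lemma~\ref{lem:Dirichlet:form}. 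This is the paper's \eqref{eq:prop:oneBK:Dirichlet:bd}. Now the restricted Dirichlet form is $o(1)$ as $\E\to 0$ for fixed $l$, lower semicontinuity legitimately gives $D^\infty_l(f^l_\infty)=0$, and the rest of your argument goes through. Without this gain, the one-block estimate would fail on $\bbZ$ precisely because of the boundary term in \eqref{eq:Dirichlet:esti} that you correctly flagged.
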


\begin{proposition}\label{prop:twoBK}
For any $R,T>0$,
\begin{align}
	\label{eq:two:block}
	\varlimsup_{l\to\infty} \varlimsup_{\delta\to 0}\varlimsup_{\E\to 0}
	\sup_{ |x'|\le \delta \E^{-\frac12} }
	\bbE_\nu \Bigg[
		\avsum_{|x|\le \E^{-1}R} 
		\fNTE
		\absBK{ \eta^l(x+x') - \eta^{\E^{-1/2}\delta}(x) }
	\Bigg]
	=0.
\end{align}
\end{proposition}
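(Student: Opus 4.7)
My plan is to follow the classical two-blocks strategy of Guo--Papanicolaou--Varadhan, as presented in \cite[Chapter 5]{kipnis99}, adapted to accommodate the weaker Dirichlet form bound of Lemma \ref{lem:Dirichlet:form}. I begin by writing the expectation in \eqref{eq:two:block} as $\bbE_\nu\BK{F^\E_{x'}\,\fNTE}$, where $F^\E_{x'}(\eta):=\avsum_{|x|\le\E^{-1}R}|\eta^l(x+x')-\eta^{\deltaE}(x)|$. Since $F^\E_{x'}$ is $\calG_{N_\E}$-measurable, the tower property allows me to replace $\fNTE$ by its $\calG_{N_\E}$-projection, whose Dirichlet form is bounded via Lemma \ref{lem:Dirichlet:form} by $C+C\E^2T^{-1}N_\E=O(1)$ under the scaling $t=\E^{-2}T$ and $N_\E=\E^{-1}R+\deltaE+n_0$. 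By \eqref{eq:Diri<Diri} the symmetric Dirichlet form $D^\infty$ is also $O(1)$, reducing the problem to a static variational bound on $\bbE_\nu(F^\E_{x'}h)$ over densities $h$ with $D^\infty(h)=O(1)$.

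Second, I insert an intermediate block of the same size $l$ via the triangle inequality
\[ |\eta^l(x+x')-\eta^{\deltaE}(x)| \le |\eta^l(x+x')-\eta^l(x)| + |\eta^l(x)-\eta^{\deltaE}(x)|. \]
The second term is absorbed by the one-block estimate of Proposition \ref{prop:oneBK} applied with $\tilPsi(a)=a$. For the first term, I tile $\curBK{|x|\le\E^{-1}R}$ by disjoint windows of length $\sim\deltaE$; since $|x'|\le\deltaE$, within each window $W$ both $\eta^l(x+x')$ and $\eta^l(x)$ are measurable with respect to an enlarged box $W^+$ of length $O(\deltaE)$. Combining the $O(n^{-2})$ spectral gap of symmetric exclusion on $\Lambda_n$ with $n=O(\deltaE)$, the Cauchy--Schwarz inequality applied across windows, and the superadditivity bound $\sum_W D^\infty_{W^+}(h)\le C\,D^\infty(h)=O(1)$, a standard Rayleigh-type computation (as in \cite[Chapter 5]{kipnis99}) shows that the first term is $o_\delta(1)$ as $\E\to 0$. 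The final passage $l\to\infty$ is covered by the equivalence of ensembles, which identifies the canonical mean of $\eta^l$ inside each mesoscopic window with $\eta^{\deltaE}$.

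The chief obstacle is the additional stand-alone constant in $\DE(\fNTE)\le C+Ct^{-1}N_\E$ noted in Remark \ref{rmk:replace}: this boundary-entropy contribution caps the effective averaging scale at the mesoscopic $\deltaE$ rather than the macroscopic $\E^{-1}$, which is precisely why $N_\E$ in \eqref{eq:fNTE:defn} is set at mesoscopic order. Consequently the three limits must be taken in the exact order $\E\to 0$, $\delta\to 0$, $l\to\infty$: the first installs the $O(1)$ Dirichlet bound and passes to the symmetric limit $D^\infty$; the second absorbs the spectral-gap cost of order $\delta^2$ against the available room in the Dirichlet estimate; and the third invokes equivalence of ensembles. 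Uniformity of the supremum in $|x'|\le\deltaE$ is built into the spectral-gap step, since the enlarged box $W^+$ is chosen with fixed shape depending only on the window and accommodates any shift of this size.
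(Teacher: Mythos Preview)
There are two genuine gaps. First, your second summand $|\eta^l(x)-\eta^{\deltaE}(x)|$ is not handled by Proposition~\ref{prop:oneBK}: with $\Psi(\eta)=\eta(y_0)$ one gets $\tilPsi(a)=a$ and $\avsum_{|i|\le l}\tau_i\Psi(\eta)=\eta^l(y_0)$, so $V_l=|\eta^l(y_0)-\eta^l(0)|=O(|y_0|/l)$ and the one-block statement is vacuous for this choice. Comparing block averages at scales $l$ and $\deltaE$ is the two-blocks estimate itself; indeed, applying \eqref{eq:double:av} gives $\eta^{\deltaE}(x)=\avsum_{|x''|\le\deltaE}\eta^l(x+x'')+O(l\delta^{-1}\E^{1/2})$, which just sends you back to your first summand.

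Second, and more seriously, the spectral-gap route on mesoscopic windows does not close dimensionally. After translation-averaging, the projected density on a window $W^+$ of size $n\sim\delta\E^{-1/2}$ satisfies $D^\infty_{W^+}\le Cn\E\,\DE(\fNTE)\le C\delta\E^{1/2}$, while the gap of symmetric exclusion on $W^+$ is $\sim n^{-2}=\delta^{-2}\E$; hence $D^\infty_{W^+}/\text{gap}\sim\delta^3\E^{-1/2}\to\infty$ as $\E\to 0$, and no Rayleigh bound forces the density close to canonical. (\cite[Chapter~5]{kipnis99} does not actually argue the two-blocks estimate via the spectral gap.) The paper's proof avoids this by projecting onto two \emph{fixed-size} $l$-boxes at separation $j\le\deltaE$, passing to the limit of the density on this finite state space, and showing its Dirichlet form vanishes. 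Within each $l$-box this is the one-block computation; for the single virtual bond linking the boxes, the moving-particle lemma --- decomposing $\sigma_{1/2,1/2+j}$ into $2j-1$ nearest-neighbor swaps and applying Cauchy--Schwarz --- gives $\tilDelta\le Cj^2\E\,\DE(\fNTE)\le C\delta^2$, which vanishes as $\delta\to 0$. The essential saving is that one pays only $j^2\times(\text{per-site Dirichlet budget }\E)\le\delta^2$ for a single long-range swap, not the inverse-gap cost $n^2$ of equilibrating the whole mesoscopic window.
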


\begin{proof}[Proof of Proposition \ref{prop:oneBK}]
For any probability density function $g$ on $\calD$ define
\begin{align}\label{eq:prop:oneBK:bar:l}
	\bar g &:= \avsum_{|x|\le \E^{-1}R} \tau_x g,
\\
	\label{eq:prop:oneBK:Dirichlet}
	\Dirilinf(g) &:= \sum_{y_1,y_2\in\Lambda_l} D^\infty_{y_1,y_2}(g).
\end{align}
Note that $\overline{g^l}={\bar g}^l$,
and that $\Dirilinf$ is convex and lower-semicontinuous.
Since $\nu$ is translation invariant and $V_{l-n_0} \in\calG_{l}$,
we replace $l$ by $l-n_0$ in \eqref{eq:one:block} and rewrite the expectation as
\begin{align}\label{eq:prop:oneBK:fl}
	\bbE_\nu \sqBK{ V_{l-n_0}  \overline{ \fNTE } }
=
	\bbE_\nu \sqBK{ V_{l-n_0} \barf^l_\TE }.
\end{align}
Since the collection of all probability density functions on $\calD_l:=\curBK{\pm 1}^{\Lambda_l}$ is compact,
\begin{align}\label{eq:prop:oneBK:1}
	\varlimsup_{l\to\infty}
	\varlimsup_{\E\to 0}
	\bbE_\nu \sqBK{ V_{l-n_0} \overline{ \fNTE } }
=
	\varlimsup_{l\to\infty} 
	\bbE_\nu \sqBK{ V_{l-n_0}  f^l_\infty },
\end{align}
where $f^l_\infty := \varlimsup_{\E\to 0} \barflTE$,
which is also a probability density function on $\calD_l$.
Indeed, $\bar f^l$ is an average in expectation of $\bar f^{N_\E}$,
and by \eqref{eq:prop:oneBK:bar:l} 
$\bar f^{N_\E}$ is an average over space of $f^{N_\E}$.
Using the the convexity of $\Dirilinf$ twice, we have
\begin{align}\label{eq:prop:oneBK:Diri:conv}
	\Dirilinf(\barf^l_\TE) 
\leq
	\Dirilinf(\barf^{N_\E}_\TE)
\leq
	\avsum_{|x|\le \E^{-1}R}
	 D^\infty_l(\tau_x \fNTE).
\end{align}
Since for $\E$ small enough $N_\E > \E^{-1}R+l+m$,
by \eqref{eq:Diri<Diri} and Lemma \ref{lem:Dirichlet:form} we further get
\begin{align}\label{eq:prop:oneBK:Dirichlet:bd}
	\Dirilinf(\barflTE) \le C l \E \DE(\fNTE) \le C \E l.
\end{align}
Using \eqref{eq:prop:oneBK:Dirichlet:bd},
and the lower-semicontinuity of $\Dirilinf$, 
we obtain $\Dirilinf(f^l_\infty)=0$.

Since $r_1>0$, 
$\Dirilinf(f^l_\infty)=0$ implies $\sigma_{y_1,y_2}f^l_\infty=f^l_\infty$,
for all $y_1,y_2\in\Lambda_l$ with $|y_1-y_2|=1$.
We then deduce that $f^l_\infty$ is a function of $\sum_{|y|<l} \eta(y) = 2l \eta^l(0)$,
and since $f^l_\infty$ is a probability density function, we have
\begin{align}\label{eq:prop:oneBK:sum1}
	\sum_{|i|\le l} f^l_\infty(2i) \, 
	\bbP_\nu\BK{ \eta^l(0) = i/l } = \bbE_\nu(f^l_\infty) = 1.
\end{align}
Using \eqref{eq:prop:oneBK:sum1} we bound the expectation of \eqref{eq:prop:oneBK:1} as
\begin{align*}
	\bbE_\nu \sqBK{  V_{l-n_0} f^l_\infty }
	&=
	\sum_{|i|\leq l}  
	f^l_\infty(i) 
	\bbE_\nu \BK{  V_{l-n_0} \middle| \eta^l(0) = i/l }
	\bbP_\nu \BK{ \eta^l(0) = i/l }
\\
	&
	\quad\quad
	\leq
	\max_{|i|\le l} \ \bbE_\nu \BK{  V_{l-n_0} \middle| \eta^l(0) = i/l }.
\end{align*}
It suffices to show
\begin{align}\label{eq:prop:oneBK:2}
	\varlimsup_{l\to\infty} \max_{|i|\le l} 
	\ \bbE_\nu \BK{  V_{l-n_0} \middle| \eta^l(0) = i/l } = 0.
\end{align}

When $\eta^{l}(0)=i/l$, 
we have $\tilPsi(\eta^l(0))=\bbE_{\frac{l+i}{2l}}(\Psi):=\psi(i)$,
and by \eqref{eq:double:av} we further get,
for any $l'\le l-n_0$,
\begin{align*}
	V_{l-n_0} = \Bigg| \avsum_{|x-n_0|\le l} \tau_x \Psi + \psi(i) \Bigg|
\leq
	\avsum_{|x|\le l-n_0} \tau_x V_{l'} + C\frac{l'}{l}
\leq
	\avsum_{|x|\le l-n_0-l'} \tau_x V_{l'} + C\frac{l'}{l}.
\end{align*}
Since $\nu$ is translation invariant, 
for each $|x|\le l-l'-n_0$ we have 
$\bbE_\nu(\tau_x V_{l'}|\eta^l(0)=i/l)=\bbE_\nu(V_{l'}|\eta^l(0)=i/l)$.
Therefore, for any $l'>0$
\begin{align*}
	\varlimsup_{l\to\infty} \max_{|i|\le l} 
	\bbE_\nu \BK{  V_{l-n_0} \middle| \eta^l(0) = i/l }
\leq
	\varlimsup_{l\to\infty} \max_{|i|\le l} 
	\bbE_\nu \BK{  V_{l'} \middle| \eta^l(0) = i/l }.
\end{align*}
Indeed, $V_{l'}\in\calG_{l'+n_0}$. 
For any $\xi\in\calD_{l+n_0}$,
let $u$ be the number of its coordinates taking the value $+1$,
and let $v=2l+2n_0-u$ be the number of its coordinates taking the value $-1$.
Then we have
\begin{align*}
	&
	\bbP_\nu \BK{ \bigcap_{|y|<l'}\curBK{\eta(y)=\xi(y)} \middle| \eta^l(0)=i/l }
	= \binom{2l-u-v}{l+i-u} \left/ \binom{2l}{l+i} \right.
	= F^{u,v}_l \BK{\frac{l+i}{2l}},
\\
	&
	F^{u,v}_l(a)
	:= 
	\prod_{j_1=0}^{u-1}\BK{a-\frac{j_1}{2l}} 
	\prod_{j_2=0}^{v-1}\BK{1-a-\frac{j_2}{2l}}
	\left/\prod_{j_3=0}^{u+v-1} \BK{1-\frac{j_3}{2l}} \right..
\end{align*}
For each $u$ and $v$, $\lim_{l\to\infty} F^{u,v}_l(a)=a^u(1-a)^v$
\emph{uniformly} for $a\in[0,1]$.
Thus for any $l'>0$,
\begin{align*}
	\varlimsup_{l\to\infty} \max_{|i|\le l} 
	\bbE_\nu \BK{  V_{l'} \middle| \eta^l(0) = i/l }
\leq
	\sup_{a\in[0,1]}
	\bbE_{\nu_a} \Bigg(\Bigg| \avsum_{|x|\le l'} \tau_x \Psi - \bbE_{\nu_a}(\Psi) \Bigg|\Bigg).
\end{align*}
Finally, since $\nu_a$ is the product of i.i.d.\ measures and $\Psi$ is cylinder,
\begin{align*}
	\lim_{l'\to\infty}\sup_{a\in[0,1]} \bbE_{\nu_a} 
	\Bigg[\Big( \avsum_{|x|\le l'} \tau_x \Psi - \bbE_{\nu_a}(\Psi) \Big)^2\Bigg]
=
	0,
\end{align*}
concluding \eqref{eq:prop:oneBK:2}.
\end{proof}

\begin{proof}[Proof of Proposition \ref{prop:twoBK}]
By \eqref{eq:double:av}, 
we have 
\begin{align*}
	\eta^{\deltaE}(x) = \avsum_{|x''|\le\deltaE} \eta^l(x+x'') + O(l(\deltaE)^{-1}).
\end{align*}
For each $x'$, 
the contribution of those $x''$ with $|x'-x''|\le 2l$
to $\BK{\avvsum_{|x''|\le\deltaE} \eta^l(x+x'')}$ is of $O(l(\deltaE)^{-1})$.
Thus we reduce \eqref{eq:two:block} to showing
\begin{align}\label{eq:prop:twoBK:1}
	\varlimsup_{l\to\infty} \varlimsup_{\delta\to 0}\varlimsup_{\E\to 0}
	\sup_{ 2l<|j|\le \delta \E^{-\frac12} }
	\bbE_\nu \Bigg[
		\avsum_{|x|\le \E^{-1}R} 
		\fNTE \absBK{ \eta^l(x+j) - \eta^{l}(x) } 
	\Bigg]
	=0.
\end{align}
Let $\calG^j_l$ be the $\sigma$-algebra corresponding to
the restriction to $\Lambda_l\cup (j+\Lambda_l)$, and let
\begin{align}
	g^{l,j} := \bbE_\nu[g|\sigma(\calG_l,\calG^j_l)].
\end{align}
Similar to \eqref{eq:prop:oneBK:fl}, we rewrite the expectation of \eqref{eq:prop:twoBK:1} as
$
	\bbE_\nu \sqBK{ (\bar f_{T,\E})^{l,j} \absBK{ \eta^l(j) - \eta^{l}(0) } }.
$
Since $|j|>2l$, $(\bar f_{T,\E})^{l,j}$ is a probability density function on
the configuration of two disjoint intervals $\Lambda_l\cup(j+\Lambda_l)$.
By translating the interval $(j+\Lambda_l)$ to $(2l+\Lambda_l)$,
we obtain a probability density function $(\bar \fkf_{T,\E})^{l,j}$ on
$\calD'_l:=\curBK{\pm}^{\Lambda_l\cup(2l+\Lambda_l)}$.
We further write \eqref{eq:prop:twoBK:1} as
$
	\bbE_\nu \sqBK{ (\bar \fkf_{T,\E})^{l,j} \absBK{ \eta^l(l) - \eta^{l}(0) } }.
$
Similar to \eqref{eq:prop:oneBK:1}, 
by taking limits in
$
	\bbE_\nu \sqBK{ (\bar \fkf_{T,\E})^{l,j} \absBK{ \eta^l(l) - \eta^{l}(0) } }
$
we reduce \eqref{eq:prop:twoBK:1} to showing
\begin{align}\label{eq:prop:twoBK:2}
	\varlimsup_{l\to\infty}
	\bbE_\nu \BK{ \fkf^{l}_\infty\absBK{ \eta^l(l) - \eta^{l}(0) } } = 0,
\end{align}
where $\fkf^{l}_\infty$ is the limiting probability density function on $\calD'_l$
\begin{align*}
	\fkf^{l}_\infty := 
	\varlimsup_{\delta\to 0} \varlimsup_{\E\to 0} \sup_{2l<|j|\le\deltaE} (\bar \fkf_{T,\E})^{l,j}.
\end{align*}

For any probability density function $g$ on $\calD'_l$, define 
\begin{align*}
	\tilD^\infty_l(g) &:= \Dirilinf(g) + \Dirilinf(\tau_{2l}g) + \tilDelta(g),
\\
	\tilDelta(g) &:= r_1 
	\bbE_\nu\sqBK{ \BK{ (\sigma_{\frac12,\frac12+2l}g)^\frac12 - g^\frac12 }^2 }.
\end{align*}
The Dirichlet forms $\Dirilinf\dotBK$ and $\Dirilinf(\tau_{2l}\,\cdot\,)$ correspond to 
hops within $\Lambda_l$ and within $2l+\Lambda_l$, respectively,
and $\tilDelta$ corresponds to hops between $\frac12$ and $\frac12+2l$.
Similar to \eqref{eq:prop:oneBK:Diri:conv} and \eqref{eq:prop:oneBK:Dirichlet:bd},
we have $\Dirilinf(\fkf^{l}_\infty)=0$ and $\Dirilinf(\tau_{2l}\fkf^{l}_\infty)=0$.
As for $\tilDelta$, we have
\begin{align}\label{eq:prop:twoBK:tilDelta}
	\tilDelta((\bar \fkf_\TE)^{l,j})
=
	r_1 
	\bbE_\nu\sqBK{ \BK{ (\sigma_{\frac12,\frac12+j} (\barf_{T,\E})^{l,j})^\frac12 - ((\barf_{T,\E})^{l,j})^\frac12 }^2 }.
\end{align}
Without lost of generality, assume $j>0$.
Since for $y_1<y_2$ the swap $\sigma_{y_1,y_2}$ can be decomposed as
\begin{align*}
	\sigma_{y_1,y_2} g = \sigma_{y_1,y_1+1} \cdots \sigma_{y_2,y_1+1} \cdots \sigma_{y_1+2,y_1+1} \sigma_{y_1+1,y_1} g,
\end{align*}
we have
\begin{align*}
	\sigma_{\frac12,\frac12+j} g - g
=
	\sum_{i=1}^{j} \BK{ \sigma_{\bari,\bari+1} g_i - g_i }
	+ \sum_{i=1}^{j-1} \BK{ \sigma_{j-\bari,j-\bari-1} \tilg_i - \tilg_i },
\end{align*}
where each $g_i$ and $\tilg_i$ is a swapped $g$ (i.e.\ $\sigma_{y_1,y_2}\sigma_{y'_1,y'_2}\cdots g$).
Furthermore, since $\nu$ is invariant under swapping, $g$, $g_i$, and $\tilg$ are equal in distribution under $\nu$.
Hence, applying the Cauchy--Schwartz inequality to \eqref{eq:prop:twoBK:tilDelta} and using \eqref{eq:Diri:infty:Dyy:variant}, 
we obtain
\begin{align*}
	\tilDelta\BK{ (\bar \fkf_\TE)^{l,j} }
\leq
	C j \sum_{i=1}^j D^\infty_{\bari,\bari+1} \BK{ (\barf_\TE)^{l,j} }.
\end{align*}
By using the convexity of $D^\infty_{\bari,\bari+1}$ similar to \eqref{eq:prop:oneBK:Diri:conv},
we further get
\begin{align*}
	\tilDelta\BK{ (\bar \fkf_\TE)^{l,j} }
\leq
	C j^2 \E \DE(\fNTE) \le C j^2 \E. 
\end{align*}
Since $|j|\le \deltaE$, using the lower-semicontinuity of $\tilDelta$, 
we conclude $\tilDelta(\fkf^{l}_\infty)=0$, yielding $\tilD^\infty_l(\fkf^{l}_\infty)=0$.
 
From $\tilD^\infty_l(\fkf^{l}_\infty)=0$ we deduce that $\fkf^{l}_\infty$ 
is a function of the total number of particles in $\Lambda_l\cup(2l+\Lambda_l)$. 
Hence similar to \eqref{eq:prop:oneBK:2} 
we can decomposed the expectation of \eqref{eq:prop:twoBK:2} according to
$\sum_{y\in\Lambda_l\cup(2l+\Lambda_l)} \eta(y)$, 
and use the same argument following \eqref{eq:prop:oneBK:2}
to conclude \eqref{eq:prop:twoBK:2}.
\end{proof}

\begin{proof}[Proof of Lemma \ref{lem:pre:replace}]
Given $T>0$, $n=1,2$, $\phi\in C^0_{[-R,R]}(\bbR)$,
and $\Phi_t(x)$ as defined in \eqref{eq:Phi:defn},
it suffices to show that $\lim_{\E\to 0} \bbE(|\UE|)=0$, where
\begin{align*}
	\UE :=
	\fint_0^{T\E^{-2}} 
	\avsum_{|x|\le\E^{-1}R} \Phi_t(x) \, 
	\ZE_s(x)^n \, \phi(\E x) ds.
\end{align*}
For the Lipschitz cylinder function $\Psi(\eta)=\prod_{i=1}^{n_0}(\eta)$
we have $\Phi_t(x)=(\tau_x\Psi)(\eta_t)$.
Clearly, $\tilPsi(a)=a^{n_0}$.

Using \eqref{eq:double:av} and $\VertBK{\phi}_\infty<\infty$,
we obtain $|\UE| \le C( |\UEd_1| + |\UEd_2|)$, where
\begin{align}
	\label{eq:lem:pre:replace:U1}
	\UEd_1 &:=
	\fint_0^{T\E^{-2}} 
	\avsum_{|x|\le\E^{-1}R} 
	\sqBK{
		 \quad
		 \avsum_{0\le x' \le 2\delta \E^{-1/2}}
		 (\tau_{x+x'}\Psi)(\eta_s) \Big( \ZE_s(x+x')^n \phi(\E(x+x')) \Big) 
		}ds,
\\
	\notag
	\UEd_2 &:=
	\delta^{-1}\E^\frac12 
	\avsum_{|x\pm \E^{-1}R|\le \deltaE} 
	\fint_0^{T\E^{-2}} \ZE_s(x)^n ds.
\end{align}
By \eqref{eq:moment:esti}, $\lim_{\E\to 0}\bbE(|\UEd_2|)=0$,
for any $\delta>0$.

The proof is completed upon showing
$
	\displaystyle
	\varlimsup_{\delta\to 0}
	\varlimsup_{\E\to 0}\bbE(|\UEd_1|)=0
$,
which we next do.
Add and subtract $Z_s(x)^n\phi(\E x)$ inside the parentheses of \eqref{eq:lem:pre:replace:U1},
add and subtract 
\begin{align*}
	\BK{ \eta^{\delta\E^{-1/2}}(x+\delta\E^{-1/2}) }^{n_0}
	\ZE_s(x)^n  \phi(\E x)
=
	\tilPsi \BK{ \eta^{\delta\E^{-1/2}}(x+\delta\E^{-1/2}) }
	\ZE_s(x)^n  \phi(\E x)
\end{align*}
inside the bracket of \eqref{eq:lem:pre:replace:U1},
and make the change of variable $x+\delta\E^{-\frac12} \mapsto x$.
We then get 
$
	|\UEd_1|\le \UEd_{11}+\UEd_{12}+\UEd_{13}
$,
where
\begin{align*}
	&
	\UEd_{ij} :=
	\fint_0^{T\E^{-2}}  
	\avsum_{|x-\delta\E^{-\frac12}|\le\E^{-1}R} (\tilUEd_{ij})_s(x) ds,
\\
	&
	(\tilUEd_{11})_s(x) 
	:= \VertBK{\Psi}_\infty
	\avsum_{|x'| \le \delta \E^{-1/2}}
	\Big| 
		\ZE_s(x+x')^n \phi(\E(x+x'))
\\
		&
		\hphantom{(\tilUEd_{11})_s(x) := \VertBK{\Psi}_\infty\avsum_{|x'| \le \delta \E^{-1/2}}}
	 	- \ZE_s(x-\deltaE)^n \phi(\E (x-\deltaE)) 
	\Big|,
\\
	&
	(\tilUEd_{12})_s(x) :=
	(\tau_x V_{\delta\E^{-1/2}})(\eta_s) \
	\ZE_s(x-\delta\E^{-\frac12})^n \VertBK{\phi}_\infty,
\\
	&
	(\tilUEd_{13})_s(x) :=
	\Big| \eta^{\delta\E^{-\frac12}}_s(x) \Big|^{n_0} 
	\ZE_s(x-\delta\E^{-\frac12})^n \VertBK{\phi}_\infty.
\end{align*}
By \eqref{eq:Holder:esti:space} and the continuity of $\phi$, 
$
	\lim_{\E\to 0} \bbE\BK{\UEd_{11}} =0,
$
for any $\delta>0$.
Applying the Cauchy--Schwartz inequality to $\UEd_{12}$, we obtain
\begin{align}\label{eq:lem:pre:replace:CS:ineq}
\begin{split}
	[\bbE(\UEd_{12})]^2
	&\leq
	\bbE \Bigg( 
		\fint_0^{T\E^{-2}} \avsum_{|x-\delta\E^{-\frac12}|\le\E^{-1}R}
		(\tau_x V_{\delta\E^{-1/2}})^2(\eta_s) ds
		\Bigg)
\\
	&
	\quad\quad\quad
	\bbE \Bigg( 
		\fint_0^{T\E^{-2}} 
		\avsum_{|x-\delta\E^{-\frac12}|\le\E^{-1}R}
		\ZE_s(x-\delta\E^{-\frac12})^{2n} ds
		\Bigg).
\end{split}
\end{align}
By \eqref{eq:V:defn}, $V_n$ is bounded (in $n$) for any bounded $\Psi$,
so we can replace $(\tau_x V_{\delta\E^{-1/2}})^2(\eta_s)$ in \eqref{eq:lem:pre:replace:CS:ineq}
by $(\tau_x V_{\delta\E^{-1/2}})(\eta_s)$.
By Lemma \ref{lem:replace}, 
the first expectation of \eqref{eq:lem:pre:replace:CS:ineq} goes to zero 
under the iterated limit $(\varlimsup_{\delta\to 0} \varlimsup_{\E\to 0})$,
and by \eqref{eq:Z:moment}
the second expectation of \eqref{eq:lem:pre:replace:CS:ineq} is bounded in $\E$ and $\delta$,
yielding
$
	\displaystyle
	\varlimsup_{\delta\to 0} \varlimsup_{\E\to 0}
	\bbE\BK{\UE_{12}} =0.
$
Finally, for $U_{13}$,
given any $|x-\delta\E^{-1/2}|\le\E^{-1}R$ and $s\in[0,\E^{-2}T]$, define
\begin{align*}
	\calD_1(s,x) &:= 
	\curBK{ 
		|\ZE_s(x-\delta\E^{-\frac12})-\ZE_s(x+\delta\E^{-\frac12})| 
		\le \E^\frac18 
		},
\\
	\calD_2(s,x) &:= 
	\curBK{ \ZE_s(x-\delta\E^{-\frac12}) > \E^\frac{1}{16} }.
\end{align*}
By \eqref{eq:Holder:esti:space} for $u=\frac12$ and the Markov inequality
we have
$
	\bbP \BK{\calD_1(s,x)}
	\leq C \E^{\frac14-\frac18}.
$
Applying the Cauchy--Schwartz inequality and \eqref{eq:moment:esti}, we get
\begin{align}\label{eq:prop:replace:U13:1}
	\bbE[(\tilUEd_{13})_s(x) \bbbone_{\calD_1(s,x)^c}]
\leq
	\bbP(\calD_1(x,s)^c)^\frac12 \VertBK{\ZE_s(x)^n}_2
\leq
	C \E^\frac{1}{16}.
\end{align}
Indeed, for $\calD_2(s,x)$ we have
\begin{align}\label{eq:prop:replace:U13:2}
	\bbE[(\tilUEd_{13})_s(x) \bbbone_{\calD_2(s,x)^c}]
\leq
	C \E^\frac{1}{16}.
\end{align}
From the definitions \eqref{eq:Z:defn} and \eqref{eq:etal:defn}, we have
\begin{align*}
	\eta^{\delta\E^{-\frac12}}_s(x)
=
	-\frac{1}{2\delta\lambda} 
	\log
	\frac{ \ZE_s(x+\delta\E^{-\frac12}) }{ \ZE_s(x-\delta\E^{-\frac12}) }.
\end{align*}
Since on $\calD_1(s,x)\cap \calD_2(s,x)$ we have
\begin{align*}
	\frac{\absBK{\ZE_s(x-\delta\E^{-\frac12})-\ZE_s(x+\delta\E^{-\frac12})}}
	{\ZE_s(x-\delta\E^{-\frac12})}
\leq
	\E^\frac{1}{16},
\end{align*}
applying the inequality $|\log(1+t)| \le 2|t|$, which holds for $|t|<\frac12$,
we bound $(\tilUEd_{13})_s(x)\bbbone_{\calD_2(s,x)\cap \calD_3(s,x)}$ by
$
	C(\delta) \E^\frac{n_0}{16} \ZE_s(x-\delta\E^{-\frac12})^n.
$
Further applying \eqref{eq:moment:esti}, we get 
\begin{align}\label{eq:prop:replace:U13:3}
	\bbE\BK{(\tilUEd_{13})_s(x)\bbbone_{\calD_2(s,x)\cap \calD_3(s,x)}} 
\le 
	C(\delta) \E^\frac{n_0}{16}
\leq
	C(\delta) \E^\frac{1}{16}.
\end{align}
Combining \eqref{eq:prop:replace:U13:1}, \eqref{eq:prop:replace:U13:2},
and \eqref{eq:prop:replace:U13:3},
we obtain $ \lim_{\E\to 0} \bbE(\UEd_{13})=0$ for any $\delta>0$,
concluding the proof.
\end{proof}

\section{Exact statistics.}
\label{sec:exact:stats}

In this section we consider the step initial condition \eqref{eq:step:ic}.
Instead of \eqref{eq:calZE:defn}, we use the following modified scaled field
\begin{align}
	\label{eq:calZ:step:ic}
	\scrZE_T(X) 
	&:= \beta'\lambda (2\E^\frac12)^{-1} \calZE_{T}(X),
\end{align}
where $\beta$ and $\beta'$ are as in \eqref{eq:beta:beta'}.
The extract factor of $\beta'\lambda (2\E^\frac12)^{-1}$
ensures that $\scrZE_0\dotBK$ converges to $\delta_0\dotBK$.
Since $\FE_T(X)$, as defined in \eqref{eq:F:engy:defn}, satisfies 
$P_T(X) e^{\FE_T(X)}=\scrZE_T(X) $
and $P_T(X) e^{\scrF(T,X)}= Z_T(X)$,
to prove Theorem \ref{thm:step:stats} it suffices 
to show the convergence of $\calZE_\cdot\dotBK$ to 
the solution of \eqref{eq:SHE:mild} for the initial condition $\delta_0\dotBK$.
However, Theorem \ref{thm:main} does not apply directly 
to the initial condition \eqref{eq:step:ic},
as it violates \eqref{eq:initial:cond:bd} and \eqref{eq:initial:cond:Holder}.
We circumvent this problem following the same argument of \cite{amir11}.
First we show the following holds

\begin{lemma}\label{lem:exact:stats:ic}
For the step initial condition \eqref{eq:step:ic},
given any $j\in\bbN$, $\delta>0$, $u\in(0,1)$, we have
\begin{align}
	\label{eq:exact:stats:moment:esti}
	\VertBK{ \E^{-\frac12} \ZE_{\E^{-2}\delta}(x) }_{2j} &\leq C(j,\delta),
\\
	\label{eq:exact:stats:Holder:esti:space}
	\VertBK{ \E^{-\frac12} \BK{\ZE_{\E^{-2}\delta}(x)-\ZE_{\E^{-2}\delta}(x')} }_{2j} &\leq
	(\E|x-x'|)^\frac{u}{2} C(j,\delta,u).
\end{align}
\end{lemma}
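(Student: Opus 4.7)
The plan is to adapt the mild-form estimates of Proposition \ref{prop:Holder} to the step initial condition \eqref{eq:step:ic}. First, from \eqref{eq:step:ic} and \eqref{eq:h:defn} one reads off $h_0(x)=|x|$, hence $\ZE_0(x)=\exp(-\lambdaE|x|)$. This ``narrow wedge'' violates \eqref{eq:initial:cond:bd}--\eqref{eq:initial:cond:Holder}, yet satisfies the sharp $L^1$-bound $\|\ZE_0\|_1=\sum_x e^{-\lambdaE|x|}\le C\E^{-1/2}$, reflecting concentration on scale $\E^{-1/2}$ around the origin. The $\E^{-1/2}$ prefactor in \eqref{eq:exact:stats:moment:esti}--\eqref{eq:exact:stats:Holder:esti:space} is precisely the reciprocal of this mass.

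Decomposing $\ZE_t=I_1+I_2+I_3+I_4$ via \eqref{eq:discr:SHE:int}, the deterministic piece obeys $I_1(x)\le\|\bfp_t\|_\infty\|\ZE_0\|_1\le C(t+1)^{-1/2}\E^{-1/2}$ by \eqref{eq:p:esti:Linfty}, which at $t=\E^{-2}\delta$ gives $I_1(x)\le C\E^{1/2}/\sqrt{\delta}$, matching the target scale of \eqref{eq:exact:stats:moment:esti}. For the martingale term $I_2$ I would apply Lemma \ref{lem:QVcomparision} with $f_s(x,y)=\bfp_{t-s}(x-y)$ and use $\bfp_{\lceil t-s\rceil}*\|\ZE_s(\cdot)\|_{2j}^2\le\sup_y\|\ZE_s(y)\|_{2j}^2$, with $I_3,I_4$ handled as in \eqref{eq:prop:Holder:I3:Vert}--\eqref{eq:prop:Holder:I4:Vert}. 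Writing $f_\E(t):=\sup_x\|\ZE_t(x)\|_{2j}^2$ and noting $f_\E(0)=\sup_x e^{-2\lambdaE|x|}\le 1$, this yields a Volterra-type inequality
\begin{equation*}
	f_\E(t)\le \frac{C\E^{-1}}{t+1}+C\E\int_0^t(t-s)^{-1/2}f_\E(s)\,ds+C\E^2\int_0^t f_\E(s)\,ds,
\end{equation*}
which, on splitting the integrals into the short-time layer $s\le\E^{-1}$ where one uses $f_\E(s)\le 1$ and the long-time range $s\in[\E^{-1},\E^{-2}\delta]$ where one bootstraps off the deterministic bound $C\E^{-1}(s+1)^{-1}$, closes to $f_\E(\E^{-2}\delta)\le C(j,\delta)\E$, proving \eqref{eq:exact:stats:moment:esti}.

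For the spatial H\"older estimate \eqref{eq:exact:stats:Holder:esti:space}, I would run the same decomposition on $\nabla_n\ZE_t$ with $n=x'-x$, paralleling the derivation of \eqref{eq:Z:Holder:esti:space}. The deterministic contribution $\nabla_n I_1=(\nabla_n\bfp_t)*\ZE_0$ is controlled via \eqref{eq:del:p:esti:Linfty} together with $\|\ZE_0\|_1\le C\E^{-1/2}$, giving $|\nabla_n I_1(x)|\le C(\E|n|)^{u/2}\E^{1/2}\delta^{-(1+u)/2}$ at $t=\E^{-2}\delta$, and the stochastic and gradient-error terms are treated via Lemma \ref{lem:QVcomparision} and \eqref{eq:prop:Holder:I3:del}--\eqref{eq:prop:Holder:I4:del}, now inserting the moment bound just derived. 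The main obstacle is the violation of \eqref{eq:initial:cond:bd}--\eqref{eq:initial:cond:Holder}: no weighted $L^\infty$-norm of the form $|\ZE_0|_{a,2j}$ underlying Proposition \ref{prop:Holder} controls $\ZE_0$ uniformly in $\E$, so one must propagate the $L^1\!\to\! L^\infty$ smoothing input $C\E^{-1}(t+1)^{-1}$ through the Gronwall scheme while carefully handling the initial time layer $t\lesssim \E^{-1}$ in which $f_\E$ is merely $O(1)$. This is precisely the adaptation carried out by Bertini--Giacomin \cite{bertini97} in the simple exclusion case, and transfers to our setting once Proposition \ref{prop:Holder}'s scheme is in place.
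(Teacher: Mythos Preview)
Your decomposition and the treatment of the deterministic piece $I_1$ are correct and match the paper. The gap is in how you close the stochastic terms. By passing immediately to $f_\E(t)=\sup_x\|\ZE_t(x)\|_{2j}^2$ and writing a scalar Volterra inequality, you lose the spatial structure, and the bootstrap you sketch does not close: plugging the target profile $f_\E(s)\sim C\E^{-1}(s+1)^{-1}$ into $C\E\int_0^t(t-s)^{-1/2}f_\E(s)\,ds$ at $t=\E^{-2}\delta$ produces a factor $\log(1/\E)$ from the range $s\in[\E^{-1},t/2]$, so the right-hand side is of order $\E\log(1/\E)$, not $\E$. The appeal to ``bootstrapping off the deterministic bound'' is therefore circular as written.

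The paper avoids this by \emph{retaining the convolution in $x$} throughout the iteration: one iterates the inequality $\|(\E^{-1/2}\ZE_t)^2\|_j\le C g^\E_t + C\int_0^t f^\E_{t-s}*\|(\E^{-1/2}\ZE_s)^2\|_j\,ds$ with $f^\E,g^\E$ functions of both $s$ and $x$, obtaining space--time convolutions over the simplex $\Delta_n(t)$. On each simplex one picks the largest coordinate $s_{i_*}\ge t/(n+1)$, bounds that factor in $L^\infty_x$, and the remaining factors in $L^1_x$; since $\int_0^{\E^{-2}\tilT}\|f^\E_s\|_{L^1_x}\,ds\le C$ while $\|f^\E_{s_{i_*}}\|_{L^\infty_x}$ and $\|g^\E_{s_{i_*}}\|_{L^\infty_x}$ carry the needed decay, the $n$-th iterate is $O(n^2C^n)$ with no logarithm, and the series sums. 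A second consequence of keeping the spatial structure is the \emph{summed} moment bound $\sum_x\|\E^{-1/2}\ZE_t(x)\|_{2j}^2\le C(\E^{-2}t^{-1/2}+1)$, which your scalar scheme does not yield. This summed bound is exactly what the paper uses for the H\"older estimate on the early interval $(0,\delta\E^{-2}/2)$, where one takes the sup of the heat-kernel factor and sums $\|\E^{-1/2}\ZE_s\|_{2j}^2$ over $x$; on the late interval $(\delta\E^{-2}/2,\delta\E^{-2})$ the roles are reversed. Inserting only the pointwise moment bound, as you propose, again produces a logarithmic loss near $s=0$.
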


\noindent
From Lemma \ref{lem:exact:stats:ic},
conditions \eqref{eq:initial:cond:bd} and \eqref{eq:initial:cond:Holder}
hold for $\scrZE_{\delta}$, for any $\delta>0$.
We then apply Theorem \ref{thm:main} to conclude 
$\scrZE_\cdot\dotBK \Rightarrow Z_\cdot\dotBK$ on $[\delta,\infty)\times\bbR$,
where $Z_T(X)$ satisfying the \ac{SHE} \eqref{eq:SHE:mild} on $[\delta,\infty)\times\bbR$.
Next, the extension argument in \cite[Section 3]{amir11} 
extends $Z_T(X)$ to $(T,X)\in(0,\infty)\times\bbR$, yielding
$\scrZE_\cdot\dotBK \Rightarrow Z_\cdot\dotBK$ on $(0,\infty)\times\bbR$, and 
\begin{align}\label{eq:exact:stats:SHE:mild}
	Z_{T}(X)
=
	\int_\bbR P_{T-\delta}(X-X') Z_{\delta}(X') dX'
	+
	\int_{\delta}^{T} 
	\int_\bbR P_{T-S}(X-X') Z_{S}(X') W(dX'dS),
\end{align}
for any $\delta>0$.
The proof is then completed upon showing the following

\begin{lemma}\label{prop:exact:stats:initial:conv}
When $\delta\to 0$, 
the RHS of \eqref{eq:exact:stats:SHE:mild} converges weakly to
\begin{align}\label{eq:exact:stats:SHE:delta}
	 P_{T}(X)
	+
	\int_{0}^{T} 
	\int_\bbR P_{T-S}(X-X') Z_{S}(X') W(dX'dS).
\end{align}
\end{lemma}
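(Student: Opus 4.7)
The plan is to decompose the RHS of \eqref{eq:exact:stats:SHE:mild} as $A_\delta(X)+B_\delta(X)$, where
\begin{align*}
A_\delta(X):=\int_\bbR P_{T-\delta}(X-X')Z_\delta(X')dX',
\qquad
B_\delta(X):=\int_\delta^T \int_\bbR P_{T-S}(X-X')Z_S(X')W(dX'dS),
\end{align*}
and to show, as $\delta\to 0$, that $A_\delta(X)\to P_T(X)$ and $B_\delta(X)\to B_0(X):=\int_0^T\int_\bbR P_{T-S}(X-X')Z_S(X')W(dX'dS)$ separately in $L^2$.

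As a preliminary, I would transfer the uniform moment bound \eqref{eq:exact:stats:moment:esti} from $\scrZE_\delta$ to the limiting process. The weak convergence $\scrZE_\cdot\dotBK\Rightarrow Z_\cdot\dotBK$ on $[\delta,\infty)\times\bbR$ combined with Fatou's lemma gives $\sup_X \VertBK{Z_S(X)}_{2j}\le C(j,S)$ for each $S>0$, with an $S$-dependence inherited from Lemma~\ref{lem:exact:stats:ic} mild enough that $S\mapsto C(1,S)^2(T-S)^{-1/2}$ is integrable on $(0,T)$.

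The treatment of $B_\delta$ is then direct: by the It\^o isometry,
\begin{align*}
\bbE\BK{\BK{B_0(X)-B_\delta(X)}^2}=\int_0^\delta \int_\bbR P_{T-S}(X-X')^2\,\bbE\BK{Z_S(X')^2}\,dX'dS,
\end{align*}
and using $\int_\bbR P_{T-S}(X-X')^2 dX'=(4\pi(T-S))^{-1/2}$, the RHS is bounded by $\int_0^\delta C(1,S)^2(4\pi(T-S))^{-1/2}dS\to 0$ as $\delta\to 0$ by dominated convergence. In particular $B_0(X)$ is a well-defined $L^2$ random variable.

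For $A_\delta$, iterating \eqref{eq:exact:stats:SHE:mild} between two times $\delta'<\delta$ gives
\begin{align*}
A_\delta(X)-A_{\delta'}(X)=\int_{\delta'}^\delta \int_\bbR P_{T-S}(X-X')Z_S(X')W(dX'dS),
\end{align*}
so the same It\^o isometry estimate shows that $\curBK{A_\delta(X)}_{\delta>0}$ is Cauchy in $L^2$ and converges to some $L(X)$. The main obstacle is then the identification $L(X)=P_T(X)$, which I would carry out by showing $\bbE\BK{A_\delta(X)}\to P_T(X)$: taking expectations in \eqref{eq:exact:stats:SHE:mild} gives $\bbE\BK{Z_T(X)}=P_{T-\delta}*\bbE\BK{Z_\delta}(X)$ for every $0<\delta<T$, so $\bbE\BK{Z_\cdot}$ solves the continuous heat equation on $(0,\infty)\times\bbR$. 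By uniqueness for the heat equation with measure initial data, the identification reduces to showing $\bbE\BK{Z_\delta}\to\delta_0$ weakly as $\delta\to 0$. The latter follows from the particle-level observation that the deterministic profile $\scrZE_0(X)$ has total mass $O(1)$ and concentrates on a spatial scale $O(\E^{1/2})$ about the origin (as can be read off from \eqref{eq:calZ:step:ic} together with \eqref{eq:step:ic}), combined with the fact that $\bbE\BK{\scrZE_\delta}$ is essentially the discrete heat semigroup applied to $\scrZE_0$, up to error terms whose expectations are negligible by Proposition~\ref{prop:main:drift}.
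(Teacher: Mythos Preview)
Your decomposition $A_\delta+B_\delta$ matches the paper's, and the It\^o isometry for $B_\delta$ is the right starting point, but the moment input you use is too weak. The pointwise bound $\sup_X\VertBK{Z_S(X)}_2^2\le C(1,S)^2$ inherited from Lemma~\ref{lem:exact:stats:ic} blows up at least like $S^{-1}$ (already $\bbE Z_S(0)\sim P_S(0)\sim S^{-1/2}$), so $\int_0^\delta C(1,S)^2(T-S)^{-1/2}dS$ diverges and your integrability claim fails. The paper instead uses the \emph{spatially integrated} second-moment bound \eqref{eq:lem:exact:stats:ic:mom:sum}, which in the limit reads $\int_\bbR\bbE(Z_S(X')^2)dX'\le C S^{-1/2}$, and pairs it with $\sup_{X'}P_{T-S}(X-X')^2\le C(T)$; this yields the finite $\int_0^\delta C S^{-1/2}dS=C\delta^{1/2}$. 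The same correction repairs your Cauchy argument for $A_\delta$, which relies on the identical estimate.

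The identification $L(X)=P_T(X)$ is where your argument has a real gap. From $A_\delta\to L$ in $L^2$ and $\bbE(A_\delta)\to P_T(X)$ you only get $\bbE L=P_T(X)$; nothing you wrote forces $L$ to be deterministic. You could close this by observing that $A_\delta$ is measurable with respect to the white noise up to time $\delta$, so $L$ is $\bigcap_{\delta>0}\scrF^W_\delta$-measurable and hence a.s.\ constant by a zero--one law, but you have not said so. The paper avoids this issue altogether by returning to the particle level: writing $\scrZE_\delta$ as $\sum_{i=1}^4(\KE_i)_\delta$ via the integrated \ac{dSHE} \eqref{eq:discr:SHE:int}, it shows that the deterministic piece $P_{T-\delta}*(\KE_1)_\delta\to P_T$ directly, while the random pieces $i=2,3,4$ are bounded in $L^2$ using \eqref{eq:lem:exact:stats:ic:I234} and \eqref{eq:lem:exact:stats:ic:mom:sum} and vanish as $\delta\to 0$. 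This gives $A_\delta\to P_T(X)$ in $L^2$ without any zero--one law or heat-equation uniqueness argument.
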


Now we prove Lemma \ref{lem:exact:stats:ic} and \ref{prop:exact:stats:initial:conv}.

\begin{proof}[Proof of Lemma \ref{lem:exact:stats:ic}]
Let $I_1$, $I_2$, $I_3$, $I_4$ 
denote the first, second, third, fourth terms on the RHS of \eqref{eq:discr:SHE:int}, respectively,
and let $J_i:=\E^{-\frac12} I_i$, $i=1,\ldots,4$.
Note that $J_1$ is deterministic since $\ZE_0$ is.
By \eqref{eq:prop:Holder:4inequ} we have
\begin{align*}
	\VertBK{(\E^{-\frac12}\ZE_t)^2}_{j}
\leq
	4
	\BK{ 
		 (J_1)^2 + \VertBK{(J_2)^2}_{j}
		+\VertBK{(J_3)^2}_{j} + \VertBK{(J_4)^2}_{j}
		}.
\end{align*}
Since $\scrZE_0\dotBK$ approximates the delta function, we have
\begin{align}\label{eq:lem:exact:stats:ic:deltasum}
	\E^\frac12 \sum_x \ZE_0(x) \le C.
\end{align}
Since $J_1=\bfp_t*\E^{-\frac12} \ZE_0$,
using \eqref{eq:p:esti:sum} we have
$
	J_1
	\leq C (\E^2t)^{-\frac12}.
$
From \eqref{eq:prop:Holder:I2:Vert}, \eqref{eq:prop:Holder:I3:Vert}, and \eqref{eq:prop:Holder:I4:Vert},
we obtain
\begin{align}\label{eq:lem:exact:stats:ic:I234}
\begin{split}
	&
	\VertBK{ (J_2)^2 }_j \leq
	C
	\int_0^ t \E (t-s)^{-\frac12}\bfp_\ceilBK{t-s}*\VertBK{(\E^{-\frac12}\ZE_s)^2}_j ds,
\\
	&
	\VertBK{ (J_3)^2 }_j \leq
	 C
	\int_0^t \E^2 \bfp_{t-s} * \VertBK{(\E^{-\frac12}\ZE_s)^2}_j ds,
\\
	&
	\VertBK{ (J_4)^2 }_j \leq
	 C
	\int_0^t \E \sum_{|k|\le m} |\nabla_k\bfp_{t-s}| * \VertBK{(\E^{-\frac12}\ZE_s)^2}_j ds.
\end{split}
\end{align}
Combining the preceding estimates of 
$\Vert(J_i)^2\Vert_{j}$, $i=1,\ldots,4$,
we arrive at the following inequality
\begin{align*}
	\VertBK{(\E^{-\frac12}\ZE_t)^2}_{j} 
\le 
	C \gE_s
	+ 
	C \int_0^t \fE_{t-s} * \VertBK{ (\E^{-\frac12}\ZE_s)^2 }_{j} ds,
\end{align*}
where 
\begin{align*}
	 \gE_t :=
	(\E^{2}t)^{-\frac12} \BK{ \bfp_t * \E^{-\frac12}\ZE_0 },
\quad
	\fE_t := 
	\E t^{-\frac12} \bfp_\ceilBK{t} + \E^2 \bfp_t 
		+ \sum_{|k|\le m} \E |\nabla_k\bfp_t|. 
\end{align*}
After iteration we get
\begin{align}\label{eq:lem:exact:stats:ic:iterate}
	\VertBK{(\E^{-\frac12}\ZE_t)^2}_{j}  
\leq  
	C g_t 
	+
	\sum_{n=1}^\infty \frac{C^n}{n!} 
	\int_{\Delta_n(t)} \fE_{s_1}*\cdots *\fE_{s_n}*\gE_{s_{n+1}} ds_1 \cdots ds_n,
\end{align}
where $\Delta_n(t):=\curBK{(s_1,\ldots,s_{n+1}):s_1+\ldots+s_{n+1}=t}$.
Using \eqref{eq:lem:exact:stats:ic:deltasum},
\eqref{eq:p:esti:Linfty}, \eqref{eq:del:p:esti:Linfty},
\eqref{eq:p:esti:sum}, and \eqref{eq:del:p:esti:sum},
we have 
\begin{align}
	&
	\label{eq:lem:exact:stats:ic:fgSup}
	\gE_t \le C (\E^2 t)^{-1}, \quad \fE_t \le C ( \E t^{-1} + \E^2 t^{-\frac12} ),
\\
	&
	\label{eq:lem:exact:stats:ic:fgSum}
	\sum_x \gE_t(x) \leq C \E^{-2} t^{-\frac12}, \quad
	\sum_x \fE_t(x) \le C \BK{ (\E^2 t)^{-\frac12} + \E^2 }.
\end{align}
For $(s_1,\ldots,s_{n+1})\in\Delta_n(t)$,
we must have $s_{i_*}\ge t(n+1)^{-1}$ for some $i_*\in\curBK{1,\ldots,n+1}$.
Hence, using \eqref{eq:lem:exact:stats:ic:fgSup} and \eqref{eq:lem:exact:stats:ic:fgSum},
we bound the integrand in \eqref{eq:lem:exact:stats:ic:iterate} by
\begin{align*}
	&
	\sum_{i_*=1}^{n}
	C^n \Bigg( \prod_{\substack{i=1,\ldots,n \\ i\neq i_*}} (\E s_i^{-\frac12} + \E^2) \Bigg) 
	\BK{ \E^{-2} (s_{n+1})^{-\frac12} }
	\BK{ \E t^{-1}(n+1) + \E^2 t^{-\frac12} (n+1)^\frac12 }
\\
	&\quad\quad
	+
	C^n \Bigg( \prod_{i=1}^n (\E s_i^{-\frac12} + \E^2) \Bigg) 
	 (\E^2 t(n+1)^{-1})^{-1}.
\end{align*}
For $t\in[\E^{-2}\delta/2,\E^{-2}\tilT]$, integrating in time,
we bound the integral in \eqref{eq:lem:exact:stats:ic:iterate} by $n^2 C(\delta)^n$.
Hence
\begin{align}\label{eq:lem:exact:stats:ic:mom:sup}
	\VertBK{(\E^{-\frac12}\ZE_t)^2}_{j}  \le C(\delta),
	\text{ when } t\geq \frac{\delta\E^{-2}}{2}.
\end{align}
This proves \eqref{eq:exact:stats:moment:esti}.
Similarly, summing over $x$ in \eqref{eq:lem:exact:stats:ic:iterate}
by using \eqref{eq:lem:exact:stats:ic:fgSum},
we then get
\begin{align}\label{eq:lem:exact:stats:ic:mom:sum}
	\sum_x \VertBK{(\E^{-\frac12}\ZE_t)(x)^2}_{j}
	=
	\sum_x \VertBK{(\E^{-\frac12}\ZE_t)(x)}^2_{2j}  
	\le 
	C ( \E^{-2} t^{-\frac12} + 1 ).
\end{align}

Next we prove \eqref{eq:exact:stats:Holder:esti:space}.
Put $n=x'-x$ so that $\ZE_t(x')-\ZE_t(x)=\nabla_n\ZE_t(x)$.
First we have
\begin{align*}
	\VertBK{ \E^{-\frac12} \nabla_n\ZE_{\E^{-2}\delta} }_{2j}
\leq
	\sum_{i=1}^4 \VertBK{  (\nabla_n J_i)_{\E^{-2}\delta} }_{2j}.
\end{align*}
For $i=1$ we have
$
	\VertBK{ (\nabla_n J_1)_{\E^{-2}\delta} }_{2j}
	\leq 
	|\E^{-1}\nabla_n\bfp_{\E^{-2}\delta}| * \E^\frac12 \ZE_0.
$
Applying \eqref{eq:lem:exact:stats:ic:deltasum} and \eqref{eq:del:p:esti:Linfty} 
for $v=\frac{u}{2}$ we get 
$ |\nabla_n\bfp_{\E^{-2}\delta}| * \ZE_0 \le C(\delta) |\E n|^\frac{u}{2}$.
Next, by \eqref{eq:prop:Holder:I2:del:Vert}, \eqref{eq:prop:Holder:I3:del}, 
and \eqref{eq:prop:Holder:I4:del}, we have 
\begin{align}\label{eq:lem:exact:stats:ic:three}
\begin{split}
	&
	\Vert \nabla_n (J_2)_{\E^{-2}\delta} \Vert_{2j}^2
	\leq 
	C \E \int_0^{\E^{-2}\delta} 
	|n|^u (t-s)^{-\frac{1+u}{2}}
	\BK{ \bfp_\ceilBK{t-s} + \tau_n\bfp_\ceilBK{t-s}} * \VertBK{\E^{-\frac12}\ZE_s}^2_{2j} ds.
\\
	&
	\VertBK{ \nabla_n (J_3)_{\E^{-2}\delta} }^2_{2j} \leq
	C |n\E|^u
	\int_0^{\E^{-2}\delta} \E^2 (\bfp_{t-s} + \tau_n\bfp_{t-s}) *
	\VertBK{\E^{-\frac12}\ZE_s}^2_{2j} ds,
\\
	&
	\VertBK{ \nabla_n (J_4)_{\E^{-2}\delta} }^2_{2j} \leq
	C |n\E|^u
	\sum_{|k|\le m} 
	\int_0^{\E^{-2}\delta} \E (|\nabla_k\bfp_{t-s}| + |\tau_n\nabla_k\bfp_{t-s}| ) *
	\VertBK{\E^{-\frac12}\ZE_s}^2_{2j} ds.
\end{split}
\end{align}
To bound the terms in \eqref{eq:lem:exact:stats:ic:three}, we divide the time integrals into
an integral over $(0,\delta\E^{-2}/2)$ and an integral over $(\delta\E^{-2}/2,\delta\E^{-2})$.
For the integral over $(0,\delta\E^{-2}/2)$ apply the inequality 
\begin{align}\label{eq:lem:exact:stats:ic:ab}
	\sum_{x'} |a(x') b(x')| \leq \sup_{x'} |a(x')| \sum_{x'} |b(x')|
\end{align}
with $b=\VertBK{\E^{-\frac12}\ZE_s}^2_{2j}$ and $a$ being the rest of the integrand.
By using \eqref{eq:p:esti:sup}, \eqref{eq:del:p:esti:Linfty}, and \eqref{eq:lem:exact:stats:ic:mom:sum},
we bound the integrals by $C(\delta)|n\E|^{u/2}$.
Similarly, for the integral over $(\delta\E^{-2}/2,\delta\E^{-2})$ apply \eqref{eq:lem:exact:stats:ic:ab}
with $a=\VertBK{\E^{-\frac12}\ZE_s}^2_{2j}$ and $b$ being the rest of the integrand.
By using \eqref{eq:p:esti:sum}, \eqref{eq:del:p:esti:sum}, and \eqref{eq:lem:exact:stats:ic:mom:sup},
we bound the integrals by $C(\delta)|n\E|^{u/2}$.
Hence $\VertBK{ \E^{-\frac12} \nabla_n (I_i)_{\E^{-2}\delta} }^2_{2j} \le C(\delta)|n\E|^{u/2}$, for $i=2,3,4$,
concluding \eqref{eq:exact:stats:Holder:esti:space}.
\end{proof}

\begin{proof}[Proof of Lemma \ref{prop:exact:stats:initial:conv}]
Let $\calA_\delta$, $\calB_\delta$ denote the first and second terms on the RHS 
of \eqref{eq:exact:stats:SHE:mild}, respectively,
and let $\calA$, $\calB$ denote the first and second term of \eqref{eq:exact:stats:SHE:delta},
respectively.
By the It\^o isometry,
\begin{align*}
	\bbE \BK{ \int_0^\delta \int_\bbR P_{T-S}(X-X') Z_S(X') W(dX dS)  }^2
	=
	\int_0^\delta \int_\bbR P_{T-S}(X-X')^2 \bbE\BK{Z_S(X')}^2 dX'dS.
\end{align*}
Using the boundedness of $ P_{T-S}(X-X')^2$ and \eqref{eq:lem:exact:stats:ic:mom:sum}, 
we further bound this expression by 
\begin{align}\label{eq:prop:exact:stats:initial:conv:1}
	C \varlimsup_{\E\to 0} \int_0^\delta \E \sum_{x'}  \bbE\BK{\E^{-\frac12}\ZE_{\beta\E^{-2}S}(\beta'x')}^2 dS
\leq
	C \varlimsup_{\E\to 0} \int_0^\delta \E S^{-\frac12} dS
=
	C \delta^\frac12.
\end{align}
Since the RHS of \eqref{eq:prop:exact:stats:initial:conv:1} converges to zero as $\delta\to 0$,
$\calB_\delta$ weakly converges to $\calB$.

Let $(\KE_i)_T(X):=\lambda\beta 2^{-1}(J_i)_{\beta\E^{-2}T}(\beta'\E^{-1}X)$.
By definition $\calA_\delta$ is the weak limit of
\begin{align*}
	 \sum_{i=1}^4 \lim_{\E\to 0} P_{T-\delta}*(\KE_i)_\delta.
\end{align*}
Since $\calZE_0\dotBK$ approximates the delta function, by \eqref{eq:p:conv:to:P} and \eqref{eq:beta:beta'} we have
$
	(\KE_1)_\delta(X) \xRightarrow[]{\E\to 0} P_\delta(X),
$
yielding 
$
	\lim_{\E\to 0} P_{T-\delta}*(\KE_1)_\delta = P_T.
$
Next, using \eqref{eq:lem:exact:stats:ic:I234}, \eqref{eq:lem:exact:stats:ic:mom:sum}, \eqref{eq:p:esti:sum},
and the boundedness of $P_{T-\delta}$,
for $i=2,3,4$ we have
\begin{align*}
	\lim_{\delta\to 0} \Big\Vert \lim_{\E\to 0} P_{T-\delta}*(\KE_i)_\delta \Big\Vert_2
\leq
	C
	\lim_{\delta\to 0} \lim_{\E\to 0}
	\E \sum_{x'} 
	\VertBK{ (J_i)_{\E^{-2}\delta}(x') }_2
=
	0,
\end{align*}
concluding the proof.
\end{proof}

\appendix
\section{}
\label{sec:app}

\begin{proof}[Proof of Lemma \ref{lem:gamma:tilr}]

We first solve the equation \eqref{eq:matching:eq} without the $O(\E^2)$,
that is
\begin{align}\label{eq:matching:eq:exact}
\begin{split}
	 A^\E\tilrE  
	&= 
	4^{-1} \BK{ u(\E) Ar - \E v(\E)AR\gammaE },
\\
	 B\tilrE 
	&= 
	4^{-1} \BK{ u(\E) R \gammaE - v(\E) r},
\end{split}
\end{align}
where $A$ and $R$ are the $m$-dimensional square matrices
$A_{jk}:=\bbbone_\curBK{j\le k}$ and $R:=\bbbone_\curBK{j=k}r_k$,
and $r:=(r_1,\ldots,r_m)$.
Since $A^\E$ is invertible for all $\E$ small enough,
by multiplying the first equation of \eqref{eq:matching:eq:exact} 
by $(A^\E)^{-1}$ and substituting it into the second equation, 
we arrive at the following equivalent equations
\begin{align}\label{eq:matrix:eq}
\begin{split}
	&
	\BK{ R  + \E \frac{v(\E)}{u(\E)} B(A^\E)^{-1} A R }
	\gammaE 	
	=
	\BK{ \frac{v(\E)}{u(\E)} + B (A^\E)^{-1} A } r,
\\
	&
	\tilrE = 
	 4^{-1}
	\Big( u(\E) (A^\E)^{-1}A r - \E v(\E)(A^{\E})^{-1} A R \gammaE \Big),
\end{split}
\end{align}
Since $R$ has full rank, \eqref{eq:matrix:eq} has a unique solution $(\gamma(\E),\tilr(\E))$.
Moreover, since $(A^\E)^{-1}$, $u(\E)$, $v(\E)$ are $C^\infty$ in $\E$
for small enough $\E$ (even at $\E=0$), and $u(0)=2\lambda>0$,
$\gamma(\E)$ and $\tilr(\E)$ are also $C^\infty$ in $\E$.
By solving \eqref{eq:matrix:eq} at $\E=0$ (notice that $A^\E=\frac{\lambda}{2}A$ at $\E=0$), 
we obtain
\begin{align*}
	\notag
	\gamma_k(0) = \lambda \BK{ \frac{2}{r_k} \sum_{k'=k+1}^m \frac{k'-k}{k} r_{k'} +1 },
\quad
	\tilr_k(0) = r_k.
\end{align*}

By choosing $\tilr^*_k:=\frac{d\tilr_k}{d\E}(0)$,
for any given $\gammaE$ and $\tilrE$ of the form \eqref{eq:gamma:approx} and \eqref{eq:tilr:approx},
we have $\gammaE-\gamma(\E)=O(\E)$ and $\tilrE-\tilr(\E)=O(\E^\frac32)$.
Tracking the coefficients multiplying $\gammaE_k$ and $\tilrE_k$ in \eqref{eq:matching:eq}, 
we find that the difference of $O(\E)$ between $\gammaE$ and $\gamma(\E)$
and the difference of $O(\E^\frac32)$ between $\tilrE$ and $\tilr(\E)$
will only contribute $O(\E^2)$ to \eqref{eq:matching:eq}, concluding the proof.
\end{proof}

Next, we provide some estimates of the semi-discrete heat kernel $\bfp$, 
as defined in \eqref{eq:discr:heat:kernel}.
Solving \eqref{eq:discr:heat:kernel} by Fourier series,
we have 
\begin{align}\label{eq:p:Fourier:expression}
	\bfp_t(x) =
	\frac{1}{2\pi} \int_{-\pi}^\pi e^{-ix\theta} 
	\exp\sqBK{ -t \phi(\theta) } d\theta,
	\quad
	\text{where }
	\phi(\theta): = \sum_{k=1}^m (1-\cos(k\theta))\tilrE_k .
\end{align}
From \eqref{eq:tilr:approx} and \eqref{eq:r:constraint} we have
\begin{equation}\label{eq:gcd:consequence}
	0 < c_0 \leq \phi(\theta)\theta^{-2} \leq C< \infty,
	\text{ for all } \theta \in [-\pi,\pi]\setminus\curBK{0}.
\end{equation}

Indeed,
\begin{align*}
	\E^{-1} \bfp_{\E^{-2}T}(\E^{-1}X) =
	\frac{1}{2\pi} \int_{-\pi\E^{-1}}^{\pi\E^{-1}} 
	e^{-iX\theta} 
	\exp\sqBK{ -T \sum_{k=1}^m \tilrE_k\E^{-2}(1-\cos(\theta\E^{-1})) } d\theta.
\end{align*}
For each $T>0$, by \eqref{eq:gcd:consequence}, the integrand is bounded by $e^{-c_0T\theta^2}$.
Hence by the dominated converge theorem we have, for any $(T,X)\in(0,\infty)\times\bbR$,
\begin{align}\label{eq:p:conv:to:P}
	\lim_{\E\to 0} \E^{-1} \bfp_{\E^{-2}T}(\E^{-1}X) = P_{\alpha T}(X),
\end{align}
where $\alpha$ is defined as in \eqref{eq:alpha:defn}.

\begin{proposition}\label{prop:heat:ker}
Given any $b\ge 0$,
for any $|k|\le m$, $n\in\bbZ$, $v\in[0,1]$, $0\le t<t'<\infty$, and $x\in\bbZ$,
we have
\begin{align}
	\label{eq:p:esti:sup}
	&
	\bfp_{t}(x) \leq e^{C(t'-t)} \bfp_{t'}(x),
\\
	\label{eq:p:holder:time:esti}
	&
	|\bfp_{t'}(x) - \bfp_t(x)|
	\leq 
	 (1\wedge t^{-\frac12-v})(t'-t)^v C,
\\
	\label{eq:del:p:holder:time:esti}
	&
	|\nabla_k\bfp_{t'}(x) - \nabla_k\bfp_t(x)|
	\leq 
	 (1\wedge t^{-1-v})(t'-t)^v C,		
\\
	\label{eq:p:esti}
	&
	\bfp_t(x)
	\leq 
	 C(b) (1\wedge t^{-\frac12}) e^{-b|x|(1\wedge t^{-1/2})},
\\
	\label{eq:del:p:esti}
	&
	|\nabla_n\bfp_t(x)|
	\leq 
	C(b) (1\wedge t^{-\frac{1+v}{2}}) |n|^v e^{-b|x|(1\wedge t^{-1/2})},
\\
	\label{eq:del:p:holder:esti}
	&
	|\nabla_n\nabla_k\bfp_t(x)|
	\leq 
	 C(b) (1\wedge t^{-\frac{2+v}{2}}) |n|^v e^{-b|x|(1\wedge t^{-1/2})}.	
\end{align}
\end{proposition}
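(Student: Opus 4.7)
The plan is to base all six estimates on the Fourier representation \eqref{eq:p:Fourier:expression} together with the two-sided bound $c_0\theta^2 \le \phi(\theta) \le C\theta^2$ from \eqref{eq:gcd:consequence}. Three features will be used throughout: (i) $\phi$ is a trigonometric polynomial, hence entire in $\theta$ and periodic with period $2\pi$; (ii) for $\E$ small enough, $\tilrE_k > 0$ so $\bfp_t(\,\cdot\,)$ is a probability kernel on $\bbZ$ (nonnegative with total mass $1$); and (iii) $\phi(\theta)$ is bounded above on $[-\pi,\pi]$ by some $\phi_{\max} < \infty$.

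The semi-group comparison \eqref{eq:p:esti:sup} is the easiest: using $\bfp_{t'} = \bfp_{t'-t} * \bfp_t$ and dropping all summands but the one at $y=0$, I get $\bfp_{t'}(x) \ge \bfp_{t'-t}(0)\bfp_t(x)$, and the trivial lower bound $\bfp_s(0) \ge \frac{1}{2\pi}\int_{-\pi}^{\pi} e^{-s\phi_{\max}}\,d\theta = e^{-\phi_{\max} s}$ yields \eqref{eq:p:esti:sup} with $C = \phi_{\max}$.

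For the spatial decay estimates \eqref{eq:p:esti}, \eqref{eq:del:p:esti}, \eqref{eq:del:p:holder:esti} I will shift the contour in \eqref{eq:p:Fourier:expression} to $\theta \mapsto \theta + is$ with $s := -b\bigl(1\wedge t^{-1/2}\bigr)\operatorname{sgn}(x)$. This is legal because periodicity in $\theta$ cancels the boundary terms at $\theta = \pm\pi$, while $e^{-t\phi(\theta)}$ is entire. The shift produces the prefactor $e^{-b|x|(1\wedge t^{-1/2})}$, and the Taylor expansion $\Re\phi(\theta+is) = \phi(\theta) - \tfrac{s^2}{2}\phi''(\theta) + O(s^3) \ge c\theta^2 - Cs^2$, combined with $ts^2 \le b^2$, leaves a Gaussian weight $e^{-ct\theta^2}$ with constants depending only on $b$. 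Its integral is $C(b)(1\wedge t^{-1/2})$, which gives \eqref{eq:p:esti}. For the gradient versions I insert into the integrand the extra Fourier factors $e^{-in\theta}-1$ and/or $e^{-ik\theta}-1$, and use the interpolation $|e^{-in\theta}-1| \le 2^{1-v}|n|^v|\theta|^v$ (together with $|e^{-ik\theta}-1| \le C|\theta|$ since $|k|\le m$ is bounded). Each extra power of $|\theta|$ in the integrand improves the Gaussian integral by one factor of $t^{-1/2}$, yielding \eqref{eq:del:p:esti} and \eqref{eq:del:p:holder:esti}.

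For the time-Hölder bounds \eqref{eq:p:holder:time:esti} and \eqref{eq:del:p:holder:time:esti} I write
\[
\bfp_{t'}(x)-\bfp_t(x) = \frac{1}{2\pi}\int_{-\pi}^\pi e^{-ix\theta}\bigl(e^{-t'\phi(\theta)}-e^{-t\phi(\theta)}\bigr)d\theta,
\]
and use the elementary interpolation $|1-e^{-x}| \le x^v$ for $x\ge 0$, $v\in[0,1]$, to bound the integrand by $(t'-t)^v\phi(\theta)^v e^{-t\phi(\theta)} \le C(t'-t)^v|\theta|^{2v}e^{-ct\theta^2}$. Integrating gives $C(t'-t)^v$ in the regime $t\le 1$ (bounding $e^{-ct\theta^2}\le 1$) and $C(t'-t)^v t^{-v-1/2}$ in the regime $t\ge 1$ (rescaling $\theta\to t^{-1/2}\theta$), combined to $C(t'-t)^v(1\wedge t^{-1/2-v})$, which is \eqref{eq:p:holder:time:esti}. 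For \eqref{eq:del:p:holder:time:esti} I insert the additional factor $|e^{-ik\theta}-1|\le C|\theta|$, upgrading the decay to $(1\wedge t^{-1-v})$. The only delicate step in the whole argument is the contour shift of the third paragraph, where I must verify that $\Re\phi(\theta+is)$ is controlled uniformly on $[-\pi,\pi]$ for $|s|\le b$; this is routine given $\phi$'s explicit trigonometric form and the quadratic lower bound \eqref{eq:gcd:consequence}, but is the only place where analyticity (as opposed to just smoothness) of $\phi$ is actually used, and constants necessarily degrade like $e^{Cb^2}$ in $b$.
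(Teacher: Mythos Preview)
Your proposal follows essentially the same route as the paper: both proofs rest on the Fourier representation \eqref{eq:p:Fourier:expression}, the two-sided bound \eqref{eq:gcd:consequence}, and a contour shift (you shift $\theta\mapsto\theta+is$, the paper passes to $z=e^{i\theta}$ and deforms to $|z|=1+\delta$; these are the same move). Your probabilistic argument for \eqref{eq:p:esti:sup} via the semigroup identity and nonnegativity of $\bfp$ is clean and correct, whereas the paper simply cites \cite[(A.5)]{bertini97}. Your treatment of the time-H\"older bounds \eqref{eq:p:holder:time:esti}--\eqref{eq:del:p:holder:time:esti} matches the paper's almost verbatim.

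There is one genuine gap in your handling of \eqref{eq:del:p:esti} and \eqref{eq:del:p:holder:esti}. You say you ``insert the extra Fourier factor $e^{-in\theta}-1$'' and bound it by $C|n|^v|\theta|^v$, but after the contour shift the factor is $e^{-in(\theta+is)}-1=e^{ns}e^{-in\theta}-1$, and the interpolation bound $|e^{-in\theta}-1|\le C|n\theta|^v$ no longer applies directly. The paper deals with this by writing (in its $z$-variable notation) the shifted gradient symbol as a sum of two pieces,
\[
(1+\delta)^{-n}e^{-in\theta}-1 = e^{-in\theta}\bigl[(1+\delta)^{-n}-1\bigr] + \bigl[e^{-in\theta}-1\bigr],
\]
and bounds the first bracket by $C(b)\,|n(1\wedge t^{-1/2})|^{v}$ and the second by $C|n\theta|^{v}$; both pieces then feed into the Gaussian integral to produce the claimed $|n|^{v}(1\wedge t^{-(1+v)/2})$. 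Without this split your argument does not close: the factor $e^{ns}$ can be large when $n$ and $s$ have the same sign, so the naive bound fails. The fix is exactly the paper's two-term decomposition, so the missing step is short, but it is not optional.
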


\begin{proof}
The estimate \eqref{eq:p:esti:sup} is derived the same way 
as \cite[(A.5)]{bertini97}.

Next we prove prove \eqref{eq:p:holder:time:esti}.
Take the difference 
$\bfp_{t'}(x)-\bfp_t(x)$ using \eqref{eq:p:Fourier:expression},
and then use \eqref{eq:gcd:consequence} and 
the readily verified identity 
\begin{align}\label{eq:prop:heat:ker:readly:verified}
	1-e^{-a} \le 1\wedge a^v
\end{align}
(which holds for $a\geq 0$)
to bound the integrand by 
$
	C [1\wedge((t'-t)\theta^2)^u] e^{-c_0t\theta^2}.
$
This expression can be further bounded by 
\begin{align}\label{eq:prop:heat:ker:1}
	C (t'-t)^u(1\wedge\theta^{2u}) e^{-c_0t\theta^2},
\end{align}
because $|\theta|\leq \pi$.
To get \eqref{eq:p:holder:time:esti},
First use the bound $C(t'-t)^u$ in \eqref{eq:prop:heat:ker:1} and integrate over $\theta$,
and then use the bound $C(t'-t)^u\theta^{2u}e^{-c_0t\theta^2}$ in \eqref{eq:prop:heat:ker:1} and integrate over $\bbR$. 
Similarly, modifying \eqref{eq:p:Fourier:expression} 
by taking the discrete gradient,
we get \eqref{eq:del:p:holder:time:esti} through the same reasoning.

To prove \eqref{eq:p:esti}, \eqref{eq:del:p:esti}, 
and \eqref{eq:del:p:holder:esti}
we derive another integral expression of $\bfp_t$.
First by making the change of variable 
$z=e^{i\theta}$ in \eqref{eq:p:Fourier:expression},
we turn the integral over $\theta \in [-\pi,\pi]$ 
into a contour integral of $\frac{1}{2\pi i}  z^{-x-1} e^{-t\psi(z)}$
along $\curBK{|z|=1}\subset \bbC$,
where $\psi(z):= \sum_k \tilrE_k [1-(z+z^{-1})2^{-1}]$.
Since this integrand is holomorphic on $\bbC\setminus\{0\}$,
we can deform the original contour $\curBK{|z|=1}$
and integrate along $\curBK{|z|=1+\delta}$, where
\begin{align}\label{eq:prop:heat:ker:delta:defn}
	\delta := e^{b\sign(x)(1\wedge t^{-1/2})}-1.
\end{align}
Making another change of variable $(1+\delta)e^{-i\theta} =z$ 
in this new contour integral,
we obtain
\begin{align}\label{eq:p:Fourier:variant}
	\bfp_t(x)
&=
	\frac{1}{2\pi}
	\int_{-\pi}^\pi
	 (1+\delta)^{-x} e^{-ix\theta} 
	 \exp\sqBK{ -t \psi((1+\delta)e^{i\theta}) }
	 d\theta.
\end{align}
Next, by the definition of $\psi$ and $\phi$ we have
\begin{align}\label{eq:psi:esti}
	\psi((1+\delta)e^{i\theta}) 
= 
	\phi(\theta) +
	i\delta \sum_{k=1}^m \tilrE_k \sin(k\theta)
	+
	\frac{\delta^2}{2(1+\delta)}
	\sum_{k=1}^m \tilrE_k e^{-ik\theta}.
\end{align}
Combining \eqref{eq:gcd:consequence},\eqref{eq:p:Fourier:variant},
and \eqref{eq:psi:esti}, 
and integration over $[-\pi,\pi]$, 
we bound $\bfp_t(x)$ by 
$
	C (1+\delta)^{-x} (1\wedge t^{-\frac12}) e^{C\delta^2t}.
$
By \eqref{eq:prop:heat:ker:delta:defn} and \eqref{eq:prop:heat:ker:readly:verified}, 
we have $\delta^2 t \le C(b)$ and 
$
 (1+\delta)^{-x} = e^{-b|x|(1\wedge t^{-1/2})},
$
concluding \eqref{eq:p:esti}.

Next, we turn to \eqref{eq:del:p:esti}.
Modifying \eqref{eq:p:Fourier:variant} by taking the discrete gradient $\nabla_n$,
we get 
\begin{equation}\label{eq:p:Fourier:variant:1}
	\nabla_n\bfp_t(x)
=
	\frac{1}{2\pi} \int_{-\pi}^\pi S(\theta) 
	\exp\sqBK{ -t \BK{1-\psi((1+\delta)e^{i\theta}) } } d\theta,
\end{equation}
where $S(\theta): =(1+\delta)^{-(x+n)} e^{-i(x+n)\theta} - (1+\delta)^{-x} e^{-ix\theta}$.
Write $S(\theta)$ as the sum of 
$(1+\delta)^{-x} e^{-i(x+n)\theta} [(1+\delta)^{-n}-1]$ and 
$(1+\delta)^{-x} e^{-ix\theta} (e^{-i\theta n}-1)$.
By \eqref{eq:prop:heat:ker:delta:defn} and \eqref{eq:prop:heat:ker:readly:verified}, 
we have 
\begin{align*}
	&
	(1+\delta)^{-n}-1
	\leq C(b) |n(1\wedge t^{-1/2})|^v,
\\
	&
	|e^{i\theta n}-1| \le C |\theta n|^v,
\end{align*}
yielding, 
\begin{equation}\label{eq:prop:app:3}
	|S(\theta)| 
	\le C(b) |n|^v ((1\wedge t)^{-v/2} + |\theta|^v) 
	e^{-b|x|(1\wedge t^{-1/2})}.
\end{equation}
Combining this with \eqref{eq:p:Fourier:variant:1},
and \eqref{eq:psi:esti}, 
and integrating over $\theta$, we obtain \eqref{eq:del:p:esti}.

As for \eqref{eq:del:p:esti},
similar to \eqref{eq:del:p:esti}, we modify \eqref{eq:p:Fourier:variant} to get
\begin{equation}\label{eq:p:Fourier:variant:2}
	\nabla_n\nabla_k\bfp_t(x)
=
	\frac{1}{2\pi} \int_{-\pi}^\pi T(\theta) 
	\exp\sqBK{ -t \BK{1-\psi((1+\delta)e^{i\theta}) } } d\theta,
\end{equation}
where $T(\theta) := S(\theta)\tilS(\theta)$,
and $\tilS(\theta):=(1+\delta)^{-k}e^{-ik\theta}-1$.
Write $\tilS(\theta)$ as the sum of 
$[(1+\delta)^{-k}-1]e^{-ik\theta}$ and $(e^{-i\theta k}-1)$.
Since $|k|\le m$, by \eqref{eq:prop:heat:ker:delta:defn} and \eqref{eq:prop:heat:ker:readly:verified},
we have
\begin{align*}
	&
	(1+\delta)^{-k}-1
	\leq C(b)(1\wedge t^{-1/2}),
\\
	&
	|e^{i\theta k}-1| \le C |\theta|.
\end{align*}
Therefore, $|\tilS(\theta)| \le C(b)((1\wedge t^{-1/2}) + |\theta|)$.
Combining this inequality with \eqref{eq:prop:app:3}, 
\eqref{eq:p:Fourier:variant:2}, and \eqref{eq:psi:esti}
and integrating over $\theta$, we obtain \eqref{eq:del:p:holder:esti}.
\end{proof}

Proposition \ref{prop:heat:ker} immediately implies the following corollary
\begin{corollary}
Given any $b\ge 0$,
for any $|k|\le m$, $v\in[0,1]$, $t\in[0,\tilT\E^{-2}]$, $x\in\bbZ$, and $j\in\bbN$,
we have
\begin{align}
	\label{eq:p:esti:sum}
	&
	\sum_x \bfp_t(x) e^{b\E|x|}
	\leq 
	 C(b),
\\
	\label{eq:del:p:esti:sum}
	&
	\sum_x |\nabla_n\bfp_t(x)|
	\leq 
	C(b) t^{-\frac{v}{2}} |n|^v,
\\
	\label{eq:del:del:p:esti:sum}
	&
	\sum_x |\nabla_n\nabla_k\bfp_t(x)|
	\leq 
	C(b) t^{-\frac{1+v}{2}} |n|^v,
\\
	\label{eq:p:esti:Linfty}
	&
	\sup_x \bfp_t(x) e^{b\E|x|}
	\leq 
	 t^{-\frac12} C(b),
\\
	\label{eq:del:p:esti:Linfty}
	&
	\sup_x |\nabla_n\bfp_t(x)| e^{b\E|x|}
	\leq 
	C(b) (1\wedge t^{-\frac{1+v}{2}}) |n|^v.	
\end{align}
\end{corollary}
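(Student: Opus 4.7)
The plan is to deduce each of the five estimates directly from the pointwise bounds in Proposition \ref{prop:heat:ker}, combined with an elementary summation in $x$. The crucial observation is that the standing assumption $t \in [0, \tilT\E^{-2}]$ forces $\E\sqrt{t} \le \sqrt{\tilT}$, so the weight $e^{b\E|x|}$ can be absorbed into the exponential factor $e^{-b'|x|(1\wedge t^{-1/2})}$ coming from Proposition \ref{prop:heat:ker}, at the cost of enlarging the constant: for $t\ge 1$ one has $b\E|x|\le b\sqrt{\tilT}\,|x|t^{-1/2}$ and for $t\le 1$ one has $b\E|x|\le b|x|$, so choosing $b' > b\max(1,\sqrt{\tilT})$ in the pointwise bound leaves a residual Gaussian-like decay $e^{-(b'-b\sqrt{\tilT})|x|(1\wedge t^{-1/2})}$.

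For the sup estimates \eqref{eq:p:esti:Linfty} and \eqref{eq:del:p:esti:Linfty}, I would apply \eqref{eq:p:esti} and \eqref{eq:del:p:esti} with the enlarged constant $b'$, then bound the residual exponential by $1$. This is immediate.

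For the sum estimates \eqref{eq:p:esti:sum}--\eqref{eq:del:del:p:esti:sum}, after the same absorption step (needed only for \eqref{eq:p:esti:sum}) the task reduces to evaluating sums of the shape
\begin{align*}
    \sum_{x\in\bbZ} (1 \wedge t^{-\alpha/2})\, e^{-b''|x|(1\wedge t^{-1/2})},
\end{align*}
with $\alpha\in\{1,1+v,2+v\}$ and $b''>0$. I would split into the two regimes. For $t\le 1$ the inner factor is $e^{-b''|x|}$ and the sum is a convergent geometric series bounded by a constant, so the result is controlled by $(1\wedge t^{-\alpha/2})\le t^{-\alpha/2}$ (harmless if one reads the bound as $C t^{-\alpha/2}$ with $t\ge 1$, or as a constant for $t\le 1$, which in the case $\alpha=1$ gives the $\E$-independent constant in \eqref{eq:p:esti:sum} directly). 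For $t\ge 1$ the sum is a Riemann sum for $t^{-\alpha/2}\int_\bbR e^{-b''|u|/\sqrt{t}}\,du = C t^{(1-\alpha)/2}$, giving precisely the exponents $0$, $-v/2$, $-(1+v)/2$ demanded by the three inequalities. The factor $|n|^v$ (for the gradient estimates) is uniform in $x$ and passes through the sum unchanged.

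I do not anticipate any serious obstacle: the entire argument is a bookkeeping exercise once the absorption $e^{b\E|x|}\le C\cdot e^{b'|x|(1\wedge t^{-1/2})/2}$ is noted, and the matching of powers of $t$ is transparent from \eqref{eq:p:esti}, \eqref{eq:del:p:esti}, \eqref{eq:del:p:holder:esti}.
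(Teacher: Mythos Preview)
Your proposal is correct and is precisely the argument the paper has in mind: the paper gives no explicit proof, stating only that the corollary is an immediate consequence of Proposition~\ref{prop:heat:ker}, and your absorption step $e^{b\E|x|}\le e^{b'\,|x|(1\wedge t^{-1/2})}$ (using $t\le\tilT\E^{-2}$) together with the elementary geometric/Riemann-sum evaluation is exactly how one makes that implication explicit.
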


\bibliographystyle{abbrv}
\bibliography{WAnSEP}

\end{document}